\newcommand{\PP}{{\mathbb{P}}}
\newcommand{\QQ}{{\mathbb{Q}}}
\newcommand{\RR}{{\mathbb{R}}}
\newcommand{\SSS}{{\mathbb{S}}}
\newcommand{\NN}{{\mathbb{N}}}
\newcommand{\MM}{{\mathbb{M}}}
\newcommand{\Card}{\ensuremath{\text{Card}}}
\DeclareMathOperator{\len}{lh}
\DeclareMathOperator{\Force}{Force}
\DeclareMathOperator{\dom}{dom}
\DeclareMathOperator{\Col}{Col}
\DeclareMathOperator{\supp}{supp}
\DeclareMathOperator{\Suc}{Suc}
\DeclareMathOperator{\Lev}{Lev}
\DeclareMathOperator{\range}{range}
\def\k{\kappa}
\def\l{\lambda}
\def\a{\alpha}
\def\d{\delta}
\def\b{\beta}
\newtheorem{theorem}{Theorem}[section]
\newtheorem{lemma}[theorem]{Lemma}
\newtheorem{corollary}[theorem]{Corollary}
\newtheorem{notation}[theorem]{Notation}
\newtheorem{definition}[theorem]{Definition}
\newtheorem{remark}[theorem]{Remark}
\newtheorem{claim}[theorem]{Claim}
\numberwithin{equation}{section}
\def\l{\lambda}
\def\rmark{\mbox{$\rm\bf\rule{0.06em}{1.45ex}\kern-0.05em R$}}
\def\pmark{\mbox{$\rm\bf\rule{0.06em}{1.45ex}\kern-0.05em P$}}
\def\nmark{\mbox{$\rm\bf\rule{0.06em}{1.45ex}\kern-0.05em N$}}
\newcommand{\lusim}[1]{\smash{\underset{\raisebox{1.2pt}[0cm][0cm]{$\sim$}}
{{#1}}}}
\begin{document}

\begin{center}

\end{center}
\begin{center}

\end{center}

\begin{center}

\end{center}
\begin{center}

\end{center}
\begin{center}

\end{center}
\begin{center}

\end{center}

\begin{center}

\end{center}
\begin{center}
\LARGE{ \textbf{AN INTRODUCTION TO FORCING}}
\end{center}
\begin{center}

\end{center}
\begin{center}
\LARGE{\textrm{Mohammad Golshani}}\footnote{School of Mathematics, Institute for Research in Fundamental Sciences (IPM), P.O. Box:
19395-5746, Tehran-Iran.

The author thanks Rahman Mohammadpour for his help in typing the notes.

None of the results or proofs presented here are due to the author; however no references are given.

E-mail address: golshani.m@gmail.com}
\end{center}

\newpage
\tableofcontents
\chapter{How to use forcing}
The aim of these lectures is to give a short introduction to forcing. We will avoid metamathematical issues as much as possible and similarly we will avoid performing the actual construction of forcing. We assume familiarity with basic predicate logic, the axioms of $ZFC$ set theory and constructible sets. We will also make use of tools like the coding of Borel sets and the Shoenfield absoluteness result.

\section{Inner models and generic sets}
We will use naive set theory as a setting. In this framework, we can prove results about consistency by looking at models of set theory.
\begin{definition}
An inner model of $ZF$  is a class $M$ such that:
\begin{enumerate}
\item $M$ is a class of $V$, that is the axioms of $ZF$ are still valid (in $V$) if one applies replacement to formulas including one unary predicate $U$ interpreted by $M$,
\item $M$ is transitive,
\item $M$ contains all ordinals,
\item $M$ is a model of $ZF$.
\end{enumerate}
\end{definition}
Similarly we can define when $M$ is an inner model of $ZFC$.
\begin{definition}
$(a)$ A forcing notion is a partially ordered set $\PP$ which has the largest element $1_{\PP}$; elements of $\mathbb{P}$ are called conditions.

$(b)$ Given $p,q\in \PP,$ $p$ is an extension of $q$ if $p\leq q.$

$(c)$ A subset $G$ of $\mathbb{P}$ is called $\mathbb{P}$-generic over $V$, if the following hold:
\begin{enumerate}
\item $p\leq q$ and $p\in G \Rightarrow q\in G,$
\item $p, q\in G \Rightarrow p, q$ are compatible (i.e. have a common extension),
\item If $D$ is a dense set belonging in $V$, then $D\cap G\neq \emptyset,$ where dense means $\forall p \exists q\leq p, q\in D.$
\end{enumerate}
\end{definition}
It is easily seen that if $G$ is $\mathbb{P}$-generic over $V$, and if $p,q\in G,$ then they have a common extension in $G$.
\begin{theorem}
If $M$ is a countable transitive model and $\mathbb{P}$ a partially ordered set of $M$, then given any condition in $\mathbb{P},$ there is a $\mathbb{P}$-generic set over $M$ including $p$ as an element.
\end{theorem}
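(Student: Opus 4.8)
The plan is to exploit the countability of $M$ to list all the dense subsets of $\PP$ that lie in $M$, then thread a single decreasing sequence of conditions through every one of them; the upward closure of this sequence will be the sought-after generic filter.

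First I would observe that, since $M$ is countable and transitive, the collection $\set{D \in M \st D \text{ is dense in } \PP}$ is itself a countable set, being a subset of $M$. Hence, working in $V$, I may fix an enumeration $\ordered{D_n \st n < \omega}$ of it. It is worth stressing that this enumeration is carried out in $V$ and need not be available inside $M$; this is exactly why the generic set produced will typically fail to belong to $M$.

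Next I would construct the sequence recursively in $V$. Set $p_0 = p$, the given condition. Given $p_n$, the density of $D_n$ guarantees the existence of some $p_{n+1} \le p_n$ with $p_{n+1} \in D_n$, and I appeal to (dependent) choice in $V$ to make these selections coherently. This yields a descending chain $p = p_0 \ge p_1 \ge p_2 \ge \cdots$ with $p_{n+1} \in D_n$ for every $n$. I would then define $G = \setof{q \in \PP}{\exists n\,(p_n \le q)}$ and check the three clauses of genericity: upward closure is immediate; for compatibility, if $p_{n_1} \le q_1$ and $p_{n_2} \le q_2$ then $p_k$ with $k = \max(n_1,n_2)$ extends both $q_1$ and $q_2$; and each $D_n$ is met since $p_{n+1} \in D_n \cap G$. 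Finally $p_0 = p \in G$, so $G$ contains $p$ as required.

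The step deserving the most care — and the genuine heart of the argument — is the enumeration in the second paragraph: the whole construction succeeds only because there are just countably many dense sets to accommodate, so that a single sequence of order type $\omega$ can diagonalize against all of them at once. This is precisely where the hypothesis that $M$ is countable is indispensable, and the argument would collapse for an uncountable model, where no such $\omega$-sequence could meet every dense set.
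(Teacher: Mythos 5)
Your proof is correct and follows essentially the same route as the paper's: enumerate the countably many dense sets of $\PP$ lying in $M$, build a decreasing $\omega$-sequence starting at $p$ that enters each $D_n$, and take the upward closure as $G$. Your version simply spells out the verification of the three genericity clauses, which the paper leaves implicit.
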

\begin{proof}
Enumerate the dense sets of $\mathbb{P}$ in $M$ as a sequence $(D_n: n<\omega).$ Pick a decreasing sequence $(p_n: n<\omega)$ of elements of $\PP$ such that:
\begin{itemize}
\item $p_0=p,$
\item $p_{n+1}\leq p_n$,
\item $p_{n+1}\in D_n.$
\end{itemize}
Then $G=\{p\in \PP: \exists n, p_n\leq p \}$ is as required.
\end{proof}
The countability of the model is only used in the proof of the above theorem; so from now on we work in $V$, and force over it.

Fix a forcing notion $\PP$. We will use so called formulas with parameter $\PP,$ to mean a formula of an extended language including a constant symbol interpreted by $\PP.$
\\
{\bf  Construction of the model:} For any $\PP$-generic $G$ over $V$, there is a model $V[G]$ such that:
\begin{itemize}
\item $V$ is an inner model of $V[G]$,
\item There is an onto map $K_G$ from $V$ onto $V[G]$ defined in $V[G]$ with parameter $G$ (provided a unary predicate symbol is allowed with interpretation $V$).
\end{itemize}
An element $a$ such that $K_G(a)=u$ is called a name for $u$.
\\
{\bf Truth in the model:}
\begin{itemize}
\item For any formula $\phi(v_1, ..., v_n),$ there is a formula $\Force_{\phi}(v_0, ..., v_n)$ with parameter $\PP$ such that
\begin{center}
$V\models$``$\Force_{\phi}(p, a_1, ..., a_n)$''
\end{center}
iff for every generic set $G$ containing $p$,
\begin{center}
$V[G]\models$``$\phi(K_G(a_1)..., K_G(a_n))$''.
\end{center}
$\Force_{\phi}(p, a_1..., a_n)$ is often written
\begin{center}
$p\Vdash \phi(a_1, ..., a_n).$
\end{center}
Also we have
\begin{center}
$V[G]\models$``$\phi(K_G(a_1)..., K_G(a_n))$'',
\end{center}
iff
\begin{center}
$\exists p\in G, p\Vdash \phi(a_1, ..., a_n).$
\end{center}
\end{itemize}
Thus there is, in $V$, a forced approximation of the truth of $V[G].$
\\
{\bf  Names of elements of $V[G]$:} Recall that a name for $u$ is an element $a\in V$ such that $K_G(a)=u$.
\begin{itemize}
\item There is an object $\Gamma$ such that $K_G(\Gamma)=G$ (a canonical name for $G$).
\item There is a functional relation defined in $V, a \mapsto \check{a}$ such that $K_G(\check{a})=a.$
\end{itemize}
Most of the applications of forcing can be done without knowing more about generic models and the forcing relation.
\begin{theorem}
$V[G]$ is the smallest model containing all members of $V$ and $G$ as an element, and such that $V$ is an inner model.
\end{theorem}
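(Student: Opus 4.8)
The plan is to exploit the surjection $K_G\colon V \to V[G]$ supplied by the construction. Since $K_G$ is onto, every element of $V[G]$ has the form $K_G(a)$ for some name $a \in V$, so $V[G] = \{ K_G(a) : a \in V \}$. Minimality will then follow once I show that any rival model is forced to contain all of these values.

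First I would verify that $V[G]$ is itself a model of the required kind. That $V \subseteq V[G]$ follows from the functional relation $a \mapsto \check a$ with $K_G(\check a) = a$; that $G \in V[G]$ follows from the canonical name $\Gamma$ with $K_G(\Gamma) = G$; and that $V$ is an inner model of $V[G]$ is part of the construction. Hence the family of eligible models is nonempty and contains $V[G]$, and it remains to show that it is the $\subseteq$-least member.

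For minimality, let $N$ be any transitive model of $ZF$ containing all ordinals, with $V \subseteq N$, $G \in N$, and $V$ an inner model of $N$. The crucial feature is that $K_G$ is given by a fixed formula with parameter $G$ whose only reference to $V$ is through the predicate interpreting it. Since $N$ contains $G$ and has $V$ as an inner model, this formula can be evaluated inside $N$; moreover $K_G(a)$ is computed by recursion along the (absolute) $\in$-rank of the name $a$. Because $N$ and $V[G]$ are both transitive models of enough of $ZF$ and agree on $V$ and on $G$, this recursion is absolute between them, so $(K_G(a))^N = (K_G(a))^{V[G]}$ for every $a \in V$. As all names lie in $V \subseteq N$, each value $K_G(a)$ therefore lies in $N$, and since these values exhaust $V[G]$ we conclude $V[G] \subseteq N$.

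The main obstacle is precisely this absoluteness step: one must confirm that the recursive scheme defining the name-evaluation map returns the same object when run in $N$ as when run in $V[G]$. This is exactly where the transitivity of both models, their agreement on the parameters $V$ and $G$, and the fact that the recursion invokes nothing beyond $V$, $G$, and the $\in$-relation all come into play. Since the excerpt only asserts the definability of $K_G$ rather than displaying it, I would take that stated definability as the license for the absoluteness argument; writing out the explicit form of $K_G$ and checking its absoluteness rank-by-rank is the one genuinely technical point.
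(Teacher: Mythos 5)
You should know at the outset that the paper contains no proof of this theorem to compare yours against: consistent with its announced policy of avoiding the actual construction of forcing, it states the theorem bare, treating the existence of $V[G]$ and of the surjection $K_G$ as a black box. Your proposal is the standard textbook argument, and its architecture is correct: $V[G]=\{K_G(a):a\in V\}$ since $K_G$ is onto; any eligible $N$ contains all names (because $V\subseteq N$) and contains $G$; and upward absoluteness of name-evaluation then gives $K_G(a)\in N$ for every $a$, hence $V[G]\subseteq N$. The verification that $V[G]$ is itself eligible, via $\check a$ and $\Gamma$, is also right.

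The point to press on is the step you yourself flagged, because it is more than a technicality: from the interface the paper actually states --- $K_G$ is \emph{some} surjection definable in $V[G]$ from the parameter $G$ and a predicate for $V$ --- absoluteness does not follow, and indeed the theorem is not derivable from those stated properties alone. A formula with parameters $G$ and $V$ need not define the same object when evaluated in a larger transitive model $N$ (compare ``$x$ is the set of all reals'', which is definable but not upward absolute), so ``taking the stated definability as the license'' is not a valid move; one must open the box. The repair is exactly what you sketch in your closing sentence: with the explicit construction in which names are sets of pairs and $K_G(a)=\{K_G(b): \exists p\in G,\ (b,p)\in a\}$, the evaluation map is given by transfinite $\in$-recursion whose recursion clause is $\Delta_0$ in the parameters, and such recursions are absolute between transitive models of $ZF$ containing the parameters: $N$, being such a model with $V\subseteq N$ and $G\in N$, runs the same recursion, and well-foundedness of $\in$ forces the unique solutions to agree. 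With that substitution --- the concrete recursive definition in place of the paper's black-box definability assertion --- your argument is complete, and it is presumably the proof the author intends the reader to supply.
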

\begin{notation} Let $V[G]$ be a generic extension of $V$.

$(a)$ For $a\in V[G],$ we use $\lusim{a}\in V$ as a name for $a$ (so that $K_G(\lusim{a})=a).$

$(b)$ If $a\in V,$ we use $a$ itself, instead of $\check{a},$ as a name for $a$.
\end{notation}

\section{Properties of the forcing relation and the generic extension}
In this section we give some consequences of the forcing relation and the model $V[G]$.
\begin{lemma}
If $p\Vdash \phi$ and $q\leq p,$ then $q\Vdash \phi.$
\end{lemma}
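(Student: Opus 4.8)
The plan is to work directly from the semantic characterization of the forcing relation recalled above, namely that $p \Vdash \phi$ holds exactly when $V[G] \models \phi$ for every $\PP$-generic set $G$ over $V$ with $p \in G$. Under this reading the lemma becomes an almost immediate consequence of the upward closure of generic filters, so the whole argument is a short quantifier manipulation. In particular this route sidesteps any induction on the complexity of $\phi$ that a proof from the internal definition of $\Force_\phi$ would require; the statement is uniform in $\phi$ precisely because it is phrased through the generic extensions.

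First I would fix an arbitrary $\PP$-generic set $G$ over $V$ with $q \in G$, and aim to show $V[G] \models \phi$. Since $G$ will be arbitrary among generics containing $q$, establishing this gives $q \Vdash \phi$ by the characterization. The key step is to observe that such a $G$ must also contain $p$. We are given $q \leq p$, and clause (1) in the definition of a generic set asserts upward closure, ``$p' \leq q'$ and $p' \in G \Rightarrow q' \in G$''. Applying this with $p' = q$ and $q' = p$, the membership $q \in G$ together with $q \leq p$ yields $p \in G$.

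Having placed $p$ in $G$, I would then invoke the hypothesis $p \Vdash \phi$. By the characterization this says $V[G'] \models \phi$ for every generic $G'$ containing $p$, and our $G$ is such a $G'$, so $V[G] \models \phi$ as desired. Discharging the arbitrary choice of $G$ then gives $q \Vdash \phi$ and completes the argument.

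There is essentially no serious obstacle here; the content is entirely in lining up two order conventions correctly. The one point demanding care is the direction of the ordering: because ``$q \leq p$'' means $q$ is an \emph{extension} of $p$ while generic filters are closed \emph{upward}, one must verify that an extension of $p$ forces at least as much as $p$, not less. Matching these orientations — extension downward, filter closure upward — is the single place a sign error could creep in, and it is exactly what the upward-closure clause supplies.
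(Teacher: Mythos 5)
Your proof is correct and is precisely the argument the paper intends: the lemma is stated there without proof because, under the semantic characterization of $\Vdash$ given earlier (namely $p \Vdash \phi$ iff $V[G]\models\phi$ for every generic $G$ containing $p$), it reduces immediately to the upward-closure clause (1) in the definition of a generic set, exactly as you argue. Your attention to the orientation of the order ($q\leq p$ meaning $q$ extends $p$, while filters are closed upward) is the one point of the argument that carries content, and you handle it correctly.
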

\begin{lemma}
$(a)$ $p\nVdash\phi$ iff $\exists q\leq p, q\Vdash \neg \phi.$

$(b)$  $p\Vdash \neg\phi$ iff $\forall q\leq p, q\nVdash \phi.$

$(c)$ $p\Vdash \forall x \phi(x)$ iff $p\Vdash \phi(a)$ for any $a$ in $V$.

$(d)$ $p\Vdash \exists x \phi(x)$ implies $\exists q\leq p, \exists t, q\Vdash \phi(t)$
\end{lemma}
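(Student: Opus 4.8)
The plan is to reduce each equivalence to the semantic characterisation of forcing recorded above, namely that $p \Vdash \phi$ holds exactly when $V[G] \models \phi$ for every generic $G$ with $p \in G$, together with its companion fact that $V[G] \models \phi$ iff some $p \in G$ satisfies $p \Vdash \phi$. The only auxiliary inputs are the monotonicity statement of the preceding lemma (if $p \Vdash \phi$ and $q \le p$ then $q \Vdash \phi$), the remark that any two elements of a generic filter have a common extension lying in that filter, and --- for the quantifier clauses --- the surjectivity of the map $K_G \colon V \to V[G]$. Throughout I would use freely that, given any condition, there is a generic filter containing it.

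For (a), in the direction from left to right I would take a generic $G \ni p$ with $V[G] \not\models \phi$, i.e.\ $V[G] \models \neg\phi$; the witness fact produces $q \in G$ with $q \Vdash \neg\phi$, and replacing $q$ by a common extension $r \in G$ of $p$ and $q$ gives $r \le p$ with $r \Vdash \neg\phi$ by monotonicity. Conversely, from $q \le p$ with $q \Vdash \neg\phi$ I would pick a generic $G \ni q$; upward closure puts $p \in G$, while $q \Vdash \neg\phi$ forces $V[G] \models \neg\phi$, so $p$ cannot force $\phi$. Clause (b) is dual: left to right uses monotonicity to pass $\neg\phi$ down to $q \le p$ and then a generic through $q$ to rule out $q \Vdash \phi$; right to left assumes $V[G] \models \phi$ for some generic $G \ni p$, extracts a forcing condition $r \in G$, and takes a common extension of $p$ and $r$ to contradict the hypothesis that no $q \le p$ forces $\phi$.

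For (c) the essential point is the surjectivity of $K_G$: from $p \Vdash \forall x\,\phi(x)$ and any name $a$, each generic $G \ni p$ satisfies $V[G] \models \phi(K_G(a))$, giving $p \Vdash \phi(a)$; conversely, if $p \Vdash \phi(a)$ for every $a \in V$, then in any generic $G \ni p$ each element of $V[G]$ has the form $K_G(a)$, so $V[G] \models \phi(K_G(a))$ for all of them and hence $V[G] \models \forall x\,\phi(x)$. Clause (d) runs the same extraction once: a generic $G \ni p$ yields a witness $u = K_G(t) \in V[G]$ with $V[G] \models \phi(t)$, the witness fact gives $q \in G$ forcing $\phi(t)$, and a common extension of $p$ and $q$ delivers the required condition below $p$. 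I would stress that (d) is only an implication because the witnessing name $t$ depends on the chosen generic, so no single $t$ need work uniformly from $p$.

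The main obstacle is not the logical bookkeeping but the licence to speak of generic filters at all, since a $\PP$-generic over $V$ need not exist inside $V$. In these notes the forcing relation has been introduced through exactly this semantic clause, so I would simply invoke it as given; a fully rigorous version would either replace ``for every generic $G \ni p$'' by its definable surrogate $\Force_\phi(p,\dots)$ and argue syntactically, or pass to a countable transitive model, where the existence theorem for generics supplies the needed filters, and transfer the conclusion back by absoluteness.
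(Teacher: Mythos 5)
Your proposal is correct and takes essentially the same approach as the paper: each clause is reduced to the semantic characterization of $\Vdash$ (truth in all generic extensions through the condition, plus the existence of a forcing condition in $G$ for any truth of $V[G]$), using monotonicity, the existence of common extensions inside a generic filter, and the surjectivity of $K_G$ for the quantifier clauses. The only cosmetic deviations are that you prove $(b)$ directly where the paper derives it from $(a)$, and you argue the forward direction of $(c)$ directly rather than by contradiction; your closing caveat about the existence of generic filters is consistent with the paper's declared naive framework and does not affect the argument.
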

\begin{proof}
$(a)$ Some model $V[G]$ with $p\in G$ satisfies $\neg \phi;$ hence assume $q\in G$ such that $q\Vdash \neg \phi.$ An extension $r$ of $p,q$ is smaller than $p$ and forces $\neg \phi.$

For the converse, pick a generic $G$ containing $q$ with $q\Vdash \neg \phi;$ then in the model $V[G], \neg \phi$ holds, hence $p$ can not force $\phi.$

$(b)$ follows from $(a)$,

$(c)$ If $p \nVdash \phi(a),$ some extension $q$ of $p$ forces $\neg \phi(a),$ by picking some generic $G$ with $q\in G,$ one comes to a contradiction.

For the converse, given $G$ with $p\in G,$ we get for any $a, V[G]\models \phi(K_G(a)),$ therefore $p\Vdash \forall x \phi(x)$.

$(d)$ Let $G$ be generic with $p\in G$. Then $V[G]\models$``$\exists x \phi(x)$'', thus for some $t, V[G]\models$``$\phi(K_G(t))$''. Pick $q\in G$ such that $q\Vdash \phi(t)$. Then any $r$ extending both of $p,q$  forces $\phi(t).$
\end{proof}
\begin{theorem}
If $V$ satisfies $AC$, then so does $V[G].$
\end{theorem}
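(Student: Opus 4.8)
The plan is to prove the well-ordering theorem inside $V[G]$, which over $ZF$ is equivalent to $AC$; that is, I would show that every set $x\in V[G]$ admits a well-ordering lying in $V[G]$. Equivalently, it suffices to produce, for each such $x$, an ordinal $\lambda$ and a surjection $h\func\lambda\to x$ with $h\in V[G]$: given such an $h$, the ordering that compares two elements of $x$ according to the least ordinal mapping to each is a well-ordering of $x$ in $V[G]$. So the whole problem reduces to exhibiting, in $V[G]$, a surjection from a ground-model set onto a superset of $x$.

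First I would fix $x\in V[G]$ and a name $a\in V$ for it, so that $K_G(a)=x$. The key point is that the ground model supplies a \emph{set} $A\in V$ of candidate names with the property that every element of $x$ is of the form $K_G(\gs)$ for some $\gs\in A$: the components entering into the single object $a$ form such a set, since $a\in V$ and $V\satisfies ZF$. Thus $x\sse K_G[A]$, where $K_G[A]=\setof{K_G(\gs)}{\gs\in A}\in V[G]$.

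Next I would invoke $AC$ in $V$. Since $A\in V$ and $V\satisfies AC$, there are an ordinal $\lambda$ and a surjection $g\func\lambda\to A$ with $g\in V$. As $V$ is an inner model of $V[G]$, we have $g\in V[G]$ as well, so the composite $K_G\circ g\func\lambda\to K_G[A]$ is a surjection in $V[G]$ whose range includes $x$. Passing to the subset $\setof{\gx\upto\lambda}{K_G(g(\gx))\in x}$ of $\lambda$ and reindexing by its order type (equivalently, well-ordering $K_G[A]$ and restricting the induced order to $x$) yields a well-ordering of $x$ in $V[G]$, as required.

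The one step that needs care, and the main obstacle, is the middle one: one must not attempt to choose names for the elements of $x$ one at a time, since that would already presuppose $AC$ in $V[G]$, which is exactly what is being proved. The resolution is that the set $A$ of candidate names is handed to us uniformly by the single ground-model object $a$, so no choice is exercised in $V[G]$. A secondary point to check is that a ground-model well-ordering remains a well-ordering in $V[G]$: this is immediate, because in $V$ the order is isomorphic to an ordinal, that isomorphism persists to $V[G]$, and ordinals are absolutely well-ordered, so no new infinite descending sequence can appear in the extension.
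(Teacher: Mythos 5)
Your proof is correct in the standard concrete development of forcing, and its overall shape---surject an ordinal of $V$ onto a set of names and push forward along $K_G$---is the same as the paper's; the two arguments diverge only in how the set $A$ of candidate names is produced, and the divergence is worth noting relative to this paper. You extract $A$ from the internal structure of the single name $a$ (its ``components'', e.g.\ $\dom(a)$ or the transitive closure of $a$ in the usual $\PP$-name construction, where every element of $K_G(a)$ is $K_G(\gs)$ for some $\gs$ in $\dom(a)$). That step is fine in a Kunen-style treatment, but this paper deliberately never constructs names: its interface provides only that $K_G$ is an onto map from $V$ to $V[G]$, and from that interface alone nothing connects the set-theoretic members of the object $a$ with names for the members of $K_G(a)$, so your middle step is not available inside the paper's axiomatic framework. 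The paper obtains $A$ abstractly instead: for each $y\in x$ it takes the least ordinal $\xi_y$ such that $\exists b\in V_{\xi_y}\,(y=K_G(b))$, applies replacement \emph{in $V[G]$} (legitimate, since $V[G]\satisfies ZF$ and the defining formula may use the predicate $V$ and the parameter $G$) to bound all the $\xi_y$ by a single $\xi$, and uses $A=V_\xi\cap V$. So the paper pays with one application of replacement in the extension but needs only surjectivity of $K_G$, while your version buys a more explicit and uniform $A$ at the price of presupposing the concrete name machinery. Your two closing observations---that no choices may be made in $V[G]$ when assigning names, and that ground-model well-orderings persist by absoluteness of ordinals---are both sound, and the paper's rank-minimization trick is precisely its way of honoring the first of them.
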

\begin{proof}
We will well order a set $x$ of $V[G].$ Now every element of $x$ has a name:
\begin{center}
$\forall y\in x \exists b, y=K_G(b).$
\end{center}
This is a statement in $V[G].$ Given $y$, we can consider the first ordinal $\xi$ such that
\begin{center}
$\exists b\in V_\xi, y=K_G(b).$
\end{center}
By replacement we bound the search for the names. Now $K_G$ is an onto map from a well-ordered set onto a set that contains $x$; hence $x$ is well-orderable.
\end{proof}
\begin{definition}
$\PP$ satisfies the $\k$-c.c. if all antichains of $\PP$ have size $<\k,$ where an antichain $A$ is a subset of $\PP$ consisting of pairwise incompatible elements.
\end{definition}
\begin{theorem}
(Assume $V$ satisfies $AC$) If $\PP$ satisfies the $\k$-c.c. where $\k$ is regular, then forcing with $\PP$ preserves all cardinals $\geq \k.$
\end{theorem}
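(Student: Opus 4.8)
The plan is to reduce the preservation of cardinals to a single counting (``covering'') lemma about functions appearing in $V[G]$, and then to dispatch the critical case $\l=\k$ using the regularity of $\k$.

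First I would isolate the key lemma: \emph{if $A,B\in V$ and $f\func A\to B$ is any function lying in $V[G]$, then there is $F\in V$ with domain $A$ such that $f(a)\in F(a)\sse B$ and $\power{F(a)}<\k$ for every $a\in A$.} To prove it, fix a name $\lusim{f}$ and a condition $p\in G$ forcing that $\lusim{f}$ is a function from $A$ into $B$, and define in $V$
\[
F(a)=\setof{b\in B}{\exists\,q\leq p,\ q\Vdash \lusim{f}(a)=b}.
\]
The heart of the argument is that $\power{F(a)}<\k$: for each $b\in F(a)$ choose a witness $q_b\leq p$ with $q_b\Vdash\lusim{f}(a)=b$; if $b\neq b'$ then $q_b$ and $q_{b'}$ are incompatible, since any common extension would force $\lusim{f}(a)$ to take two distinct values, contradicting that $\lusim{f}$ names a function. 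Hence $\setof{q_b}{b\in F(a)}$ is an antichain, and by the $\k$-c.c.\ it has size $<\k$. Finally $f(a)\in F(a)$: by the truth property some condition $q'\in G$ forces $\lusim{f}(a)=f(a)$, and a common extension $q\in G$ of $q'$ and $p$ satisfies $q\leq p$ together with $q\Vdash\lusim{f}(a)=f(a)$ by monotonicity of $\Vdash$, witnessing membership.

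Next I would recall that, since $V\sse V[G]$, any bijection witnessed in $V$ persists to $V[G]$; hence every cardinal of $V[G]$ is already a cardinal of $V$, and ``preserving all cardinals $\geq\k$'' means precisely that no $V$-cardinal $\l\geq\k$ admits, in $V[G]$, a surjection onto it from a smaller ordinal. So suppose toward a contradiction that some cardinal $\l\geq\k$ is not a cardinal in $V[G]$; then there is in $V[G]$ a surjection $g\func\mu\to\l$ with $\mu<\l$. Applying the covering lemma to $g$ yields $F\in V$ with domain $\mu$, $g(\xi)\in F(\xi)\sse\l$ and $\power{F(\xi)}<\k$ for all $\xi<\mu$, so that $\l=\range(g)\sse\bigcup_{\xi<\mu}F(\xi)$.

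It remains to bound this union, which is where the two cases and regularity enter. If $\l>\k$, then both $\power{\mu}<\l$ and $\k<\l$, so the crude estimate $\power{\bigcup_{\xi<\mu}F(\xi)}\leq\power{\mu}\cdot\k=\max(\power{\mu},\k)<\l$ already contradicts that the union covers $\l$. The delicate case is $\l=\k$, where this crude bound only yields $\leq\k$; here I would instead observe that $\bigcup_{\xi<\mu}F(\xi)$ is a union of fewer than $\k$ sets, each of size $<\k$, and invoke the \emph{regularity} of $\k$ to conclude that the union itself has size $<\k$ and so cannot equal $\k$. Either way we reach a contradiction, proving the theorem. I expect the covering lemma --- specifically the antichain/$\k$-c.c.\ bound on $\power{F(a)}$ --- to be the real content; the final counting is routine cardinal arithmetic (available since $V\models AC$) once regularity is used to settle the diagonal case $\l=\k$.
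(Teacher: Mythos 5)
Your proof is correct, and while it runs on the same engine as the paper's --- for each input $\a$, the set of values that conditions below $p$ can force for $\lusim{h}(\a)$ is bounded by an antichain, hence has size $<\kappa$ by the $\kappa$-c.c., and one then counts --- it packages that engine differently, and the difference matters. The paper argues directly about a putative collapse, fixing a surjection $h:\lambda\rightarrow\lambda^+$ with $\lambda\geq\kappa$, choosing maximal antichains $A_\a$ for $\a<\lambda$, and counting the set $X$ of decided values ($\leq\lambda\cdot\kappa=\lambda<\lambda^+$); you instead isolate the standard covering lemma (every $f:A\rightarrow B$ in $V[G]$ is covered by an $F\in V$ with $\power{F(a)}<\kappa$) and then do the cardinal arithmetic in two cases. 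Your route buys two things. First, the paper's opening reduction --- ``there is $h:\lambda\leftrightarrow\lambda^+$ for some regular $\lambda\geq\kappa$'' --- silently omits the case where $\kappa$ itself is the least collapsed cardinal, i.e.\ a surjection from some $\mu<\kappa$ onto $\kappa$ (e.g.\ $\kappa=\nu^+$ with a surjection $\nu\rightarrow\kappa$); that case does not fit the template $\lambda\geq\kappa$, and it is exactly the case in which the regularity of $\kappa$ is needed. Indeed the paper's counting never uses regularity of $\kappa$ at all (nor of $\lambda$, despite stating it), whereas your $\lambda=\kappa$ case invokes it precisely where the theorem requires it, so your argument is actually the more complete one. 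Second, your covering lemma is reusable (it is the usual tool for showing $\kappa$-c.c.\ forcing also preserves cofinalities $\geq\kappa$), at the cost of being slightly longer than the paper's one-shot counting. Two small points to make explicit if you write this up: the witnesses $q_b$ are chosen using $AC$ in $V$ (which is given), and the incompatibility of $q_b,q_{b'}$ for $b\neq b'$ should be routed through a generic containing a common extension together with the fact that $p$ forces $\lusim{f}$ to be a function --- which is exactly how you argued it.
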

\begin{proof}
Assume not; so there is one, say $h:\l \leftrightarrow \l^+,$ for some regular $\l\geq \k.$ Some $p$ in $G$ forces
\begin{center}
``$\lusim{h}$ is a function from $\l$ onto $\l^+$''.
\end{center}
Given $\a<\l,$ pick a maximal antichain $A_\a$ consisting of conditions $q$ such that $q\leq p$ and $q\Vdash$``$\lusim{h}(\a)=\delta$'', for some $\delta<\l^+.$

Given any $q, \delta$ is unique. The set of possible $\delta$'s  is therefore of cardinality $<\k,$ as one has $\l$ many $\a$'s, this gives at most $\l$ possible $\d$'s altogether. Let $X$ be the set of these $\delta$'s.
\begin{claim}
In $V[G]$, the range of $h$ is included in $X$.
\end{claim}
\begin{proof}
Otherwise, pick $\b< \l, \rho\notin X$ such that $h(\beta)=\rho.$ Pick $q\leq p$ such that $q\Vdash$``$\lusim{h}(\beta)=\rho$''. $A_\beta$ is maximal, so $q$ is compatible with some $q'\in A_\beta.$ A common lower bound $r$ of $q, q'$ forces
\begin{center}
$r\Vdash$``$\lusim{h}(\beta)=\rho$'', $\rho\notin X,$

$r\Vdash$``$\lusim{h}(\beta)=\delta$'', $\delta\in X.$
\end{center}
Contradiction
\end{proof}
It follows that  the range of $h$ can not cover $\l^+$.
\end{proof}

\chapter{Random forcing}
\section{Adding one random real}
Let's start with the definition of the forcing notion.
The random (real) forcing $\RR$ is the set of compact sets of the real line of measure $> 0.$
\begin{lemma}
The forcing $\RR$ has the $c.c.c.$ (countable chain condition): any antichain is countable.
\end{lemma}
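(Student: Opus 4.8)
The plan is to translate the combinatorial condition ``antichain'' into a purely measure-theoretic one and then count. First I would pin down when two conditions are compatible. Recall that in $\RR$ a stronger condition is a smaller set, so $p \leq q$ means $p \subseteq q$. Given $p, q \in \RR$, the intersection $p \cap q$ is again compact, and any common extension $r$ satisfies $r \subseteq p \cap q$; hence $p$ and $q$ have a common extension of positive measure precisely when $\mu(p \cap q) > 0$, in which case $p \cap q$ itself is such an extension. Thus $p \perp q$ iff $\mu(p \cap q) = 0$, and an antichain is exactly a family of positive-measure compact sets that are \emph{pairwise almost disjoint} with respect to Lebesgue measure $\mu$.

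Next I would break an arbitrary antichain $A$ into countably many pieces according to two parameters: a lower bound on the measure of a condition and a bound on its support. For $n, m \geq 1$ set
$$ A_{n,m} = \{\, p \in A : \mu(p) \geq 1/n \text{ and } p \subseteq [-m,m] \,\}. $$
Since every compact set is bounded and every condition has positive measure, $A = \bigcup_{n,m \geq 1} A_{n,m}$, so it suffices to show that each $A_{n,m}$ is finite.

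The counting step is where additivity does the work. Suppose $p_1, \dots, p_k$ are distinct elements of $A_{n,m}$. Replacing each $p_i$ by $p_i' = p_i \setminus \bigcup_{j<i} p_j$ removes only sets of measure zero, since the pairwise intersections are null, so the $p_i'$ are pairwise disjoint, satisfy $\mu(p_i') = \mu(p_i)$, and have the same union as the $p_i$. Finite additivity then gives $k/n \leq \sum_i \mu(p_i) = \mu(\bigcup_i p_i) \leq \mu([-m,m]) = 2m$, whence $k \leq 2mn$. Therefore $|A_{n,m}| \leq 2mn < \omega$, and $A$, being a countable union of finite sets, is countable.

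I expect the only point requiring genuine care to be the compatibility characterization in the first step, specifically verifying that a common extension must lie below $p \cap q$ and that positivity of $\mu(p \cap q)$ is both necessary and sufficient; once incompatibility is identified with null intersection, the partition-and-count argument is routine and uses only boundedness of compact sets together with additivity of $\mu$. A secondary subtlety is the unboundedness of the line, which has infinite total measure; this is precisely why the support bound $[-m,m]$ must be built into the partition rather than working with the measure alone.
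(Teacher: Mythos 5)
Your proof is correct, but it takes a genuinely different route from the paper's. The paper metrizes the problem: it equips the compact sets with the semi-metric $d(K,K')=\mu(K\triangle K')$, shows this space is separable (finite unions of closed intervals are dense), and observes that whenever $2\epsilon<\mu(K_0)$, any two conditions within $\epsilon$ of $K_0$ are compatible; this produces a countable basis $\mathcal{C}_n$ of pairwise-compatible families, so an antichain meets each $\mathcal{C}_n$ at most once --- in effect the paper proves that $\RR$ is $\sigma$-linked. You instead reduce compatibility to the purely measure-theoretic criterion $\mu(p\cap q)>0$ (your verification of this is right: a common extension lies inside $p\cap q$, and conversely $p\cap q$ is itself compact, hence a condition when non-null) and then count directly: the decomposition $A=\bigcup_{n,m}A_{n,m}$ is exhaustive because compact sets are bounded and conditions have positive measure, and the disjointification $p_i'=p_i\setminus\bigcup_{j<i}p_j$ loses only null sets, giving $|A_{n,m}|\leq 2mn$ by finite additivity. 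Your argument is the more elementary of the two --- it uses nothing beyond additivity of $\mu$ and boundedness --- and it yields a slightly different strengthening of c.c.c.: the antichain splits into countably many pieces of \emph{uniformly bounded finite} size. The paper's argument is the standard one showing that the measure algebra of any separable measure space is c.c.c., and its $\sigma$-linkedness conclusion is the form that is typically reused elsewhere (for instance in product and Knaster-type arguments). Note also that the paper's compatibility step, $\mu(K\cap C)\geq\mu(K_0)-d(K,K_0)-d(C,K_0)>0$, is exactly your criterion used implicitly, so the two proofs share their foundation and diverge only in how they extract countability from it.
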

\begin{proof}
Define a semi-metric $d$ on the set of compact subsets of the real line by
\begin{center}
$d(K, K')=\mu(K\triangle K'),$
\end{center}
where $\mu$ is the lebesgue measure and $\triangle$ is the symmetric difference. In the associated topology, there is a countable dense set  namely the finite union of closed intervals. Indeed let $K$ be given, $K$ is covered by an open set $U$ with $\mu(U\setminus K) < \epsilon / 2,$ and a finite union of intervals $V=\bigcup_{i=1}^{n}(a_i, b_i),$ such that $V \subseteq U$ and $\mu(U\setminus V)< \epsilon/2$. Therefore $\mu(K\triangle \bar{V}) < \epsilon,$ and $\bar{V}$ is of required type.

Now if $2\epsilon <\mu(K_0)$ and $d(K, K_0)<\epsilon, d(C, K_o)<\epsilon,$ then $K, C$ are compatible. From this it follows that there is a countable basis of the topology consisting of sets $\mathcal{C}_n$ such that any two elements in  $\mathcal{C}_n$ are compatible. The $c.c.c.$ easily follows.
\end{proof}
 Let $G$ be generic for the above set of conditions. The intersection of all compact sets in $G$ is a real.
\begin{remark}
Actually the compact sets do not remain compact in $V[G]$. We replace them by their closure.
\end{remark}
The uniqueness is proved as follows: If not, let $g, g'$ be two elements of the intersection. Let $q\in\QQ, g < q < g'.$ Now we claim that
\begin{center}
$\{K: K \subseteq (-\infty, q)$ or $ K \subseteq (q, +\infty)  \}$
\end{center}
is dense, therefore a generic cannot contain both of $g$ and $g'$.
\begin{lemma}
The real $g$ does not belong to any $G_\delta$ set $X$ of zero measure coded in $V$.
\end{lemma}
\begin{proof}
 Let $X=\bigcap_{n<\omega}U_n,$ where each $U_n$ is an open set, $(U_n: n<\omega)$ is decreasing and $\mu(U_n)\rightarrow 0.$ We then note that
\begin{center}
$\{K: \exists n, K\cap U_n=\emptyset  \}$
\end{center}
is dense. This is because given $K_0$, we can pick $n$ such that $\mu(K_0\cap U_n) < \mu(K_0)/2.$ Then $K_0\setminus U_n$ is a compact set, if it is of measure $>0.$ From this the result follows immediately.
\end{proof}
We have a converse: Let $g$ be a real; let
\begin{center}
$\tilde{g}=\{K: K$ is a compact set coded in $V$ and $g\in \bar{K}\}.$
\end{center}
\begin{lemma}
$\tilde{g}$ is generic iff $g$ does not belong to any $G_\delta$ zero measure subset of $\mathbb{R}$ coded in $V$.
\end{lemma}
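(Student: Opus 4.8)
The plan is to prove both implications by verifying, or refuting, the three clauses in the definition of a generic filter. For the forward direction I would argue contrapositively and reuse the previous lemma directly: suppose $\tilde{g}$ is generic yet $g$ lies in some zero-measure $G_\delta$ set $X = \bigcap_n U_n$ coded in $V$, with the $U_n$ open and $\mu(U_n) \to 0$. The previous lemma already shows that $\{K : \exists n,\ K \cap U_n = \emptyset\}$ is dense, so by genericity some $K \in \tilde{g}$ satisfies $K \cap U_n = \emptyset$ for some $n$. Since $K$ is closed and $U_n$ is open, this forces $\bar{K} \cap U_n = \emptyset$, contradicting $g \in \bar{K}$ together with $g \in X \subseteq U_n$. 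Hence a generic $\tilde{g}$ keeps $g$ out of every such $X$.

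For the converse I would assume $g$ avoids every zero-measure $G_\delta$ set coded in $V$ and check the three clauses for $\tilde{g}$. Upward closure is immediate: if $g \in \bar{K}$ and $K \subseteq K'$, then $\bar{K} \subseteq \bar{K'}$, and $K'$ still has positive measure. For compatibility, the key observation is that two conditions $K, K'$ are compatible exactly when $\mu(K \cap K') > 0$, since a positive-measure set contains a compact positive-measure subset by inner regularity. If $K, K' \in \tilde{g}$ but $\mu(K \cap K') = 0$, then $K \cap K'$ is a compact — hence $G_\delta$, as closed sets are $G_\delta$ in a metric space — null set coded in $V$, and $g \in \bar{K} \cap \bar{K'} = K \cap K'$ would lie in it, contradicting the hypothesis. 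So any two members of $\tilde{g}$ are compatible.

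The hard part will be the density clause, and this is where the chain condition and a covering argument enter. Given a dense $D \in V$, I would first extract, using $AC$ in $V$, a maximal antichain $A \subseteq D$; by density $A$ is in fact maximal in $\RR$, and by the $c.c.c.$ it is countable, say $A = \{K_n : n < \omega\}$. Maximality means that no compact positive-measure set is disjoint from every $K_n$, so by inner regularity $\RR \setminus \bigcup_n K_n$ can contain no compact set of positive measure and is therefore null. This complement is $G_\delta$, being the complement of the $F_\sigma$ set $\bigcup_n K_n$, and is coded in $V$ from a code of $A$; its measure is $0$ in $V$ and, by absoluteness of the measure of a coded Borel set, $0$ in the real world as well. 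Since $g$ avoids it, $g \in \bigcup_n K_n$, i.e.\ $g \in K_n$ for some $n$, whence $K_n \in \tilde{g} \cap D$.

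The main obstacle is precisely this last step: manufacturing, from an arbitrary dense set, a single null $G_\delta$ set whose avoidance pins $g$ inside a member of the antichain. It relies on the $c.c.c.$ to keep the antichain countable, on inner regularity to certify that the leftover complement carries no positive measure, and on absoluteness of Lebesgue measure to guarantee that this complement is genuinely null both in $V$ and in the ambient universe where $g$ is tested against it.
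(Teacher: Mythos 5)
Your proof is correct and takes essentially the same route as the paper: for the density clause you pick a maximal antichain $A \subseteq D$, use the $c.c.c.$ to make it countable, and show via inner regularity and maximality that the complement of $\bigcup\{K : K \in A\}$ is a null $G_\delta$ set coded in $V$, which $g$ avoids by hypothesis, forcing $g \in \bar{K}$ for some $K \in A \subseteq D$. Your only additions are to spell out what the paper treats as immediate---the forward direction via the earlier dense set $\{K : \exists n,\ K \cap U_n = \emptyset\}$, and the compatibility clause, where you rightly observe that it genuinely uses the hypothesis, since a null compact set is itself a null $G_\delta$ coded in $V$.
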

\begin{proof}
We have only one implication to establish. Properties $(1)$ and $(2)$ of genericity are clear. Let us see the third one. Let $D$ be a dense set in $V$. Pick a maximal antichain $A$ of elements of $D.$ $A$ is countable.
\begin{claim}
$\bigcup\{ K: K\in A\}$ is an $F_\sigma$ set whose completion is of zero measure.
\end{claim}
\begin{proof}
Otherwise some $K'$ is included in the complement with $\mu(K')>0.$ Replacing a smaller one $\tilde{K},$ we can assume $\tilde{K}\in D.$ This contradicts the maximality of $A$.
\end{proof}
Now the real $g$ does not belong to the complement of the set $\bigcup\{ K: K\in A\}$ of $V[G],$ hence for some $K, g\in \bar{K}.$
\end{proof}
It should be noted that if we go from $G$ to $g$ and then go to $\tilde{g},$ we get $G \subseteq \tilde{g}.$ Equality then follows from the following general lemma.
\begin{lemma}
If $G, G'$ are both $\PP-$geenric over $V$ and $G \subseteq G'$, then $G=G'.$
\end{lemma}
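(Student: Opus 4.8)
The plan is to prove the two inclusions that together give equality: one direction, $G \subseteq G'$, is given by hypothesis, so the whole content is to establish $G' \subseteq G$. I would argue by contradiction. Suppose there were some $q \in G' \setminus G$; the strategy is to produce a dense set of $V$ that $G$ must meet by property $(3)$ of genericity, and then show that whichever way $G$ meets it leads to a contradiction.

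The key construction is the dense set
\begin{center}
$D = \{p \in \PP : p \leq q\} \cup \{p \in \PP : p \incompatible q\},$
\end{center}
the collection of conditions that either extend $q$ or are incompatible with $q$. First I would check that $D \in V$: since $\PP \in V$ and $q \in \PP \subseteq V$, the set $D$ is definable in $V$ from the parameters $\PP$ and $q$, hence lies in $V$. Next I would verify density. Given any $r \in \PP$, there are two possibilities. Either $r$ is compatible with $q$, in which case a common extension $s$ with $s \leq r$ and $s \leq q$ belongs to $D$ and witnesses density below $r$; or $r \incompatible q$, in which case $r$ itself already lies in $D$. So $D$ is dense.

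By genericity of $G$, property $(3)$ yields some $p \in D \cap G$, and I would split according to the two clauses defining $D$. If $p \leq q$, then upward closure of $G$ (property $(1)$) forces $q \in G$, contradicting $q \notin G$. If instead $p \incompatible q$, then since $p \in G \subseteq G'$ and $q \in G'$, both $p$ and $q$ belong to $G'$; by property $(2)$ applied to $G'$ they must be compatible, contradicting $p \incompatible q$. Either way we reach a contradiction, so no such $q$ exists, giving $G' \subseteq G$ and hence $G = G'$.

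The main obstacle is simply spotting the right dense set; once $D$ is in hand, every verification is routine. The one point worth emphasizing is that the incompatibility case is exactly where the hypothesis $G \subseteq G'$ is used: it is this inclusion that places $p$ and $q$ into the single filter $G'$ and thereby makes property $(2)$ applicable. Thus the assumption $G \subseteq G'$ is not merely bookkeeping but does the essential work in ruling out the second case.
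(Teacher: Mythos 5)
Your proof is correct and matches the paper's argument exactly: the paper uses the same dense set $D_p=\{q\in \PP: q\leq p$ or $q$ is incompatible with $p\}$ and the same two-case analysis, with the incompatibility case refuted via compatibility inside $G'$. You merely spell out the routine verifications (that $D\in V$ and that $D$ is dense) which the paper leaves implicit, so no further comment is needed.
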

\begin{proof}
If $p\in G'\setminus G,$ then the set
\begin{center}
$D_p=\{q\in \PP: q\leq p$ or $q$ is incompatible with $p \}$
\end{center}
is dense, hence $G\cap D_p\neq \emptyset.$ Pick $q\in G\cap D_p.$ If $q\leq p, p\in G,$ contradiction. Otherwise $q$ is incompatible with $p$, then as $p,q$ are both in $G'$, we also get a contradiction.
\end{proof}
\begin{lemma}
Any real $x$ of $V[G]$ is the value on $g$ of a Borel measurable function of $V$.
\end{lemma}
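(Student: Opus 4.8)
The plan is to reconstruct $x$ from $g$ using the fact, established just above, that $g$ recovers the generic filter: $G = \tilde g$, so that for a $V$-coded compact set $K$ the assertion $K \in G$ is equivalent to the concrete closed condition $g \in \bar K$. Since every real of $V[G]$ is Borel-coded by an element of $2^\om$ under a coding fixed in $V$, it suffices to treat $x$ as a function $x \colon \om \to 2$ and to produce a $V$-coded Borel $f$ with $f(g) = x$; the general case then follows by composing $f$ with the decoding map, which is itself Borel and coded in $V$. I may assume that the chosen name $\lusim x$ is forced to name an element of $2^\om$.

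For each $n < \om$ the conditions deciding $\lusim x(n)$ are dense in $\RR$ (given any $p$, either $p \Vdash \lusim x(n) = 1$, or by the basic properties of $\Vdash$ some $q \leq p$ forces its negation, hence $q \Vdash \lusim x(n) = 0$). So, working in $V$, I would choose a maximal antichain $A_n$ each of whose members decides $\lusim x(n)$; such an $A_n$ is in fact maximal in all of $\RR$, and by the c.c.c.\ of $\RR$ established above it is countable. Setting $A_n^1 = \{K \in A_n : K \Vdash \lusim x(n) = 1\}$, I would define
$$U_n = \bigcup_{K \in A_n^1} \bar K.$$
As $A_n^1$ is a countable family of $V$-coded compact sets, $U_n$ is an $F_\sigma$ set with a code in $V$, and the sequence $\langle A_n^1 : n < \om \rangle$ lies in $V$; hence the function $f$ defined in $V$ by
$$f(y)(n) = 1 \iff y \in U_n$$
is Borel measurable, its code being uniform in $n$.

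The verification that $f(g) = x$ proceeds coordinatewise. Because $A_n$ is a maximal antichain coded in $V$, genericity forces $G$ to meet it (the conditions below some member of $A_n$ form a dense set met by $G$, and $G$ is upward closed), and by the antichain property this intersection point $K \in A_n \cap G$ is unique; since $K \in G$, the value it decides is the true value, i.e. $K \Vdash \lusim x(n) = x(n)$. If $x(n) = 1$ then $K \in A_n^1$, and $K \in G = \tilde g$ gives $g \in \bar K \sse U_n$, so $f(g)(n) = 1$. If $x(n) = 0$ and yet $g \in U_n$, then $g \in \bar{K'}$ for some $K' \in A_n^1$, so $K' \in \tilde g = G$ with $K' \Vdash \lusim x(n) = 1$, forcing $x(n) = 1$, a contradiction; hence $f(g)(n) = 0 = x(n)$.

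The main obstacle is guaranteeing that $f$ is genuinely Borel and coded in $V$: the naive union of all closures $\bar K$ over conditions forcing $\lusim x(n) = 1$ ranges over an uncountable family and need not be Borel. This is exactly where the c.c.c.\ is indispensable, since it makes the deciding antichains $A_n$ countable and thus renders each $U_n$ an honest $F_\sigma$ set with a code in $V$. The other essential ingredient is the identity $G = \tilde g$, which converts the semantic membership $K \in G$ into the testable closed condition $g \in \bar K$ that $f$ evaluates.
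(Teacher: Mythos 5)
Your proof is correct, and it runs on the same engine as the paper's: conditions deciding the relevant facts about $\lusim{x}$ are dense, a maximal antichain of such conditions is countable by the c.c.c., and the union of its members is therefore an $F_\sigma$ set coded in $V$ from which the Borel function is assembled. What you do differently is the decomposition of the real and the verification mechanism. The paper stays with the real itself: it fixes $K_0$ forcing $\lusim{x}\in[0,1]$, takes for each rational $q$ a maximal antichain $A_q$ of conditions below $K_0$ forcing $\lusim{x}<q$, sets $X_q=\bigcup\{K: K\in A_q\}$, and recovers $x$ as $\Phi(g)$ where $\Phi=\inf_{q\in\QQ}\Phi_q$; the two inequalities $\Phi(g)\leq x$ and $x\leq\Phi(g)$ are then checked by separate density arguments. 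You instead pass to a $2^\omega$-code of $x$ and decide it digit by digit, which lets you run a single, symmetric verification through the identity $G=\tilde{g}$: the unique member of $A_n$ lying in $G$ must decide correctly, and membership $K\in G$ translates into the testable condition $g\in\bar{K}$. Your route buys a cleaner coordinatewise check and makes explicit a point the paper uses only implicitly (when it concludes, from $g$ belonging to some $K\in A_q$, that $K$'s decision holds in $V[G]$); the price is the bookkeeping of the coding/decoding reduction and the step ``I may assume $\lusim{x}$ is outright forced to name an element of $2^\omega$,'' which deserves a word of justification --- either massage the name by a mixing argument so that $1_{\RR}$ forces totality into $\{0,1\}$, or, as the paper in effect does with $K_0$, fix a condition of $G$ forcing it and build the deciding antichains below that condition (genericity still makes $G$ meet them, since $G$ contains nothing incompatible with that condition). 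Neither point is a gap; both are routine, and your argument that each $A_n$, chosen maximal among deciding conditions, is maximal in all of $\RR$ is sound.
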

\begin{proof}
We only treat the case of reals of the interval $[0,1]$; by adding a positive or negative integer it is possible to restrict ourself to this case. We first pick a condition $K_0$ such that
\begin{center}
$K_0\Vdash$``$\lusim{x}$ is a real of $[0,1]$'',
\end{center}
Now for any element $q$ of $\QQ\cap [0,1],$ pick a maximal antichain $A_q$ consisting of conditions $K\leq K_0$ such that $K\Vdash$``$\lusim{x} < q$''. $A_q$ is countable and we let $X_q=\bigcup\{ K: K\in A_q \}.$ $X_q$ is an $F_\sigma$ subset of $\RR$. We let $\Phi_q$ be the function whose value is $q$ on $X_q$ and is $1$ otherwise. Finally we define $\Phi$ to be $\inf_{q\in \QQ}\Phi_q.$
\begin{claim}
The value of $\Phi$ at $g$ is exactly $x$.
\end{claim}
\begin{proof}
First we show that $\Phi(g)\leq x.$ Otherwise, there is $q\in \QQ$ such that $x< q < \Phi(g).$ Now some condition $L$ of $G$ is such that $L\leq K_0$ and $L\Vdash$``$\lusim{x} < q$''. Now it is easily seen that the set
\begin{center}
$D=\{L': L'$ is incompatible with $L$, or $L'$ is below $L$ and some condition from $A_q  \}$
\end{center}
is dense. We pick some $L'\in G\cap D;$ $L'$ is a subset of $X_q$ and therefore  $\Phi(g)<q,$ contradiction.

Now we show that $x\leq \Phi(g).$ Otherwise for some $q, \Phi_q(g) < x.$ This implies $g\in X_q,$ hence $\Phi_q(g)=q.$ But then $g$ belongs to some $K\in A_q,$ contradiction as $K\Vdash$``$\lusim{x} < q.$''.
\end{proof}
The lemma follows.
\end{proof}
Using Lusin's theorem from measure theory, together with a density argument we get
\begin{theorem}
Any real in $V[G]$ is the image of a continuous function of the ground model defined on a compact $K$ of positive measure such that $g\in K.$
\end{theorem}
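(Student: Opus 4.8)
The plan is to take the Borel measurable function produced by the previous lemma and, after shrinking its domain to a compact set of positive measure that happens to fall into the generic filter, replace it by a genuinely continuous function. So fix a real $x$ of $V[G]$. By the preceding lemma there is, in $V$, a Borel measurable function $\Phi$ with $\Phi(g)=x$, the value $\Phi(g)$ being computed in $V[G]$ from a Borel code for $\Phi$.

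Working in $V$, the first step is to show that
\[
E=\{\,K\in\RR : \Phi\restriction K \text{ is continuous}\,\}
\]
is dense. Given any condition $K_0$, the restriction $\Phi\restriction K_0$ is a measurable function on the finite-measure set $K_0$, so Lusin's theorem supplies, for each $\epsilon>0$, a compact $K\subseteq K_0$ with $\mu(K_0\setminus K)<\epsilon$ such that $\Phi\restriction K$ is continuous. Taking $\epsilon=\mu(K_0)/2$ gives $\mu(K)\geq\mu(K_0)/2>0$, so $K$ is itself a condition, $K\leq K_0$, and $K\in E$. Hence $E$ is dense. By genericity we may fix $K\in G\cap E$; since $K\in G$ the real $g$ lies in (the closure of) $K$, by the earlier remark, and $f:=\Phi\restriction K$ is a continuous ground-model function defined on the compact positive-measure set $K$ containing $g$.

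It remains to verify that $f(g)=x$ in $V[G]$, and this is the delicate point, since $f$ is defined in $V$ while $g$ is a new real. The statement ``$\,\forall y\in K\ \Phi(y)=f(y)\,$'' holds in $V$ and, being essentially $\Pi^1_1$ in the codes of $\Phi$, $f$ and $K$, is absolute to $V[G]$ by the Shoenfield-style absoluteness available in this setting. Applying it to $y=g\in K$ gives $f(g)=\Phi(g)=x$ in $V[G]$. Thus $x$ is the image of $g$ under the continuous ground-model function $f$, whose domain is a compact set of positive measure containing $g$, which is exactly the assertion. I expect the only real obstacle to be this last absoluteness bookkeeping — pinning down precisely why the ground-model continuous function, evaluated at the new real $g$, returns the same value $x$ — whereas the density step is a direct appeal to Lusin's theorem.
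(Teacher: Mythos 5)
Your proof is correct and takes essentially the same route as the paper: the paper's entire proof of this theorem is the one-line remark that it follows ``using Lusin's theorem from measure theory, together with a density argument,'' and your density of $E=\{K:\Phi\restriction K \text{ continuous}\}$ via Lusin plus genericity is exactly that argument. The absoluteness bookkeeping you add for $f(g)=\Phi(g)=x$ (a $\Pi^1_1$ statement in the codes, transferred by Shoenfield-style absoluteness, with $g\in \bar{K}$ as in the paper's earlier remark on replacing compact sets by their closures in $V[G]$) correctly fills in the details the paper leaves implicit, using tools the paper explicitly assumes in its introduction.
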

\begin{corollary}
Any real in $V[G]$ is included in some nowhere dense closed set of the ground model $V$.
\end{corollary}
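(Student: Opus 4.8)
The plan is to combine the preceding theorem with a genericity (density) argument. By that theorem I may fix a continuous function $f$ of the ground model, defined on a compact set $K_0$ of positive measure with $g\in K_0$, so that the given real is $x=f(g)$. Since a continuous image of a compact set is compact, hence closed, and since ``closed'' and ``nowhere dense'' are absolute for Borel codes, it suffices to produce a single condition $K^*\in G$ with $f[K^*]$ nowhere dense: then $f[K^*]$ is a nowhere dense closed set coded in $V$, and $x=f(g)$ will lie in it. Concretely, I would show that
\[
D=\{K'\in\RR: K'\leq K_0\text{ and } f[K'] \text{ is nowhere dense}\}
\]
is dense below $K_0$, and then invoke genericity to get $K^*\in G\cap D$.

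To prove density, fix any $K\leq K_0$ and consider the pushforward measure $\nu(A)=\mu(f^{-1}(A)\cap K)$, a finite Borel measure of mass $\mu(K)$ living in $V$. The hard part is that the naive idea---deleting from $K$ the $f$-preimages of tiny intervals around the rationals---fails when $f$ is flat on a positive-measure set, i.e.\ exactly when $\nu$ has an atom. I would handle this by a dichotomy. If some point $r$ carries positive mass, $\nu(\{r\})=\mu(f^{-1}(\{r\})\cap K)>0$, then $K'=f^{-1}(\{r\})\cap K$ is compact of positive measure with $K'\leq K$ and $f[K']=\{r\}$, which is nowhere dense. Otherwise $\nu$ is non-atomic, so $\nu(J)\to 0$ as an interval $J$ shrinks to a point; enumerating the rationals as $(r_n:n<\omega)$ I can choose $\delta_n$ so small that $J_n=(r_n-\delta_n,r_n+\delta_n)$ satisfies $\nu(J_n)<\mu(K)\,2^{-n-2}$. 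Then $K'=K\setminus f^{-1}\big(\bigcup_n J_n\big)$ is compact (we remove a relatively open set), has measure at least $\mu(K)/2>0$, and $f[K']\subseteq\RR\setminus\bigcup_n J_n$, a closed set with dense open complement, hence nowhere dense. In either case $K'\in D$ and $K'\leq K$, so $D$ is dense below $K_0$.

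Finally, pick $K^*\in G\cap D$ and let $F=\overline{f[K^*]}$ be the nowhere dense closed set given by the corresponding Borel code, interpreted in $V[G]$. Since $f$ is continuous, $f^{-1}(F)$ is closed and contains $K^*$, hence contains $\overline{K^*}$; and $g\in\overline{K^*}$ because $K^*\in G$. Therefore $x=f(g)\in F$, a nowhere dense closed set coded in $V$, as required. The only place the introductory absoluteness remarks are needed is to guarantee that the Borel code for $f[K^*]$ still defines a nowhere dense closed set in $V[G]$; this is precisely the kind of $\Pi^1_1$/Borel-code absoluteness (Shoenfield) invoked at the outset.
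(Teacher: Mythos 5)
Your proof is correct, but it takes a genuinely different route from the paper's. The paper argues more directly: since $g$ is random it avoids a dense $G_\delta$ null set $X=\bigcap_n U_n$ coded in $V$, hence $g$ lies in the closed nowhere dense set $F=\RR\setminus U_n$ for some $n$; the paper then simply takes $\Phi[K\cap F]$ and asserts it is a compact nowhere dense set coded in $V$ containing $x=\Phi(g)$. That assertion is exactly where your approach earns its keep: a continuous image of a compact nowhere dense set need not be nowhere dense (a Cantor set can be mapped continuously onto an interval), so the paper's one-line conclusion glosses over precisely the point that your pushforward-measure dichotomy settles. Your density argument is sound in both branches: in the atomic case $f^{-1}(\{r\})\cap K$ is a legitimate condition whose image is a singleton, and in the non-atomic case continuity from above of the finite measure $\nu$ justifies choosing the $\delta_n$, the removed set $f^{-1}\bigl(\bigcup_n J_n\bigr)\cap K$ is relatively open so $K'$ is compact, the measure loss is below $\mu(K)/2$, and $f[K']$ avoids a dense open set, hence is nowhere dense; so $D$ is dense below $K_0$ as claimed. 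One small step you should make explicit: density of $D$ below $K_0$ only guarantees $G\cap D\neq\emptyset$ if $K_0\in G$, not merely $g\in K_0$; this is fine here because the paper establishes $G=\tilde{g}=\{K: g\in\bar{K}\}$, so $g\in K_0$ does give $K_0\in G$, but that lemma deserves an explicit citation at this point. With that noted, your final step ($g\in\overline{K^*}$ since $K^*\in G$, hence $x=f(g)\in f[\,\overline{K^*}\,]\subseteq\overline{f[K^*]}$, with the Borel-code absoluteness keeping $F$ closed and nowhere dense in $V[G]$) is exactly right. In short: the paper's proof is shorter but elides a real issue; yours is longer but actually closes it.
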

\begin{proof}
First of all, there is a $G_\delta$ dense subset of zero measure in $V$, say $X=\bigcap_{n<\omega}U_n,$ so that $g\notin X.$ Hence $g$ belongs to one of the complements, call it $F$.

In order to treat the general case, we use the fact that a real $x$ is the range of $g$ via a continuous function $\Phi$ of $V$, defined on a compact set $K, \mu(K)>0.$ Now $\Phi[K\cap F]$ is a compact nowhere dense set coded in $V$ and contains $\Phi(g)=x.$
\end{proof}
From the Corollary it will follow, once we know Cohen generic reals, that no such real appears in $V[G].$ We close discussing the single random real model by the following. Let $\RR^V$ be the reals of the ground model $V$.
\begin{theorem}
$(a)$ $\RR^V$ is meager,

$(b)$ $\RR^V$ is not measurable.
\end{theorem}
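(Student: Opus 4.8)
The plan is to handle both parts with a common set of tools established above: that $g$ belongs to no $G_\delta$ null set coded in $V$ (the avoidance lemma), that every real of $V[G]$ is $f(g)$ for a ground-model Borel function $f$ (the naming lemma), Shoenfield absoluteness for Borel codes, and some classical measure theory --- the meager--null decomposition of $\RR$, Steinhaus's theorem, and Fubini's theorem.

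For $(a)$, first fix in $V$ a partition $\RR = A \cup B$ with $A$ a meager $F_\sigma$ set and $B$ a dense null $G_\delta$ set (the standard construction, placing rapidly shrinking intervals around the rationals); its Borel codes lie in $V$. Reinterpreting the codes in $V[G]$ gives Borel sets $A^{*}, B^{*}$ that still partition $\RR$, with $A^{*}$ still meager, by Shoenfield absoluteness. The decisive step is $\RR^{V} \subseteq A^{*}+g$: for an old real $x$, the set $x-B$ is again a $G_\delta$ null set coded in $V$ (translation is a measure-preserving homeomorphism), so the avoidance lemma gives $g \notin (x-B)^{*} = x - B^{*}$; hence $x-g \notin B^{*}$, so $x-g \in A^{*}$ and $x \in A^{*}+g$. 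A translate of a meager set is meager, so $A^{*}+g$ is meager and $(a)$ follows.

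For $(b)$ it suffices to show $\RR^{V}\cap[0,1]$ has inner measure $0$ but positive outer measure, so that it is non-measurable. For inner measure $0$: a measurable $K\subseteq\RR^{V}$ with $\mu(K)>0$ would, by Steinhaus's theorem, have $K-K$ containing an interval about $0$; but $\RR^{V}$ is a subgroup of $(\RR,+)$ containing $\QQ$, so $K-K\subseteq\RR^{V}$ would give $(-\delta,\delta)\subseteq\RR^{V}$ and hence $\RR^{V}=\RR^{V[G]}$. This contradicts that $g$ is a new real: every singleton $\{x\}$ with $x\in V$ is a $V$-coded $G_\delta$ null set, so the avoidance lemma yields $g\neq x$ for all $x\in V$, i.e.\ $g\notin V$.

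The crux is positive outer measure, i.e.\ that $\RR^{V}$ is not null. Suppose $\RR^{V}\subseteq N$ for some Borel null $N\in V[G]$. By the naming lemma a Borel code for $N$ is $h(g)$ for a ground-model Borel $h$, so in $V$ there is a Borel family $\{B_y\}$ with $N=B_g$. Absoluteness of nullity gives $g\in P$, where $P=\{y : B_y \text{ is null}\}$ is coded in $V$; since $g$ avoids $V$-coded null sets, $\mu(P)>0$. Applying Fubini in $V$ to $\{(x,y) : y\in P,\ x\in B_y\}$, whose $y$-sections are null for $y\in P$, the set $Z=\{x : \mu(\{y\in P : x\in B_y\})=0\}$ is co-null, hence contains old reals of $[0,1]$. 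Fix such an old $x\in Z$: then $\{y\in P : x\in B_y\}$ is a $V$-coded null set (enlarge it to a $G_\delta$ one), so the random $g\in P$ avoids it, giving $x\notin B_g=N$ and contradicting $x\in\RR^{V}\subseteq N$. I expect this Fubini/reflection step to be the main obstacle, since it is where randomness of $g$, the representation of names as $h(g)$, and absoluteness of nullity must be combined correctly; everything else is bookkeeping with translations and the avoidance lemma.
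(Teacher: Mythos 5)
The paper states this theorem without giving any proof, so there is no in-text argument to compare against; judged on its own merits, your proof is correct and follows the classical route. For $(a)$, the translation trick --- fixing in $V$ a partition $\RR = A \cup B$ into a meager $F_\sigma$ and a null $G_\delta$, noting that $x - B$ is a $V$-coded null $G_\delta$ for each old real $x$, so that the avoidance lemma (the paper's lemma that $g$ belongs to no $G_\delta$ null set coded in $V$) gives $x - g \in A^{*}$ and hence $\RR^{V} \subseteq A^{*} + g$ --- is exactly what that lemma is set up to support, and it parallels the paper's own use of a fixed dense null $G_\delta$ in the corollary about nowhere dense sets. For $(b)$, splitting non-measurability into inner measure zero (Steinhaus applied in $V[G]$, the subgroup structure of $\RR^{V}$, and $g \notin V$ via singleton null sets) and positive outer measure (the Fubini/reflection argument using the naming lemma) is the standard proof that the old reals are non-null, hence non-measurable, after adding a random real.

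One repair is needed in the outer-measure step, at the point you yourself flag as the crux. From the naming lemma you obtain a Borel $h \in V$ with $h(g)$ a code for $N$, but the family $\{(x,y) : x \in B_{h(y)}\}$ is not automatically Borel: the universal membership relation over \emph{all} Borel codes is not Borel (it is not uniformly Borel across ranks), so Fubini and the Borel-ness of $y \mapsto \mu(A_y)$ cannot be applied to it as written. You invoke the ``enlarge to a $G_\delta$'' move only for the section set $S_x$; it must be made at the outset: first replace $N$ by a $G_\delta$ null superset (regularity of Lebesgue measure in $V[G]$), and use a surjective coding under which every real codes a $G_\delta$ set. Then $A = \{(x,y) : x \in B_{h(y)}\}$ is genuinely Borel in $V$, the set $P = \{y : \mu(A_y) = 0\}$ is Borel (the measure of a $G_\delta$ section is a Borel function of the code), and Shoenfield absoluteness transfers $g \in P^{*}$ and the section computations exactly as you intend. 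With that adjustment, and the observation that a null Borel $P$ would sit inside a $V$-coded null $G_\delta$ that $g$ must avoid (justifying $\mu(P) > 0$), the argument is complete.
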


If we consider the effects of adding many random reals, then we  have the following.

\begin{theorem}
($ZFC$) The following are equivalent:

$(a)$ Every $\lusim{\Sigma}_2^1$ set ($PCA$) is Lebesgue measurable,

$(b)$ Almost all reals are random over any inner model $L[\a], \a\in \RR.$
\end{theorem}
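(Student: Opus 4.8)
The plan is to prove the two implications separately, after reducing the theorem to a local statement relative to a fixed real parameter. For a fixed $a\in\RR$ I would establish the equivalence of ``every $\lusim{\Sigma}_2^1(a)$ set is Lebesgue measurable'' with ``the set $R_a=\{x: x$ is random over $L[\a]\}$ has measure one''; quantifying over all $a$ then yields the stated global equivalence. Throughout I would lean on two inputs announced in the introduction, namely Shoenfield absoluteness (so that for reals $a,x$ the $\lusim{\Sigma}_2^1(a)$-membership ``$x\in A$'' is absolute between $V$ and the inner model $L[a,x]$) and the coding of Borel sets, together with the random-real lemmas already proved above, now read with the inner model $L[\a]$ in place of $V$.

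For the direction (b)$\Rightarrow$(a) the heart is a Solovay-style approximation lemma. Given a $\lusim{\Sigma}_2^1$ set $A$ with parameter $a$, I would work with random forcing as computed inside $L[\a]$ and define a Borel set $B$ to be the union, along a maximal antichain (countable, by the $c.c.c.$), of those positive-measure conditions $p\in L[\a]$ for which $p\Vdash^{L[\a]}\lusim{g}\in A$, where $\lusim{g}$ names the random real. The claim to establish is that for every $x$ random over $L[\a]$ one has $x\in A\iff x\in B$. This uses the relativized forms of the lemmas above: a real random over $L[\a]$ generates an $L[\a]$-generic filter $\tilde x$ (the ``$\tilde g$ is generic'' lemma, relativized), so $L[\a][x]$ is a genuine generic extension having $x$ as its random real, and Shoenfield absoluteness then transfers the truth of ``$x\in A$'' between $L[\a][x]$ and $V$, matching it with what the forcing relation computes. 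Granting the lemma, $A\triangle B$ is contained in $\RR\setminus R_a$, which by (b) is null; hence $A$ differs from a Borel set by a null set and is measurable.

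For the direction (a)$\Rightarrow$(b) I would argue through the complexity of the non-random reals. First, $N_a=\RR\setminus R_a=\{x:\exists c\,(c\in L[\a]\ \wedge\ c$ codes a null $G_\delta\ \wedge\ x\in B_c)\}$ is $\lusim{\Sigma}_2^1(a)$, since ``$c\in L[\a]$'' is $\lusim{\Sigma}_2^1(a)$ and the remaining clauses are Borel in $c,x$. Assuming (a), $N_a$ is therefore measurable. As $N_a$ is invariant under the countable dense group of rational (indeed $L[\a]$-coded) translations, since a translate by an $L[\a]$-real of an $L[\a]$-coded null set is again an $L[\a]$-coded null set, the zero-one law forces $\mu(N_a)\in\{0,1\}$. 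To exclude $\mu(N_a)=1$, I would argue that if the reals random over $L[\a]$ were null, then using the $\lusim{\Sigma}_2^1(a)$-good wellordering of the reals of $L[\a]$ one selects, in a $\lusim{\Sigma}_2^1(a)$ way, a subfamily of the null sections $\{x:c(x)=c\}$, where $c(x)$ is the $<_{L[\a]}$-least code capturing $x$, whose union has inner measure $0$ and outer measure $1$; this produces a non-measurable $\lusim{\Sigma}_2^1(a)$ set, contradicting (a). Hence $\mu(N_a)=0$, which is (b).

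The main obstacle is the approximation lemma in (b)$\Rightarrow$(a): one must check carefully that the Borel set $B$, defined purely from the forcing relation \emph{inside} $L[\a]$, computes $A$ correctly at \emph{every} external random real. This rests on three points to be verified in turn, namely the absoluteness of the random-forcing relation between $L[\a]$ and $V$, the fact (the relativized genericity lemma above) that an $L[\a]$-random real yields an $L[\a]$-generic filter so that $L[\a][x]$ is the expected extension, and the Shoenfield transfer of $\lusim{\Sigma}_2^1$ truth between $L[\a][x]$ and $V$. A secondary technical point is the explicit construction ruling out $\mu(N_a)=1$, that is, manufacturing a non-measurable $\lusim{\Sigma}_2^1(a)$ set from the hypothesis that the reals random over $L[\a]$ form a null set.
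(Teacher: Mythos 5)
The paper states this theorem without proof (it is quoted as a background fact after the single-random-real results), so your proposal can only be judged on its own merits. Your direction (b)$\Rightarrow$(a) is the standard Solovay approximation argument and is essentially correct as sketched: the maximal antichain of conditions of $L[a]$ forcing $\lusim{g}\in A$ is countable by the $c.c.c.$ computed in $L[a]$, its union $B$ is a Borel set coded in $L[a]$, and the three verification points you single out --- absoluteness of the random-forcing relation, the relativized genericity lemma giving that $L[a][x]$ is a genuine generic extension when $x$ is random over $L[a]$, and Shoenfield transfer of the $\lusim{\Sigma}_2^1(a)$ statement between $L[a][x]$ and $V$ --- are exactly the right ones and all go through; then $A\triangle B$ is contained in the non-randoms over $L[a]$, which (b) makes null.

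The genuine gap is the endgame of (a)$\Rightarrow$(b), the step you yourself flag as ``secondary'': ruling out $\mu(N_a)=1$ by selecting ``in a $\lusim{\Sigma}_2^1(a)$ way'' a subfamily of the sections $\{x: c(x)=c\}$ with inner measure $0$ and outer measure $1$. No such definable selection is exhibited, and none is available at this level: a union over an arbitrary set $S$ of codes is $\lusim{\Sigma}_2^1(a)$ only when $S$ itself is suitably definable, and the Ulam-type argument that \emph{some} subfamily union must be non-measurable produces a non-definable set --- which contradicts nothing, since $ZFC$ proves outright that non-measurable sets exist; to contradict (a) the witness must be $\lusim{\Sigma}_2^1(a)$. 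The classical repair uses the objects you already introduced, but with a Fubini argument in place of the selection. If $\omega_1^{L[a]}<\omega_1$ there are only countably many null codes in $L[a]$ and $N_a$ is null outright. Otherwise proper initial segments of the canonical wellordering of the reals of $L[a]$ are countable in $V$, so the planar set $E=\{(x,y)\in N_a\times N_a : c(x) <_{L[a]} c(y)\}$, which is $\lusim{\Sigma}_2^1(a)$ by the good wellordering, has every vertical section $\{x: c(x)<_{L[a]} c(y)\}$ null (a countable union of null sets) and every horizontal section equal to $N_a$ minus a null set. If $\mu(N_a)=m>0$, then $E$ and $N_a$ are measurable by (a) (transferred to the plane via a measure-preserving Borel isomorphism, a routine point worth stating), and Fubini computes $\mu(E)=0$ from the vertical sections but $\mu(E)=m^2>0$ from the horizontal ones --- a contradiction, so $\mu(N_a)=0$. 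Note this argument needs no zero-one law, so your translation-invariance detour, while correct, is superfluous; the rest of your (a)$\Rightarrow$(b), in particular the computation that $N_a$ is $\lusim{\Sigma}_2^1(a)$ using that the reals of $L[a]$ form a $\lusim{\Sigma}_2^1(a)$ set, is fine.
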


\section{Collapsing} The set of conditions $\Col(\aleph_0, \aleph_1)$  is
\begin{center}
$\{p: p$ is a function from a finite subset of $\aleph_0$ into $\aleph_1 \}.$
\end{center}
ordered by reverse inclusion.
\begin{lemma}
In the generic extension, there is an onto map from $\aleph_0 \rightarrow \aleph_1.$ Also other cardinals remain cardinals (because $|\Col(\aleph_0, \aleph_1)|\leq\aleph_1$).
\end{lemma}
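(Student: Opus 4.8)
The plan is to extract the collapsing surjection directly from the generic filter and then to read off cardinal preservation from the size of the poset. These are two independent arguments, and I would keep them separate.

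First I would set $g = \bigcup G$ and observe that $g$ is a function: since any two conditions of $G$ have a common extension in $G$ (as noted just after the definition of genericity), any two elements of $G$ agree on the intersection of their domains, so their union is single-valued. To see that $\dom(g) = \aleph_0$, for each $n < \aleph_0$ I would consider $D_n = \{p \in \Col(\aleph_0,\aleph_1) : n \in \dom(p)\}$; this set lies in $V$ and is dense, because any condition $p$ with $n \notin \dom(p)$ is extended by $p \cup \{(n,0)\}$. By genericity $G \cap D_n \neq \emptyset$, so $n \in \dom(g)$. To see that $g$ is onto, for each $\alpha < \aleph_1$ consider $E_\alpha = \{p : \alpha \in \range(p)\}$; given any $p$, its domain is finite, so there is some $n < \aleph_0$ with $n \notin \dom(p)$, and $p \cup \{(n,\alpha)\}$ is a condition extending $p$ with $\alpha$ in its range. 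Hence $E_\alpha$ is dense and in $V$, so $G \cap E_\alpha \neq \emptyset$ and $\alpha \in \range(g)$. This yields a surjection $g : \aleph_0 \to \aleph_1$ in $V[G]$, so $\aleph_1^V$ becomes countable in $V[G]$.

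For the second assertion I would first bound the size of the poset: a condition is determined by a finite subset of $\aleph_0$ together with a function from it into $\aleph_1$, and there are only $\aleph_0$ finite subsets of $\aleph_0$ and at most $\aleph_1$ functions into $\aleph_1$ on each, so $|\Col(\aleph_0,\aleph_1)| = \aleph_0 \cdot \aleph_1 = \aleph_1$. In particular every antichain has size at most $\aleph_1 < \aleph_2$, so $\Col(\aleph_0,\aleph_1)$ satisfies the $\aleph_2$-c.c. Since $\aleph_2$ is regular, the earlier $\kappa$-c.c. theorem applies with $\kappa = \aleph_2$ and shows that all cardinals $\geq \aleph_2$ are preserved. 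Together with the trivial preservation of $\aleph_0$, this means the only ground-model cardinal that ceases to be a cardinal is $\aleph_1$, which is exactly what the first part collapses.

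The computations here are entirely routine; the only point requiring a little care is the logical direction of cardinal preservation. The density arguments produce only an explicit surjection, which tells us $\aleph_1^V$ is collapsed but says nothing a priori about $\aleph_2^V$ and above; that information comes solely from the chain-condition argument, and it is essential that the bound $\aleph_1$ on antichains falls strictly below $\aleph_2$, so that the regular cardinal to which the theorem is applied is $\aleph_2$ rather than $\aleph_1$. Thus the main (if mild) subtlety is keeping the two halves disjoint: genericity collapses $\aleph_1$, while the size bound protects everything from $\aleph_2$ onward.
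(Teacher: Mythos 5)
Your proof is correct and follows exactly the route the paper intends: the paper states this lemma without proof, and its parenthetical hint ($|\Col(\aleph_0,\aleph_1)|\leq\aleph_1$) is precisely your observation that every antichain has size at most $\aleph_1<\aleph_2$, so the $\kappa$-c.c.\ preservation theorem of Chapter~1 applies with $\kappa=\aleph_2$. Your density arguments producing the surjection $g:\aleph_0\to\aleph_1$ are the standard ones the paper leaves implicit, so there is nothing to add.
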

\begin{theorem}
($CH$) In $V[G],$ almost all reals are random over $V$.
\end{theorem}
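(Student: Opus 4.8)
The plan is to identify explicitly the set of reals of $V[G]$ that fail to be random over $V$ and to show that this set is null. Recall from the single random real discussion that a real $r$ is random over $V$ exactly when it avoids every $G_\delta$ set of measure zero coded in $V$; equivalently, when it avoids every Borel set of measure zero coded in $V$, since in $V$ each null Borel set is contained in a null $G_\delta$ set whose code again lies in $V$. Hence the non-random reals of $V[G]$ are precisely
\begin{center}
$N=\bigcup\{B_c : c\in V \text{ codes a Borel set with } \mu(B_c)=0\}$,
\end{center}
where $B_c$ denotes the Borel set coded by $c$, reinterpreted in $V[G]$. The goal reduces to showing $\mu(N)=0$ in $V[G]$.

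First I would use the hypothesis $CH$ in $V$. A Borel code is (coded by) a real, so under $CH$ the collection of Borel codes lying in $V$ has cardinality at most $\aleph_1^V$. By the Collapsing Lemma of this section, forcing with $\Col(\aleph_0,\aleph_1)$ adds a surjection of $\aleph_0$ onto $\aleph_1^V$, so $\aleph_1^V$ is countable in $V[G]$. Consequently the set of Borel codes belonging to $V$ is countable in $V[G]$, and we may enumerate the null ones there as $(c_n : n<\omega)$, so that $N=\bigcup_{n<\omega} B_{c_n}$. Next I would invoke the absoluteness of measure for Borel codes: if $c\in V$ codes a null set in $V$, then the set $B_c$ recomputed in $V[G]$ is still null. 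This is where the coding of Borel sets together with the Shoenfield absoluteness assumed at the outset does the work, since the assertion that the set coded by $c$ is null is absolute between $V$ and $V[G]$. Granting this, in $V[G]$ the set $N$ is a countable union of null sets, whence $\mu(N)=0$; therefore its complement, the set of reals random over $V$, has full measure, which is exactly the claim.

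The main obstacle is the absoluteness step. One must ensure both that a null Borel set of $V$ cannot acquire positive measure in $V[G]$ and that the definition of randomness does not let new null sets enter the picture. The first point is handled by the absoluteness of the statement $\mu(B_c)=0$ via the theory of Borel codes; the second is settled by the definition of randomness over $V$ itself, which quantifies only over sets coded in $V$, so no code outside $V$ is relevant. Once the countability of the $V$-codes in $V[G]$ is combined with this absoluteness, the conclusion that almost every real of $V[G]$ is random over $V$ is immediate.
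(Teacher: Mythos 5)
Your proposal is correct and takes essentially the same approach as the paper, whose entire proof is the one-line observation that the Borel sets of zero measure coded in $V$ form a countable set (in $V[G]$, after the collapse). You have merely made explicit the steps the paper leaves implicit: the $CH$ count of the codes, the collapse of $\aleph_1^V$ to $\aleph_0$, and the absoluteness of ``$\mu(B_c)=0$'' for Borel codes via the coding and Shoenfield absoluteness machinery assumed at the outset.
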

\begin{proof}
The Borel sets of zero measure coded in $V$ form a countable set.
\end{proof}

\section{Amoeba forcing} The set of conditions this time is
\begin{center}
$\{K: K$ is compact $ \subseteq \RR$ and $\mu(K)>1 \},$
\end{center}
ordered by inclusion.
\begin{lemma}
This set satisfies the $c.c.c.$
\end{lemma}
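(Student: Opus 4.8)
The plan is to mimic, almost verbatim, the earlier lemma showing that random forcing $\RR$ has the $c.c.c.$ First I would record the compatibility criterion for this poset. Since the order is inclusion, a common extension of two conditions $K$ and $C$ is a compact set of measure $> 1$ contained in both, hence contained in $K \cap C$; conversely $K \cap C$ is itself compact, so $K$ and $C$ are compatible precisely when $\mu(K \cap C) > 1$, with $K \cap C$ serving as the witnessing common lower bound. This reduces the whole problem to controlling when two conditions have intersection of measure above $1$.

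Next I would reuse the semi-metric $d(K, C) = \mu(K \triangle C)$ from the random forcing proof. Exactly as there, the finite unions of closed intervals with rational endpoints are dense in the associated topology; discarding those of measure $\le 1$ leaves a countable family $\{V_m\}$ of genuine conditions that is still dense among conditions, since any $K_0$ with $\mu(K_0) > 1$ is approximated in $d$ by such a $V_m$ which, being close to $K_0$, still has measure $> 1$.

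The heart of the argument is the same proximity-implies-compatibility estimate, but with the threshold shifted from $0$ to $1$. If $K$ and $C$ both lie within $\epsilon$ of some $K_0$, then
\[
\mu(K \cap C) \;\ge\; \mu(K_0) - \mu(K_0 \setminus K) - \mu(K_0 \setminus C) \;>\; \mu(K_0) - 2\epsilon,
\]
so whenever $2\epsilon < \mu(K_0) - 1$ we get $\mu(K \cap C) > 1$, i.e. $K$ and $C$ are compatible. Letting $K_0$ range over the countable dense family $\{V_m\}$ and $\epsilon$ over small rationals with $2\epsilon < \mu(V_m) - 1$, the balls $\mathcal{C}_{m,\epsilon} = \{K : \mu(K) > 1,\ d(K, V_m) < \epsilon\}$ form a countable collection of pairwise-compatible sets of conditions which together cover the whole poset.

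Finally, any antichain meets each $\mathcal{C}_{m,\epsilon}$ in at most one element and is contained in their union, so every antichain is countable, giving the $c.c.c.$ I expect the only real care needed to be the bookkeeping in the middle step, namely ensuring that the countable dense family consists of legitimate conditions of measure strictly above $1$ and that the chosen $\epsilon$ genuinely satisfies $2\epsilon < \mu(V_m) - 1$; the measure-theoretic estimate itself is identical to the one already carried out for random forcing, so no new idea is required.
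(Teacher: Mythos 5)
Your proof is correct and follows exactly the route the paper intends: the paper's entire proof of this lemma is the remark that it is ``very similar to the case of random forcing,'' and you have carried out precisely that adaptation, reusing the semi-metric $d(K,C)=\mu(K\triangle C)$ and the countable dense family of finite unions of intervals, with the compatibility threshold shifted so that $2\epsilon < \mu(K_0)-1$ forces $\mu(K\cap C)>1$, where $K\cap C$ itself (being compact) witnesses compatibility. The bookkeeping you flag at the end --- keeping only approximants of measure strictly above $1$ and choosing rational $\epsilon$ with $2\epsilon < \mu(V_m)-1$ --- does go through, so nothing is missing.
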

\begin{proof}
Very similar to the case of random forcing.
\end{proof}
\begin{theorem}
The intersection of the compact sets of the generic is a compact set of measure $1$ consisting of random reals.
\end{theorem}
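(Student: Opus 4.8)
The plan is to analyze the generic object $A=\bigcap\{K : K\in G\}$, where $G$ is generic for the amoeba forcing $\mathbb{A}$; as in the random case I read $A$ inside $V[G]$ by replacing each coded compact set by its closure, so that $A$ is a closed and bounded, hence compact, subset of $\RR$ in $V[G]$. Three things must be established: $\mu(A)\le 1$, $\mu(A)\ge 1$, and that every real of $A$ is random over $V$. Each will follow from a density argument, using throughout that the measure of a ground-model Borel code is absolute between $V$ and $V[G]$.

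For the upper bound I would show that for each rational $\epsilon>0$ the set $D_\epsilon=\{K : \mu(K)<1+\epsilon\}$ is dense: given a condition $K_0$ with $\mu(K_0)>1$, the function $t\mapsto \mu(K_0\cap(-\infty,t])$ is continuous (Lebesgue measure is nonatomic) and runs from $0$ to $\mu(K_0)$, so some $t$ yields a compact set $K=K_0\cap(-\infty,t]\subseteq K_0$ with $1<\mu(K)<1+\epsilon$. Meeting each $D_\epsilon$ forces $A\subseteq K$ for conditions of measure arbitrarily close to $1$, whence $\mu(A)\le 1$.

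For the lower bound, suppose toward a contradiction that $\mu(A)<1$ in $V[G]$. Since $A$ is compact, regularity of the measure gives an open $U\supseteq A$ with $\mu(U)<1$, and compactness of $A$ reduces this to a cover by finitely many rational open intervals of total measure $<1$; their union $V$ is finite rational data, hence coded in $V$, and $A\subseteq V$. Now I use the crucial inequality: for the closed complement $F=\RR\setminus V$ and any condition $K$ one has $\mu(K\cap F)\ge \mu(K)-\mu(V)>1-1=0$, so $K\cap F$ is a nonempty compact set for every $K\in G$. Because $G$ is directed, the family $\{K\cap F : K\in G\}$ has the finite intersection property, so by compactness $A\cap F=\bigcap_{K\in G}(K\cap F)\neq\emptyset$, contradicting $A\subseteq V$. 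Hence $\mu(A)\ge 1$, and with the previous step $\mu(A)=1$.

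Finally, to see that every $x\in A$ is random over $V$, I invoke the earlier characterization: $x$ is random over $V$ iff $x$ avoids every null $G_\delta$ set coded in $V$. Fix such a set $N=\bigcap_n U_n$ with the $U_n$ open, decreasing, $\mu(U_n)\to 0$, all coded in $V$. The set $D_N=\{K : \exists n\ (K\cap U_n=\emptyset)\}$ is dense, since given $K_0$ I pick $n$ with $\mu(U_n)<\mu(K_0)-1$ and set $K=K_0\setminus U_n$, a compact subset of $K_0$ with $\mu(K)\ge\mu(K_0)-\mu(U_n)>1$. Meeting $D_N$ yields $K\in G$ and $n$ with $K\cap U_n=\emptyset$, so $A\subseteq K$ gives $A\cap N\subseteq A\cap U_n=\emptyset$; as $N$ was arbitrary, no real of $A$ lies in a ground-model null $G_\delta$, i.e. every real of $A$ is random over $V$. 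The main obstacle, and the step I would treat most carefully, is the lower bound: one must legitimately replace the witnessing open cover (computed in $V[G]$) by a ground-model finite union of rational intervals, and then convert directedness of $G$ into nonemptiness of the intersection via compactness.
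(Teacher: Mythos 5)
Your proof is correct and follows essentially the same route as the paper: a dense-set argument showing any condition can be shrunk (staying of measure $>1$) off a ground-model null set, which gives randomness, and the same reduction of a covering open set to a finite union of rational intervals followed by the finite-intersection-property/compactness argument for $\mu(A)\geq 1$. The only difference is that you also verify the upper bound $\mu(A)\leq 1$ via the dense sets $D_\epsilon$, a step the paper leaves implicit.
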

\begin{proof}
We prove it consists of random reals. Let $B$ be a Borel set of zero measure coded in the ground model. $\{K: K\cap B=\emptyset \}$ is dense. This gives the result.

To prove that the measure of the intersection is at least $1$, assume on the contrary it is $\leq 1-\delta.$ Some open set $U$ covers the intersection with $\mu(U) \leq 1-\delta/2,$ and it can be replaced by a finite union of open intervals $U_0$.
Now $\mu(K\setminus U_0) > \delta/2,$ for any $K$ in $G$. Hence $\bigcap\{K\setminus U_0: K\in G  \}\neq \emptyset,$ by compactness.
\end{proof}

\section{The covering forcing} We force with the set of pairs $(k, f)$ such that
\begin{enumerate}
\item $k$ is an integer,
\item $f$ is a function from $\omega$ into the finite subsets of $\omega$ such that $\forall n, |f(n)|\leq n,$ and $|f(n)|$ is bounded.
\end{enumerate}
$(l, g)\leq (k,f)$ iff
\begin{enumerate}
\item $l\geq k,$
\item $g\upharpoonright k= f\upharpoonright k,$
\item $\forall n, g(n) \supseteq f(n).$
\end{enumerate}
It is easily seen that the $c.c.c.$ holds. Let $G$ be generic and  let $\Phi$, the map from $\omega$ into the finite subsets of $\omega$, obtained from the generic set.
\begin{lemma}
$(a)$ $|\Phi(n)|\leq n,$

$(b)$ Any element $\a$ of $\omega^\omega$ of the ground model is eventually covered by $\Phi,$ i.e. $\exists p \forall n\geq p, \a(n)\in \Phi(n).$
\end{lemma}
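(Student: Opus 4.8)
The plan is to make the generic object explicit and then dispatch each clause by a density argument. Writing $G$ for the generic filter, set $\Phi(n) = \bigcup\{f(n) : (k,f) \in G \text{ for some } k\}$; this is the natural reading of the map ``obtained from the generic set.'' First I would record the basic compatibility fact on which everything rests: any two conditions $(k,f),(l,g)\in G$ have a common lower bound $(m,h)\in G$, and from the ordering this $h$ satisfies $h\upharpoonright k = f\upharpoonright k$, $h\upharpoonright l = g\upharpoonright l$, and $h(n)\supseteq f(n),\, h(n)\supseteq g(n)$ for all $n$. In particular $f(n)=g(n)$ whenever $n$ is below both stems.

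For part $(a)$ I would first show that $D_n = \{(k,f) : k > n\}$ is dense: given any $(k,f)$, the pair $(\max(k,n+1),f)$ extends it, since only the integer coordinate grows and $f(m)\supseteq f(m)$ holds trivially. Hence $G\cap D_n\neq\emptyset$, so some $(k,f)\in G$ has stem $k>n$. Now for an arbitrary $(l,g)\in G$, taking a common lower bound $(m,h)$ gives $g(n)\subseteq h(n)=f(n)$ (the last equality because $n<k$), so $\Phi(n)=f(n)$. Since $(k,f)$ is a condition we have $|f(n)|\le n$, giving $|\Phi(n)|\le n$.

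For part $(b)$, fix $\a\in\om^\om\cap V$ and consider $D_\a = \{(k,f) : \forall n \ge k,\ \a(n)\in f(n)\}$. The heart of the matter is showing $D_\a$ is dense. Given $(k_0,f_0)$, let $N$ bound $|f_0(n)|$ (such $N$ exists by the defining requirement that $|f_0(n)|$ be bounded), put $l=\max(k_0,N+1)$, and define $g$ by $g\upharpoonright k_0 = f_0\upharpoonright k_0$, then $g(n)=f_0(n)$ for $k_0\le n<l$, and $g(n)=f_0(n)\cup\{\a(n)\}$ for $n\ge l$. Then $(l,g)\le(k_0,f_0)$ and $(l,g)\in D_\a$. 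I expect the cardinality bookkeeping to be the only delicate point: one must check $g$ is still legal, i.e.\ $|g(n)|\le n$ and $|g(n)|$ is bounded. Both hold because adding a single point keeps $|g(n)|\le N+1$, while the choice $l\ge N+1$ ensures $N+1\le n$ for all $n\ge l$; this is exactly where the boundedness hypothesis on $|f(n)|$ is used, and without it the density of $D_\a$ would fail. With density established, genericity yields $(l,g)\in G$ with $\a(n)\in g(n)\subseteq\Phi(n)$ for every $n\ge l$, so $p=l$ witnesses the conclusion.
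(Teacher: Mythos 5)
Your proof is correct and follows exactly the route the paper intends: the paper dismisses this lemma with ``proved by a simple density argument,'' and your write-up supplies precisely that argument, with the right dense sets for each clause and a correct verification that the constructed extension $(l,g)$ remains a legal condition. Your observation that the boundedness hypothesis on $|f(n)|$ is what makes $D_\a$ dense (otherwise $|f(n)|=n$ could block adding $\a(n)$ at infinitely many levels) is exactly the one delicate point, and you handle it correctly.
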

This is proved by a simple density argument.
\begin{theorem}
In $V[G],$ almost all reals are random over $V$.
\end{theorem}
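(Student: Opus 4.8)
The statement asserts that the set of reals of $V[G]$ failing to be random over $V$ is Lebesgue null. Since a real is non-random over $V$ precisely when it lies in some $G_\delta$ null set coded in $V$, that exceptional set is exactly the union, taken in $V[G]$, of all $V$-coded null sets. The plan is therefore to manufacture a single Borel null set $N^{*}\in V[G]$ containing every $V$-coded null set; the point of the covering forcing is that its generic map $\Phi$ lets one such $N^{*}$ swallow the uncountably many ground-model null sets at once.

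I would work in $2^{\omega}$ with the product measure, identified with $[0,1]$ up to a countable set. Fix in $V$ a partition of $\omega$ into consecutive finite blocks $J_{n}$ with $|J_{n}|=\ell_{n}$ growing quickly (say $\ell_{n}=3n$), and set $m_{n}=2^{\ell_{n}}=|2^{J_{n}}|$. For $h$ with $h(n)\subseteq 2^{J_{n}}$ write $N(h)=\{x:\exists^{\infty}n,\ x\restriction J_{n}\in h(n)\}$; by Borel--Cantelli $N(h)$ is null as soon as $\sum_{n}|h(n)|/m_{n}<\infty$. The key input, which I would isolate as a lemma, is the standard combinatorial (Bartoszynski-type) description of the null ideal: for a suitable block sequence there is a bound $b_{n}$ with $\sum_{n} n\,b_{n}/m_{n}<\infty$ such that every null set coded in $V$ is contained in some $N(h)$ with $h\in V$ and $|h(n)|\le b_{n}$ for all $n$. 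As each admissible $h(n)$ is one of finitely many subsets of $2^{J_{n}}$, I would fix in $V$ an enumeration and a decoding $j\mapsto D^{j}_{n}\subseteq 2^{J_{n}}$ with $|D^{j}_{n}|\le b_{n}$ (and $D^{j}_{n}=\emptyset$ for $j$ out of range), so that each such $h$ is coded by a ground-model function $\alpha_{h}\in\omega^{\omega}$.

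Now I would build $N^{*}$ from the generic. In $V[G]$ put $H(n)=\bigcup_{j\in\Phi(n)}D^{j}_{n}\subseteq 2^{J_{n}}$. Since $|\Phi(n)|\le n$ by part $(a)$ of the Lemma and $|D^{j}_{n}|\le b_{n}$, one has $|H(n)|\le n\,b_{n}$, hence $\sum_{n}|H(n)|/m_{n}\le\sum_{n} n\,b_{n}/m_{n}<\infty$, so $N^{*}:=N(H)$ is a null set of $V[G]$. For containment, let $X$ be any null set coded in $V$, pick $h\in V$ with $X\subseteq N(h)$ and $|h(n)|\le b_{n}$, and let $\alpha_{h}\in V\cap\omega^{\omega}$ be its code. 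By part $(b)$ of the Lemma the generic $\Phi$ eventually covers $\alpha_{h}$: there is $p$ with $\alpha_{h}(n)\in\Phi(n)$ for all $n\ge p$. Then $h(n)=D^{\alpha_{h}(n)}_{n}\subseteq H(n)$ for $n\ge p$, so any point lying in $\{x:x\restriction J_{n}\in h(n)\}$ for infinitely many $n$ lies in $\{x:x\restriction J_{n}\in H(n)\}$ for infinitely many $n$; that is $X\subseteq N(h)\subseteq N^{*}$. Hence $N^{*}$ contains every $V$-coded null set, their union is null, and almost every real of $V[G]$ is random over $V$.

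The only non-routine ingredient is the block characterization used in the second step: that every ground-model null set can be captured, over one fixed fast-growing block partition, by an $N(h)$ whose fibres obey a uniform size bound $b_{n}$ compatible with $\sum_{n} n\,b_{n}/m_{n}<\infty$. Proving this -- essentially Bartoszynski's combinatorial form of the null ideal, got by covering a null set with open sets of rapidly shrinking measure and regrouping the pieces block by block -- is where I expect the real work to be. Once it is available, the covering Lemma (which guarantees that $\Phi$ simultaneously catches the code of every ground-model null set) together with Borel--Cantelli finishes the argument mechanically.
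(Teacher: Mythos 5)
Your overall architecture matches the paper's exactly: use the generic $\Phi$ (parts $(a)$ and $(b)$ of the covering lemma) to assemble, via Borel--Cantelli, a single null set of $V[G]$ that swallows every ground-model null set, each such set being handed to $\Phi$ through a ground-model code $\alpha\in\omega^\omega$. But your combinatorial core --- the ``key input'' lemma --- is false, and this is a genuine gap, not deferred routine work. Over a partition $\langle J_n\rangle$ fixed in advance in $V$, it is not true that every $V$-coded null set is contained in some $N(h)$ with $\sum_n |h(n)|/m_n<\infty$, no matter how fast the blocks grow and regardless of the bounds $b_n$. Concretely, take the ground-model null set $A=\{x\in 2^\omega : \forall n\ x(\min J_n)=0\}$, one frozen bit per block, and put on $A$ its natural conditional product measure $\nu$. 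For any $h$ with $h(n)\subseteq 2^{J_n}$, the events $E_n=\{x : x\upharpoonright J_n\in h(n)\}$ are $\nu$-independent (they read disjoint blocks) and $\nu(E_n)\le 2|h(n)|/m_n$; so if $\sum_n |h(n)|/m_n<\infty$, Borel--Cantelli gives $\nu(\limsup_n E_n)=0$, whereas $A\subseteq N(h)$ would force $\nu(\limsup_n E_n)=\nu(A)=1$. Hence no summable $N(h)$ over your fixed partition contains $A$, and your $N^*$ misses some $V$-coded null sets. This is exactly why Bartoszynski-type characterizations let the interval partition depend on the null set (or decompose a null set into two small sets over different partitions); a single partition fixed in advance never suffices.

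The paper's lemma sidesteps precisely this obstruction: its covering sets $W_n$ are arbitrary basic sets (finite unions of intervals with rational endpoints) of measure $<1/2^n$, with no requirement that $W_n$ read only one coordinate block --- so a single $W_n$ may constrain many blocks at once, which is what the set $A$ above demands. Given a null set, the paper covers it by open sets $U_p$ of measure $<2^{-\theta(p,0)}$, splits each into intervals, regroups the tails into basic sets $\tilde{V}_{p,q}$ of measure $<2^{-\theta(p,q)}$, and observes that each point of the null set lies in some $\tilde{V}_{p,q}$ for every $p$, hence in infinitely many $W_n$. Your argument is salvageable by exactly this substitution: enumerate in $V$ the basic sets of measure $<1/2^n$ as $W_{n,i}$, take the generic-indexed null set to be $\overline{\lim}_n \bigcup_{i\in\Phi(n)}W_{n,i}$ (the $n$-th union has measure $\le n/2^n$, summable), and run your covering step with $\alpha(n)$ the index supplied by the paper's regrouping lemma. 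Everything downstream of your key lemma --- the use of $(a)$ and $(b)$, the coding by $\alpha$, the limsup containment --- then goes through verbatim.
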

\begin{proof}
We need a lemma.
\begin{lemma}
$(ZFC)$ Given a set $A$ of measure $0$, there exists a sequence of basic sets (i.e. finite union of open intervals with rational endpoints) $W_n$ such that

$(a)$ $A \subseteq \overline{\lim W_n},$

$(b)$ $\mu(W_n) < 1/2^n.$
\end{lemma}
\begin{proof}
Let $\theta: \omega\times\omega \rightarrow \omega$ be a bijection such that $\theta(p,q)> p,$ except for $p=q=0.$ We then pick up a sequence of open sets $U_p \supseteq A$ with $\mu(U_p) < 1/2^{\theta(p,0)}$. $U_p$ can be written as a disjoint union of intervals which we enumerate as $I_{p, l}.$ We then define by induction on $q$ integers $l_{p,q}$ in such a way that $l_{p,0}=0$ and $\mu(\bigcup_{r\geq l_{p,q}} I_{p,r}) < 1/2^{\theta(p,q)}.$

Let $V_{p,q}$ be $\bigcup \{ I_{p,r}: l_{p,q} \leq r < l_{p, q+1}  \}.$ We get $\mu(V_{p,q}) < 1/2^{\theta(p,q)}.$ So we can slightly extend $V_{p,q}$ in order to get a basic set $\tilde{V}_{p,q}$ satisfying the same inequality.

Clearly any $\a$ in $A$ belongs to some $\tilde{V}_{p,q},$ for fixed $p$; hence to infinitely many of them. We finally let $W_n=\tilde{V}_{p,q}$ if $\theta(p,q)=n.$
\end{proof}
\begin{remark}
By the Borel-Cantelli lemma, it follows that $\overline{\lim W_n}$ has measure $0$.
\end{remark}
We now complete the proof of the theorem. Let $W_{n,i}$ be an enumeration of basic sets of measure $<1/2^n.$ If $\Phi$ is given by the generic, we consider $\bigcup_{i\in \Phi(n)} W_{n,i}.$

Now if $A$ is a Borel set of zero measure, there is by Lemma, an $\a: \omega \rightarrow \omega$ in $V$ such that $A \subseteq \overline{\lim W_{n, \a(n)}}.$ Hence because $\a$ is almost contained in $\Phi,$ we get  $A \subseteq \overline{\lim \bigcup_{i\in \Phi(n)} W_{n,i}}.$

But $\bigcup_{i\in \Phi(n)} W_{n,i}$ has measure $\leq n/2^n.$ Hence $A$ is  included in a fixed zero measure set of $V[G]$.
\end{proof}

\chapter{Cohen forcing}
\section{Adding one Cohen real} Let's start with definition of a new forcing notion.
$\PP$ here is the set of open intervals with rational endpoint. This set of conditions is countable, hence all cardinals of the ground model remain cardinals. Let $G$ be $\PP$-generic over $V$.

\begin{lemma}
There is a single real $g$ which belongs to all intervals $(r,s)$ with $(r,s)\in G.$
\end{lemma}
\begin{proof}
Let $\a=sup\{r: (r,s)\in G \}$ and $\beta=inf\{s: (r,s)\in G       \}.$ First of all note that $\a\leq \beta,$ as otherwise some conditions $(r_1, s_1),(r_2, s_2)$ of $G$ are such that $r_1>s_2.$ This contradicts compatibility. Now if $\a<\b,$ then we pick $q\in \QQ$ such that $\a < q < \beta,$ and we use the dense set $D_q$ defined by
\begin{center}
$D_q=\{(s,t): (s,t) \subseteq (-\infty, q)$ or $(s,t) \subseteq (q, +\infty)    \}.$
\end{center}
Once a condition of $G$ is in $D_q$, it will get $\a, \b < r$ or $r<\a,\b,$ contradiction. Thus $\a=\b,$ which we denote $g$.
\end{proof}
\begin{lemma}
The real $g$ does not belong to any  closed nowhere dense set coded in $V$.
\end{lemma}
\begin{remark}
Such a real is called Cohen generic.
\end{remark}
\begin{proof}
Let $F$ be such a set. If $p$ is given, then $p$ is an open set and $p\setminus F$ is open and $\neq\emptyset,$ hence $q\leq p$ can be found disjoint from $F$. Hence $\exists q\in G, q\cap F=\emptyset.$ The lemma follows.
\end{proof}
Conversely if $g$ is given, then the set of intervals including $g$ can be constructed and denoted by $\tilde{g}$
\begin{theorem}
$\tilde{g}$ is generic iff $g$ does not belong to any nowhere dense closed set of $V$.
\end{theorem}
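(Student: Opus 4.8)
The plan is to prove the two implications separately, mirroring the structure of the earlier random-forcing lemma where only one implication required work. The forward direction I expect to get immediately from the density argument used in the preceding lemma: if $\tilde{g}$ is generic and $F$ is a closed nowhere dense set coded in $V$, then the set $D_F = \{q \in \PP : q \cap F = \emptyset\}$ is dense, since for any condition $p$ the set $p \setminus F$ is open and nonempty (as $F$ cannot contain the open interval $p$), so some rational subinterval $q \subseteq p \setminus F$ lies in $D_F$. Genericity then yields $q \in \tilde{g} \cap D_F$; but $q \in \tilde{g}$ means $g \in q$, while $q \cap F = \emptyset$, so $g \notin F$. Since $F$ is arbitrary, $g$ avoids every such set.

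For the converse I would assume $g$ lies in no closed nowhere dense set of $V$ and verify the three genericity conditions for $\tilde{g} = \{(r,s) : g \in (r,s)\}$. Conditions (1) and (2) are formal: if $g \in p$ and $p \subseteq q$ then $g \in q$, and if $g \in p$ and $g \in q$ then $g$ lies in the nonempty open set $p \cap q$, so $p$ and $q$ are compatible. All the content is in condition (3).

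So I would fix a dense $D \in V$ and, working in $V$, choose a maximal antichain $A \subseteq D$; since incompatibility of open intervals means disjointness, $A$ is a family of pairwise disjoint intervals. The key claim is that $U = \bigcup A$ is dense open in $V$. Indeed, if $U$ were not dense, some rational interval $I$ would be disjoint from every member of $A$; treating $I$ as a condition and using density of $D$ to pick $d \in D$ with $d \subseteq I$ would produce an element of $D$ incompatible with every member of $A$, contradicting maximality. Hence $F = \RR \setminus U$ is a closed nowhere dense set coded in $V$.

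Finally the hypothesis applies to this particular $F$: since $g \notin F$, we have $g \in U$, so $g$ belongs to some interval $a \in A \subseteq D$, whence $a \in \tilde{g} \cap D$ and condition (3) holds. I expect the only genuine obstacle to be the density-of-$U$ claim, which rests on reading incompatibility of conditions as disjointness of intervals and on the maximality of $A$; the remaining verifications are routine.
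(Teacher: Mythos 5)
Your proposal is correct and takes essentially the same route as the paper: the paper likewise disposes of the forward direction and conditions (1), (2) as routine, and proves condition (3) by observing that the union $X$ of the intervals in $D$ is a dense open set coded in $V$ (via the same density-of-$D$ argument you give), so that $g$ avoids the closed nowhere dense complement and hence lies in some interval of $D$. The only difference is cosmetic: the paper takes the union over all of $D$ directly, so your detour through a maximal antichain $A \subseteq D$ --- which was genuinely needed in the random-forcing analogue to get a countable union --- is unnecessary here, though harmless.
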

\begin{proof}
Let $D$ be a dense set; the union of the intervals in $D$ is an open set $X$.
\begin{claim}
It is dense.
\end{claim}
\begin{proof}
Otherwise, some interval $(r,s)$ is disjoint from it, but there is $(r_0,s_0) \subseteq (r,s)$ such that $(r_0,s_0)\in D,$ but then $(r_0,s_0) \subseteq X,$ contradiction.
\end{proof}
Now $g$ belongs to $X$, so it belongs to some interval of $D$.
\end{proof}
It should be noted that if one goes from $G$ to $g$, and back to $\tilde{g},$ we get $G \subseteq \tilde{g},$ hence $G=\tilde{g}.$

\begin{lemma}
Any real of $V[G]$ is the value of $g$ of a Borel measurable function.
\end{lemma}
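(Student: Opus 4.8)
The plan is to copy, essentially verbatim, the argument already used for the corresponding statement about random forcing, replacing compact sets of positive measure by rational open intervals and ``positive-measure difference'' by ``nonempty intersection'' as the notion of compatibility. First I would reduce to the case $x\in[0,1]$ by subtracting off an integer (the integer part of $x$ is itself read off $g$ by an analogous, even simpler, Borel function, so no generality is lost). Next I would fix a condition $p_0\in G$ with $p_0\Vdash$``$\lusim{x}$ is a real of $[0,1]$''. For each rational $q\in\QQ\cap[0,1]$ I would choose a maximal antichain $A_q$ among the conditions $p\le p_0$ forcing $\lusim{x}<q$. Because $\PP$ is countable every $A_q$ is countable, and since its members are pairwise disjoint open intervals the set $X_q=\bigcup\{p:p\in A_q\}$ is an open subset of $\RR$. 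I then set $\Phi_q$ equal to $q$ on $X_q$ and to $1$ off $X_q$, and define $\Phi=\inf_{q\in\QQ}\Phi_q$. Each $\Phi_q$ is Borel (a two-valued function constant on an open set and on its closed complement), and a countable infimum of Borel functions is Borel, so $\Phi$ is a Borel function lying in $V$.

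The remaining content is the claim that $\Phi(g)=x$, which I would prove by two inequalities. For $\Phi(g)\le x$: if not, pick a rational $q$ with $x<q<\Phi(g)$ and a condition $p\in G$ with $p\le p_0$ and $p\Vdash$``$\lusim{x}<q$''. The set $D$ of conditions that are either incompatible with $p$ or lie below both $p$ and some member of $A_q$ is dense; density is exactly where the maximality of $A_q$ is used, since any common extension of $p$ with an arbitrary condition still forces $\lusim{x}<q$ and so must meet $A_q$. Choosing $p'\in G\cap D$, the element $p'$ is compatible with $p$ (both lie in $G$), hence lies below some $a\in A_q$, so $p'\subseteq a\subseteq X_q$; as $g\in p'$ this gives $g\in X_q$ and $\Phi(g)\le\Phi_q(g)=q$, contradicting $q<\Phi(g)$.

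For $x\le\Phi(g)$: if instead $\Phi_q(g)<x$ for some $q$, then since $x\le 1$ the only possibility is $\Phi_q(g)=q$, i.e. $g\in X_q$, so $g$ belongs to some interval $a\in A_q$. Here I would invoke the already-established identity $G=\tilde{g}$: since $a$ is an interval coded in $V$ containing $g$, it lies in $\tilde{g}=G$. But $a\Vdash$``$\lusim{x}<q$'' together with $a\in G$ force $x<q=\Phi_q(g)<x$ in $V[G]$, a contradiction. Combining the two inequalities yields $\Phi(g)=x$.

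I expect the only genuinely delicate step to be the density of $D$ in the first inequality, where one must argue carefully that an arbitrary condition compatible with $p$ can be pushed below $p$ while still forcing $\lusim{x}<q$, and therefore cannot be incompatible with the whole maximal antichain $A_q$. Everything else is bookkeeping: the passage from $g\in a$ to $a\in G$ is immediate from $G=\tilde{g}$, measurability is automatic because the $X_q$ are open, and the reduction to $[0,1]$ is the same cosmetic device used in the random case.
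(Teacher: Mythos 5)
Your proof is correct and follows the paper's argument in all essentials: the same reduction to $[0,1]$, the same functions $\Phi_q$ and $\Phi=\inf_{q\in\QQ}\Phi_q$, the same two inequalities, and the same appeal to $G=\tilde{g}$ to pass from $g\in a$ to $a\in G$. The only divergence is that you build $X_q$ from a maximal antichain $A_q$ (imported from the random-forcing proof, where antichains are needed to keep the union Borel), whereas the paper simply takes $U_q$ to be the union of \emph{all} conditions forcing $\lusim{x}<q$ --- automatically open since the conditions are open intervals --- which makes the inequality $\Phi(g)\le x$ immediate and renders your density-of-$D$ argument unnecessary; both versions are sound.
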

\begin{proof}
We assume the given real $x$ belongs to $[0,1].$ Let
\begin{center}
$I_0\Vdash$``$\lusim{x}$ is a real of $[0,1]$''.
\end{center}
Then for any rational number $q$ we consider $\{I: I\Vdash$``$\lusim{x} < q$''$ \}.$ Taking the union of these conditions $I$ yields an open set $U_q$. We let $\Phi_q$ to be $q$ on $U_q$ and $1$ otherwise. the required function is $\Phi=\inf_{q\in \QQ}\Phi_q.$
\begin{claim}
The value of $\Phi$ at $g$ is $x.$
\end{claim}
\begin{proof}
It is easy to show that $x\geq \Phi(g).$ In the other direction, if $x > \Phi(g),$ then for some $q$, $x > \Phi_q(g)$. This means $g\in U_q$, hence for some $I$, $g\in I, I\Vdash$``$\lusim{x} < q$'', contradiction, because $\Phi_q(g)=q.$
\end{proof}
The lemma follows.
\end{proof}
\begin{corollary}
Any real is the value at $g$ of a continuous function of $V$ defined on a dense $G_\delta$ subset of $\RR$
\end{corollary}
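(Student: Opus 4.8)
The plan is to run the exact category-theoretic analogue of the random-real argument (the Theorem stated just above the Collapsing section), trading measure for Baire category throughout. For random forcing one upgraded a Borel measurable representative to a continuous one by means of Lusin's theorem, landing on a positive-measure compact set containing $g$; here the dual of Lusin's theorem produces a continuous restriction on a comeager (dense $G_\delta$) set, and the dual of ``the random real avoids null sets'' is ``the Cohen real avoids meager sets,'' which is precisely what places $g$ inside that set.

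Concretely, I would first apply the preceding lemma to fix a Borel measurable function $\Phi$ coded in $V$ with $x=\Phi(g)$. Then I would invoke the topological Lusin theorem: any Borel measurable $\Phi\colon\RR\to\RR$ becomes continuous after restriction to a suitable comeager set, and this set can be taken to be a dense $G_\delta$ set $D$ coded in $V$. It then remains only to verify that $g\in D$. For this I use that $g$ is Cohen generic: the complement $\RR\setminus D$ is a meager $F_\sigma$ set, hence a countable union $\bigcup_n F_n$ of closed nowhere dense sets, each coded in $V$. Since by the earlier lemma $g$ misses every closed nowhere dense set coded in $V$, it misses each $F_n$ and therefore their union, i.e.\ $g\in D$. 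Combining, $x=\Phi(g)=(\Phi\restricted D)(g)$, which exhibits $x$ as the value at $g$ of the continuous function $\Phi\restricted D$ defined on the dense $G_\delta$ set $D$, as required.

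The main obstacle is the topological Lusin theorem itself --- the assertion that a Borel (more generally, Baire-measurable) function is continuous on a comeager set --- together with the bookkeeping needed to guarantee that the resulting $G_\delta$ set, and hence its $F_\sigma$ complement, are coded in $V$, so that the genericity of $g$ can legitimately be applied to them. Once this measure/category dictionary is installed, the density argument that was invoked in the random case collapses here to the single observation that a Cohen real lies in every comeager Borel set of the ground model.
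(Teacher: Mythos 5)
Your proposal is correct and follows essentially the same route as the paper, whose one-line proof invokes exactly the fact you identify: a Borel measurable function of $V$ can be restricted to a dense $G_\delta$ set (coded in $V$) on which it becomes continuous. Your explicit verification that $g$ lies in that $G_\delta$ set --- writing its complement as a countable union of closed nowhere dense sets of $V$, each of which the Cohen real avoids --- is a step the paper leaves implicit, and you fill it in correctly.
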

\begin{proof}
This is because a Borel measurable function can be restricted to some dense $G_\delta$ subset $X$ so as to become continuous on $X$.
\end{proof}

Other properties of the model are given in the next lemma.
\begin{lemma}
$(a)$ $\RR^V$ does not have the Baire property.

$(b)$ $\RR^V$ is of zero measure.
\end{lemma}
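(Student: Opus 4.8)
The plan is to prove the two halves through the two dual ``smallness/largeness'' phenomena attached to a Cohen real: part $(b)$ expresses that $g$ is measure-small over $V$, while part $(a)$ rests on the fact that $g$ is category-generic, so that $\RR^V$ stays \emph{non-meager}. Throughout I regard $\RR^V$ as a subset of $\RR$ in $V[G]$; note that it is a subgroup of $(\RR,+)$ (indeed a subfield), and it is proper since $g\in\RR^{V[G]}\setminus\RR^V$. After reducing to the binary expansion of $g$ on $[0,1]$, I may treat $g$ as a Cohen-generic element of $2^\omega$ (the interval forcing of this section and the forcing $2^{<\omega}$ are equivalent), and fix once and for all an interval partition $\omega=\bigsqcup_n I_n$ with $|I_n|=n+1$, so that $\sum_n 2^{-|I_n|}<\infty$.

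For $(b)$ I would cover $\RR^V$ by a single null set defined from $g$. Put
\[
N=\{x\in 2^\omega : x\upharpoonright I_n = g\upharpoonright I_n \text{ for infinitely many } n\}.
\]
Since $\mu(\{x:x\upharpoonright I_n=g\upharpoonright I_n\})=2^{-|I_n|}$ and these measures are summable, the Borel--Cantelli lemma gives $\mu(N)=0$. On the other hand, for a fixed ground model real $x$ and each $m$, the set $D_{x,m}$ of conditions $p$ that agree with $x$ on some block $I_n\subseteq\dom p$ with $n\ge m$ is dense and lies in $V$; hence by genericity $g$ agrees with $x$ on infinitely many blocks, i.e.\ $x\in N$. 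Thus $\RR^V\subseteq N$ and $\RR^V$ has measure zero.

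For $(a)$ I would first invoke the category analogue of the Steinhaus theorem (Pettis / Banach--Kuratowski): a subgroup of $(\RR,+)$ with the Baire property is either meager or all of $\RR$. As $\RR^V$ is a proper subgroup, it suffices to show that $\RR^V$ is non-meager in $V[G]$; the Baire property is then impossible. This non-meagerness is the heart of the matter. Suppose, for contradiction, that some meager Borel set $M$ covers $\RR^V$. By the Lemma above, the Borel code of $M$ is $\Phi(g)$ for a Borel function $\Phi\in V$; writing $M_w$ for the meager set coded by $\Phi(w)$, some condition $p_0\in G$ forces ``$M_{\Phi(\dot g)}$ is meager and contains every ground model real.'' Working in $V$, consider the Borel set $R=\{(w,x): x\in M_w\}$ on $[p_0]\times\RR$. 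Because Cohen forcing forces a Borel statement about $\dot g$ exactly when that statement holds comeagerly, for \emph{every} real $x$ of $V$ the vertical section $R^x=\{w:x\in M_w\}$ is comeager in $[p_0]$; on the other hand each horizontal section $R_w=M_w$ is meager. By the Kuratowski--Ulam theorem the latter forces $R$ to be meager, and then, applying Kuratowski--Ulam in the other variable, comeagerly many $x$ have $R^x$ meager --- contradicting that \emph{all} $x\in\RR^V=\RR$ (computed in $V$) have $R^x$ comeager.

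The main obstacle is exactly this non-meagerness step: the easy covering argument of $(b)$ has no direct analogue for category (a single Cohen real does not meagerize $\RR^V$), so one must convert the escaping property of $g$ into a Fubini-type (Kuratowski--Ulam) contradiction. The minor technical points --- discarding the meager set of parameters $w$ for which $\Phi(w)$ fails to code a meager set, and the passage between the interval forcing and $2^{<\omega}$ --- I would handle by restricting $R$ to a comeager set of good $w$, which does not affect any of the category computations.
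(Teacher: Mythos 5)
Your proposal is correct, but it cannot coincide with the paper's argument for the simple reason that the paper states this lemma with no proof at all; you have supplied the standard proofs, and they fit the section's toolkit well. For $(b)$, your block-matching set $N=\{x: x\restriction I_n=g\restriction I_n \text{ i.o.}\}$ is exactly the classical argument: Borel--Cantelli gives $\mu(N)=0$, and the density of the sets $D_{x,m}$ (together with Lemma 3.1.2 and the paper's own discussion of the map $\theta:2^\omega\to[0,1]$, which licenses your passage to $2^{<\omega}$) puts every ground model real into $N$. For $(a)$, you correctly identify that the content is \emph{non-meagerness} of $\RR^V$ --- a meager cover does not exist, dually to Theorem 2.1.10$(a)$ for random forcing --- and your reduction via Pettis (a subgroup of $(\RR,+)$ with the Baire property is meager or everything, and $\RR^V$ is proper since $g\notin\RR^V$ by Lemma 3.1.2) is sound; note that Pettis can even be avoided here by an elementary argument: if $\RR^V$ had the Baire property and were non-meager it would be comeager on a rational interval, hence, being invariant under rational translations, comeager in $\RR$, and then $\RR^V\cap(g+\RR^V)\neq\emptyset$ would give $g\in\RR^V$, a contradiction. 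Your Kuratowski--Ulam argument for non-meagerness is the standard one and is correct: it leans on the paper's Lemma 3.1.5 (the code of the putative meager cover is $\Phi(g)$ with $\Phi\in V$ Borel) and on the transfer principle ``$p\Vdash$ a Borel statement about $\dot g$ iff the statement holds comeagerly on $[p]$,'' which you assert without proof; that principle is the one genuinely new ingredient, but it follows in a few lines from Lemma 3.1.2 plus the Baire property of Borel sets (if $B\cap[p]$ is not comeager, $[p]\setminus B$ is comeager on some $[q]$ with $q\leq p$, and a generic through $q$ lands in it), so in the context of a paper that freely uses Borel codes and absoluteness this is acceptable. Your handling of the technical points --- normalizing to meager $F_\sigma$ codes so that $R=\{(w,x):x\in M_w\}$ is Borel, and discarding the (comeagerly avoidable) bad parameters $w$ --- is also right, since the badness of $w$ is itself a Borel condition governed by the same transfer principle.
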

Note that $(a)$ implies that no real in $\RR^V$ is random: this is because a single random real makes $\RR^V$ meager.

\section{Adding Cohen reals side by side}
Let $\k$ be a cardinal $>\omega.$ We force with the set of functions $p$ with finite domain $\subseteq \k\times\omega$ into $\{0,1 \}.$
\begin{lemma}
This set has the $c.c.c.$
\end{lemma}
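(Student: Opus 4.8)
The plan is to reduce the c.c.c.\ to the combinatorial $\Delta$-system lemma, exactly as in the standard treatment of Cohen forcing. The first step is to record the compatibility criterion for this poset: two conditions $p,q$ are compatible precisely when they agree on the overlap of their domains, that is, when $p \restricted (\dom(p) \intersect \dom(q)) = q \restricted (\dom(p) \intersect \dom(q))$; in that case $p \union q$ is again a finite partial function from $\k \times \om$ into $\set{0,1}$ and serves as a common extension. Thus incompatibility forces a genuine disagreement at some point of the common domain, and this is the only feature of the ordering we shall exploit.

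Now suppose toward a contradiction that $A = \setof{p_\a}{\a < \om_1}$ is an antichain of size $\aleph_1$, and set $d_\a = \dom(p_\a)$, a finite subset of $\k \times \om$. Since $\setof{d_\a}{\a < \om_1}$ is an uncountable family of finite sets, the $\Delta$-system lemma yields an uncountable $S \sse \om_1$ together with a finite root $r$ such that $d_\a \intersect d_\b = r$ for all distinct $\a,\b \in S$. This is the step carrying the real combinatorial weight, and I expect it to be the crux of the whole argument.

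Having thinned out to the sunflower, I would finish by a pigeonhole on the root. Each $p_\a$ restricts to a function $p_\a \restricted r \func r \to \set{0,1}$, and there are only $2^{\power{r}}$ such functions, a finite number. Hence some single $s \func r \to \set{0,1}$ is taken by uncountably many $\a \in S$; choose any two distinct such indices $\a \neq \b$. By construction $d_\a \intersect d_\b = r$ and $p_\a, p_\b$ agree on $r$, so they agree on the whole of their common domain and are therefore compatible, contradicting that $A$ is an antichain. Once the $\Delta$-system lemma is in hand the agreement-on-the-root argument is entirely routine. The same scheme applies verbatim to the single-Cohen-real poset and, more generally, to finite-support products, which is why no separate combinatorial argument is needed in those cases.
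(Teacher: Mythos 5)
Your proof is correct and follows essentially the same route as the paper: assume an uncountable antichain, apply the $\Delta$-system lemma to the finite domains to extract an uncountable subfamily with a fixed root $r$, then thin once more so that all conditions agree on $r$ (you note the sharper point that there are only $2^{|r|}$ restrictions, where the paper merely says countably many), whence two conditions agree on their common domain and are compatible, a contradiction. No gaps; the argument matches the paper's proof of Lemma 3.2.1 step for step.
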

This is a consequence of the so called $\Delta-$lemma, which is a valuable tool in establishing $c.c.c.$
\begin{lemma}
($\Delta-$lemma) Let $\mathcal{W}$ be an uncountable collection of finite sets. there is an uncountable $\mathcal{Z} \subseteq \mathcal{W}$ and a finite set $S$, such that
\begin{center}
$\forall X, Y\in \mathcal{Z}, X\neq Y \Rightarrow X\cap Y=S.$
\end{center}
\end{lemma}
\begin{proof}
Let $\mathcal{W}$ be an uncountable collection of finite sets. We may assume that for some $n$ we have $\forall X\in \mathcal{W}, |X|=n.$ Then the lemma is proved by induction on $n$. the lemma is trivial for $n=1.$ Assume $n=m+1,$ and the lemma holds for $m$.

{\bf Case 1.} Some element $a$ belongs to uncountably many $X$'s; we restrict the attention to the set $\mathcal{W}_0=\{X\setminus\{a\}: X\in \mathcal{W}$ and $a\in X  \}$ and apply the induction hypothesis.

{\bf Case 2.} Each $a$ belongs to countably many $X$'s. Then there is a disjoint family $ (X_\a: \a<\aleph_1)$ constructed as follows:
the $X_\a, \a<\b$ have countably many elements, hence some element $Y$ is such that $\forall \a<\b, Y\cap X_\a=\emptyset.$ We define this as  $X_\b.$
\end{proof}
We now turn to the proof of Lemma 3.2.1.
\begin{proof}
If an uncountable antichain  $(p_\xi: \xi<\aleph_1 )$ exists, the domain can be made to satisfy the conclusion of the $\Delta-$lemma. Now the value $S$ of $dom(p_\xi)\cap dom(p_\zeta)$ is fixed and $p_\xi \upharpoonright S$ values in a countable set, extracting one more time, we may assume $p_\xi \upharpoonright S$ is constant. But then any two conditions are compatible.
\end{proof}
\begin{theorem}
the family
\begin{center}
$a_\xi= \sum_{f(\xi,n)=1, n\geq 1}1/2^n$
\end{center}
is a set of distinct Cohen generic reals.
\end{theorem}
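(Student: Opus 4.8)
The plan is to verify the two assertions packed into the statement separately: that each $a_\xi$ is a Cohen generic real in the sense of the earlier lemma (it avoids every closed nowhere dense set coded in $V$), and that $\xi \neq \xi'$ yields $a_\xi \neq a_{\xi'}$. Throughout, write $f = \bigcup G$ for the generic function and $p_\xi$ for the finite partial function $n \mapsto p(\xi,n)$ read off the $\xi$-th row of a condition $p$. Both assertions will be established by exhibiting dense sets and invoking genericity of $G$.

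For the genericity of a fixed $a_\xi$, let $F$ be a closed nowhere dense set coded in $V$. I would show that
\[ E_F = \{ p \in \PP : \dom(p_\xi) = \{1,\dots,N\} \text{ for some } N, \text{ and the closed dyadic interval cut out by } p_\xi \text{ is disjoint from } F \} \]
is dense. Given $p$, first extend $p_\xi$ to an initial segment $\{1,\dots,M\}$ by filling undefined digits with $0$; this pins $a_\xi$ to a closed dyadic interval $I_0$ of length $2^{-M}$ with nonempty interior. Since $F$ is closed and nowhere dense, $\operatorname{int}(I_0)$ contains a closed dyadic subinterval $I_1 = [k 2^{-N}, (k+1)2^{-N}]$ disjoint from $F$; because $I_1 \subset \operatorname{int}(I_0)$, the digits defining $I_1$ extend those of $I_0$, so specifying them yields a condition $q \leq p$ in $E_F$. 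Any $q \in G \cap E_F$ forces $a_\xi \in I_1$ with $I_1 \cap F = \emptyset$, hence $a_\xi \notin F$; as $F$ was arbitrary, $a_\xi$ is Cohen generic. (Alternatively one may note that $\PP$ factors as the product of the single-row forcing $\PP_\xi$ with the remaining coordinates, so the $\xi$-row of $G$ is generic for $\PP_\xi$, which is forcing-equivalent to the interval Cohen forcing.)

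For distinctness I would first show that, generically, each row has infinitely many $0$'s and infinitely many $1$'s: for every $N$ the sets $\{p : \exists n > N,\ p(\xi,n) = 1\}$ and $\{p : \exists n > N,\ p(\xi,n) = 0\}$ are dense, and meeting all of them forces the claim. A binary sequence with infinitely many $0$'s and infinitely many $1$'s is neither eventually $0$ nor eventually $1$, so $a_\xi$ is not a dyadic rational and the row is its \emph{unique} binary expansion. Consequently, generically, $a_\xi = a_{\xi'}$ holds iff the rows agree at every $n \geq 1$. Finally, for $\xi \neq \xi'$ the set $\{p : \exists n,\ p(\xi,n), p(\xi',n) \text{ both defined and unequal}\}$ is dense, so generically the two rows differ somewhere; combined with the uniqueness of the expansions this gives $a_\xi \neq a_{\xi'}$.

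The main obstacle is the non-uniqueness of binary expansions at dyadic rationals: without excluding these, differing rows need not produce differing reals (for instance $0.0111\ldots = 0.1000\ldots$), and the density argument for distinctness would collapse. The non-dyadic-ness step above is exactly what neutralizes this. It is also worth stressing where nowhere-denseness (rather than mere meagerness of a single set) enters the genericity argument: it is used precisely to locate a closed dyadic subinterval avoiding $F$ inside the interval cut out by the already-committed digits of the $\xi$-row.
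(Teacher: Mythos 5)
Your proof is correct and follows essentially the same route as the paper: genericity via a dense set of conditions that pin $a_\xi$ into a closed dyadic interval disjoint from the given nowhere dense $F$ (you actually carry out the interval argument that the paper dismisses as ``just technical''), and distinctness via the dense set of conditions making the $\xi$- and $\zeta$-rows disagree at some digit. The only cosmetic difference is how the non-uniqueness of binary expansions is neutralized: you run an extra density argument to get infinitely many $0$'s and $1$'s in each row, whereas the paper simply notes the $a_\xi$ are not rational, which already follows from the genericity just proved, since each singleton $\{q\}$ is a closed nowhere dense set coded in $V$.
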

\begin{proof}
We first prove each $a_\xi$ is generic. Let $F$ be a closed nowhere dense set of the ground model, and let
$D=\{p:$ for some $n, p(\xi,1), ..., p(\xi,n)$ are defined and $s= \sum_{p(\xi,i)=1, i\leq n}1/2^i$
   and $t=s+1/2^n$ are such that $[s,t]\cap F=\emptyset \}.$

We claim that this set is dense. This follows from the fact that $F$ is nowhere dense and is just technical. We then note that if $p\in G\cap D,$ then $a_\xi\in [s,t];$ so that $a_\xi\notin F.$

In order to show that the $a_\xi$'s are distinct, then as they are not rational, we have only to exhibit distinct dyadic developments. Now if $\xi\neq \zeta,$ it is easily seen that
\begin{center}
$\{p: \exists n, p(\xi,n)\neq p(\zeta,n) \}$
\end{center}
is dense; the required result follows.
\end{proof}
In particular if we take $\k=\aleph_2,$ we get a model where $CH$ fails.
 We also have the following
\begin{theorem}
($ZFC$) The following are equivalent:

$(a)$ Every $\lusim{\Sigma}_2^1$ set has the Baire property,

$(b)$ The set of reals Cohen generic over any $L[\a]$ is comeager.
\end{theorem}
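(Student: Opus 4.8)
The plan is to tie the Baire property of a $\lusim{\Sigma}_2^1$ set to the Cohen forcing relation computed inside $L[a]$, exactly as the measure version ties measurability to random forcing. Fix a real $a$ and let $A=\{x:\phi(x)\}$ be $\Sigma^1_2(a)$, where $\phi$ uses the parameter $a$. Let $\PP$ be Cohen forcing (rational intervals) viewed as a poset of $L[a]$, let $\dot g$ be the canonical name for the generic real, and put $U_A=\bigcup\{p\in\PP : p\Vdash^{L[a]}\phi(\dot g)\}$. Since $\PP$ is countable, $U_A$ is an open set coded by a real of $L[a]$. Write $C_a$ for the set of reals Cohen-generic over $L[a]$ and $D_a=\RR\setminus C_a$ for the non-Cohen reals; as already observed for closed nowhere dense sets, $D_a$ is itself $\Sigma^1_2(a)$, since ``$x\in D_a$'' says that some code in $L[a]$ of a closed nowhere dense set traps $x$, an existential real quantifier over a $\Sigma^1_2$ matrix.

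The technical core, valid in $ZFC$ with no extra hypothesis, is the identity $A\cap C_a=U_A\cap C_a$, i.e. $A\triangle U_A\subseteq D_a$. I would prove it for a fixed $x\in C_a$ by the chain
\[
x\in A \iff L[a][x]\models\phi(x) \iff (\exists p\in G_x)\;p\Vdash\phi(\dot g) \iff x\in U_A ,
\]
where $G_x=\{p\in\PP:x\in p\}$ is the $\PP$-generic filter determined by $x$. The first equivalence is Shoenfield absoluteness (the statement is $\Sigma^1_2(a)$ and $a,x\in L[a][x]$), the second is the forcing theorem of Chapter 1 applied in $L[a]$ (using $L[a][G_x]=L[a][x]$ and that $x$ is the generic real), and the third unwinds the definition of $U_A$ together with $x\in p \Leftrightarrow p\in G_x$.

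Direction $(b)\Rightarrow(a)$ is then immediate: if $C_a$ is comeager, then $D_a$ is meager, so $A\triangle U_A$ is meager; hence $A$ differs from the open set $U_A$ by a meager set and has the Baire property. As $a$ and $A$ were arbitrary, every $\lusim{\Sigma}_2^1$ set has the Baire property.

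Direction $(a)\Rightarrow(b)$ I would argue by contraposition. Suppose $C_a$ is not comeager for some $a$, so $D_a$ is a non-meager $\Sigma^1_2(a)$ set. If $D_a$ lacks the Baire property we are done, since $D_a$ witnesses the failure of $(a)$. Otherwise $D_a$ has the Baire property, and being invariant under the countable dense group of rational translations $x\mapsto x+q$ (each preserves Cohen-genericity over $L[a]$, as $q\in\QQ\subseteq L[a]$ shifts $L[a]$-coded nowhere dense sets to $L[a]$-coded nowhere dense sets), the topological zero-one law forces $D_a$ to be meager or comeager; being non-meager it is comeager, so the closed nowhere dense sets coded in $L[a]$ cover a comeager set. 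The remaining step, and the main obstacle, is to manufacture from this a $\lusim{\Sigma}_2^1$ set with no Baire property; note that $D_a$ itself need not work, since it may be all of $\RR$ (as in $V=L$), which trivially has the Baire property. Here I would invoke the $\Sigma^1_2(a)$-good wellordering $\le^*$ of $\RR\cap L[a]$: assigning to each $x$ the $\le^*$-least $L[a]$-code of a nowhere dense closed set trapping it gives a $\Sigma^1_2(a)$ selector, and diagonalizing along $\le^*$ against the $L[a]$-coded Borel meager sets produces a $\Sigma^1_2(a)$ set which, together with its complement, is non-meager in every basic interval, hence cannot have the Baire property. This is the category analogue of the classical construction of a non-measurable set from a $\Sigma^1_2$-good wellordering of a non-null set (the random/measure case); the delicate point is carrying out the Bernstein-type bookkeeping definably against the comeager family of $L[a]$-coded nowhere dense sets while staying inside $\lusim{\Sigma}_2^1$.
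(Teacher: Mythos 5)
The paper states this theorem without proof, so your proposal can only be measured against the standard argument (Solovay-style unfolding, and the Judah--Shelah characterization). Your direction $(b)\Rightarrow(a)$ is correct and complete: the identity $A\cap C_a=U_A\cap C_a$ via Shoenfield absoluteness between $V$ and $L[a][x]$, the forcing theorem applied inside $L[a]$ with $L[a][G_x]=L[a][x]$, and the observation that $D_a$ is meager under $(b)$, is exactly the right argument, and your remark that $D_a$ is $\lusim{\Sigma}_2^1$ in the code $a$ (via the $\Sigma^1_2(a)$ predicate ``$c\in L[a]$'') is also correct.

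In $(a)\Rightarrow(b)$, however, there is a genuine gap at precisely the point you flag. The reduction to ``$D_a$ comeager'' via the Baire property of $D_a$ and the topological zero--one law is fine, but the final step --- producing a $\lusim{\Sigma}_2^1$ set without the Baire property from ``the $L[a]$-coded meager sets cover a comeager set'' --- is not achievable by the Bernstein-type diagonalization you sketch. A Bernstein construction needs to wellorder (or at least transfinitely enumerate) the reals of $V$ against the $\omega_1$-sequence of coded meager sets, and only the reals of $L[a]$ carry a definable wellordering; moreover a parity- or bookkeeping-based selection along $\le^*$ (``rank of the least code is even,'' etc.) is not prima facie $\Sigma^1_2$, since computing ordinal ranks is not. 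The known correct completion goes to the plane instead: for $x\in D_a$ let $c_x$ be the $\le^*$-least $L[a]$-code of a meager Borel set containing $x$, and set $W=\{(x,y):\exists c\,(c\in L[a]\wedge x\in B_c\wedge\forall d\le^* c\;y\notin B_d)\}$, i.e.\ ``$c_x$ precedes $c_y$.'' Goodness of $\le^*$ (initial segments countable in $L[a]$ and bounded quantification along $\le^*$ preserving $\Sigma^1_2$) makes $W$ a $\lusim{\Sigma}_2^1$ subset of the plane whose vertical sections $\{x: (x,y)\in W\}$ are countable unions of meager sets, hence meager, while for every $x\in D_a$ the horizontal section $\{y:(x,y)\in W\}$ contains $D_a$ minus a meager set, hence is comeager. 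If $W$ had the Baire property, the Kuratowski--Ulam theorem would make it simultaneously meager (all vertical sections meager) and non-meager (comeager-many comeager horizontal sections, since $D_a$ is comeager), a contradiction; so $W$ witnesses the failure of $(a)$. Replacing your diagonalization paragraph by this least-index prewellordering plus Kuratowski--Ulam argument closes the gap; everything else in your write-up can stand as is.
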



We stop for a while in the connection between $2^\omega$ and $[0,1];$ the continuous map
\begin{center}
$\theta: \a \mapsto \Sigma_{n=0}^{\infty} \a(n)/ 2^{n+1}$
\end{center}
has the following properties:
\begin{enumerate}
\item The range of a closed nowhere dense set is a closed nowhere dense set,
\item The inverse image of a closed nowhere dense set is also a closed nowhere dense set.
\end{enumerate}
From this it follows that if $\a$ belongs to no nowhere dense set of $V$, then $\theta(\a)$ is generic.

So finally if the union of all closed nowhere dense sets of $2^\omega$ with a code in $V$ is meager in $2^\omega;$ then the same happens in $[0,1]$ and therefore the set of generic reals over $V$ is comeager.

We now consider the set of non-empty closed nowhere dense sets of $2^\omega$ and force with conditions which are pairs $(k, F)$ where $k$ is an integer and $F$ is a nowhere dense closed set. If $F$ is such a set, the tree $T_F$ of $F$ is defined as follows: If $s\in 2^{<\omega},$ we let
\begin{center}
$\hat{s}=\{ \a\in 2^\omega : \a$ extends $s \}.$
\end{center}
Then $T_F=\{s: \hat{s}\cap F \neq \emptyset \}$. $(l, G) \leq(k, F)$ iff $l\geq k$ and $T_F \cap 2^k = T_G \cap 2^k.$
\begin{lemma}
The set of conditions satisfies the $c.c.c.$
\end{lemma}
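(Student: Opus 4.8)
The plan is to produce a countable set into which every antichain injects, by attaching to each condition a simple finite invariant. Given a condition $(k,F)$, the natural invariant is the pair $(k,\,T_F\cap 2^k)$, namely the first coordinate together with the set of length-$k$ nodes of the tree $T_F$. Since $T_F\cap 2^k$ is a subset of the finite set $2^k$, the totality of such invariants is
\[
\bigcup_{k<\omega}\ \{k\}\times\Pset(2^k),
\]
a countable union of finite sets, hence countable. The whole argument reduces to showing that conditions sharing an invariant cannot be separated.

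The key step is therefore: if $(k,F)$ and $(k,G)$ satisfy $T_F\cap 2^k=T_G\cap 2^k$, then they are compatible. I would take the common refinement to be $(k,\,F\cup G)$. First, $F\cup G$ is again a nonempty closed nowhere dense subset of $2^\omega$, because a finite union of closed nowhere dense sets is closed nowhere dense. Second, from $\hat{s}\cap(F\cup G)=(\hat{s}\cap F)\cup(\hat{s}\cap G)$ one reads off $T_{F\cup G}=T_F\cup T_G$, and consequently
\[
T_{F\cup G}\cap 2^k=(T_F\cap 2^k)\cup(T_G\cap 2^k)=T_F\cap 2^k=T_G\cap 2^k .
\]
By the definition of the order this gives $(k,F\cup G)\leq (k,F)$ and $(k,F\cup G)\leq (k,G)$, so the two conditions are compatible.

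Combining the two steps closes the argument. Let $A$ be any antichain. The invariant map $(k,F)\mapsto (k,\,T_F\cap 2^k)$ is injective on $A$: if two elements of $A$ had the same invariant they would be compatible by the previous paragraph, contradicting that $A$ consists of pairwise incompatible conditions. Hence $A$ embeds into a countable set and is countable, which is the $c.c.c.$ (In fact the same refinement $(k,F_1\cup\cdots\cup F_n)$ shows each invariant class is centered, so the forcing is even $\sigma$-centered, but only the countability of antichains is needed here.)

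The one point requiring genuine care — the step I would regard as the main obstacle — is checking that $F\cup G$ is a legitimate condition, i.e. that nowhere density is preserved. This is exactly where the finiteness of the union matters: amalgamating infinitely many conditions would in general destroy nowhere density, so the method proves compatibility of finitely many conditions with a common invariant but nothing stronger. The remaining ingredients, the identity $T_{F\cup G}=T_F\cup T_G$ and the counting of invariants, are routine.
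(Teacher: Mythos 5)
Your proposal is correct and follows essentially the same route as the paper: the paper's proof is precisely the observation that two conditions $(k,F)$ and $(k,G)$ with $T_F\cap 2^k=T_G\cap 2^k$ are compatible with common extension $(k,F\cup G)$, from which the $c.c.c.$ follows since the invariants $(k,\,T_F\cap 2^k)$ range over a countable set. Your write-up only makes explicit the verifications the paper leaves implicit (that a finite union of closed nowhere dense sets is closed nowhere dense, the identity $T_{F\cup G}=T_F\cup T_G$, and the counting of invariants), and your closing remark that each invariant class is in fact centered, so the forcing is $\sigma$-centered, is a correct strengthening.
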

\begin{proof}
This is because conditions $(k, F), (k, G)$ such that  $T_F \cap 2^k = T_G \cap 2^k$ are compatible (common extension is $(k, F\cup G)$).
\end{proof}
If a generic set $g$ is given,  we consider the tree
\begin{center}
$T=\{s: \exists (k, F)\in g, |s|\leq k$ and $ s\in T_F  \}$
\end{center}
and the closed set
\begin{center}
$\Phi=\{\a: \forall n, \a \upharpoonright n \in T    \}.$
\end{center}
\begin{claim}
$\Phi$ defines a nowhere dense closed set.
\end{claim}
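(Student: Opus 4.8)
The plan is to recognize $\Phi$ as the set of infinite branches through the tree $T$ and then to show that $T$ has a ``hole'' above every node. First I would check that $T$ is genuinely a tree, i.e.\ closed under initial segments: if $s\in T$ is witnessed by some $(k,F)\in g$ with $|s|\le k$ and $s\in T_F$, then any initial segment $t\subseteq s$ satisfies $|t|\le k$ and $t\in T_F$ (the tree $T_F$ of a closed set is downward closed), so the \emph{same} condition witnesses $t\in T$. Consequently $\Phi=\{\alpha:\forall n\,\alpha\restriction n\in T\}$ is exactly the body of $T$ and is automatically closed. It then remains to prove nowhere density, which for the body of a tree is equivalent to the combinatorial statement that for every $s\in 2^{<\omega}$ there is an extension $t\supseteq s$ with $t\notin T$: indeed $t\notin T$ forces $\hat t\cap\Phi=\emptyset$, so $\Phi$ is not dense in the basic open set $\hat s\supseteq\hat t$.

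The second ingredient I would isolate is a \emph{coherence} property of $T$: for any $(k,F)\in g$ one has $T\cap 2^{\le k}=T_F\cap 2^{\le k}$. The inclusion $\supseteq$ is immediate from the definition. For $\subseteq$, suppose $t\in T$ with $|t|\le k$; pick a witness $(l,G)\in g$ and, using that $g$ is directed, a common extension $(m,H)\in g$ of $(k,F)$ and $(l,G)$. The ordering guarantees that $T_H$ agrees with $T_F$ up to level $k$ and with $T_G$ up to level $l$ (agreement on level $k$ gives agreement on all shorter levels, since trees of closed sets have no dead ends), so $t\in T_G$ propagates to $t\in T_H$ and then down to $t\in T_F$. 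In particular, to certify $t\notin T$ it suffices to exhibit a single $(k,F)\in g$ with $|t|\le k$ and $t\notin T_F$.

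With these in hand the proof reduces to a density argument. Fixing $s$, I would consider
\[
D_s=\{(k,F):\exists\,t\supseteq s,\ |t|\le k,\ t\notin T_F\},
\]
and claim it is dense. Given an arbitrary condition $(k,F)$, if it is not already in $D_s$ then every extension of $s$ of length $\le k$ lies in $T_F$; but $F$ is nowhere dense, so there is $t_0\supseteq s$ with $\hat{t_0}\cap F=\emptyset$, i.e.\ $t_0\notin T_F$, and necessarily $|t_0|>k$. Then $(|t_0|,F)\le(k,F)$ (the first coordinate may always be raised without altering the closed set) and $(|t_0|,F)\in D_s$, proving density. Genericity yields $(k,F)\in g\cap D_s$ together with a witness $t\supseteq s$, $|t|\le k$, $t\notin T_F$, and the coherence property converts this into $t\notin T$, which is exactly what nowhere density requires.

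I expect the coherence property of the second paragraph to be the only genuine obstacle: it is where the definition of the ordering (agreement of the trees up to level $k$) and the directedness of the generic filter are actually used, and it is what lets the nowhere density of each individual closed set $F$ be inherited by the limit object $\Phi$. The remaining steps---that $T$ is a tree and that $D_s$ is dense---are routine once one exploits the freedom to raise the integer coordinate of a condition while keeping its closed-set component fixed.
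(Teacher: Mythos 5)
Your proof is correct and takes essentially the same route as the paper, whose one-sentence argument rests on the density of the very same set $\{(k,F): \exists t \supseteq s,\ |t|\leq k,\ t\notin T_F\}$. The only addition is your coherence property $T\cap 2^{\leq k}=T_F\cap 2^{\leq k}$ for $(k,F)\in g$ (proved via directedness of $g$ and the fact that $T_F$ has no dead ends), which correctly makes explicit the step the paper leaves implicit in passing from $t\notin T_F$ to $t\notin T$ and hence $\hat{t}\cap\Phi=\emptyset$.
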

\begin{proof}
This is because if $s$ is given, then the set of conditions
\begin{center}
$\{(k, F):$ for some extension $t$ of $s$ of length $k, t\notin T_F \}$
\end{center}
is dense.
\end{proof}
Now the result that the generic reals are comeager is achieved by the following.
\begin{lemma}
Any nowhere dense set in $V$ is covered by a finite union of translations of $\Phi$.
\end{lemma}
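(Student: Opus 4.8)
The plan is to work inside the Cantor group $2^\omega$, where translation by $\beta$ is the homeomorphism $\alpha\mapsto\alpha+\beta$ (coordinatewise addition mod $2$). Since a translate of a nowhere dense set is nowhere dense and a finite union of nowhere dense sets is nowhere dense, the only real content is to produce an \emph{explicit} finite family of translates whose union contains the given $F$, with the translating elements drawn from the countable subgroup $2^{<\omega}$ of eventually-zero sequences (this restriction is what will keep the downstream union $\bigcup\{\,\Phi+\beta:\beta\in 2^{<\omega}\,\}$ meager). First I would use compactness of $F$ to localize: it suffices to show that each $\alpha\in F$ has a basic clopen neighborhood $\hat{s}$ (with $s=\alpha\upharpoonright m$) and a single $\beta\in 2^{<\omega}$ with $F\cap\hat{s}\subseteq\Phi+\beta$; then finitely many such neighborhoods cover $F$, yielding the finite translate family.

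For the local covering, write $F\cap\hat{s}=s\append F'$, where $F'$ is the tail of $F$ below $s$, again a nowhere dense closed set. If $w\in T\cap 2^m$ is any node of the generic tree at level $m$ and the generic set satisfies $[T]_w\supseteq F'$ (i.e.\ the generic tree below $w$ contains a copy of $T_{F'}$), then the translation $\beta=s+w$ sends $s\append F'$ into $\Phi$, giving $F\cap\hat s\subseteq\Phi+\beta$. So the heart of the matter is to force, for a suitable $m$ and $w$, that $[T]_w\supseteq F'$. The density input is easy to state: given any condition $(k_0,G_0)$, pick $m>k_0$ and a node $w\in 2^m$ lying below a level-$k_0$ node of $T_{G_0}$, and set $G^*=G_0\cup(w\append F')$; as a union of two nowhere dense closed sets $G^*$ is nowhere dense, it agrees with $G_0$ up to level $k_0$, and $(m,G^*)\le(k_0,G_0)$ places a full shifted copy of $F'$ below $w$.

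The hard part is \emph{persistence}. The statement $[T]_w\supseteq F'$ is a $\Pi^0_1$ (all-levels) condition, whereas the ordering $(l,G)\le(k,F)\iff l\ge k\wedge T_F\cap 2^k=T_G\cap 2^k$ pins the generic tree only up to the finite level of the condition and leaves the continuation free; thus merely having the good condition $(m,G^*)$ in the generic filter does \emph{not} by itself force the copy of $F'$ below $w$ to survive into $\Phi$, since a later condition may, above level $m$, branch away from $F'$. I would overcome this with a \emph{fusion}: organize a decreasing chain of conditions so that, from the stage at which $w$ is chosen onward, every condition reproduces $T_{F'}$ below $w$ up to its own top level; because each extension is required to agree with its predecessor up to that predecessor's level, the already-committed nodes of $w\append F'$ always lie below the locked level and hence persist forever, giving $[T]_w\supseteq F'$ in the limit. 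The genuinely delicate point — and the step I expect to be the main obstacle — is to run this fusion \emph{uniformly over all of $F$ at once} rather than one point at a time: one must choose the finitely many scales $m$, the assignment of tails to nodes $w$, and the interleaving of the layers of the various $F'$ so that a single generic chain simultaneously secures every local cover and the compactness extraction of finitely many of them goes through.
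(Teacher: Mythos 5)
You have correctly diagnosed that, with the ordering read literally as ``$(l,G)\le(k,F)$ iff $l\ge k$ and $T_F\cap 2^k=T_G\cap 2^k$,'' membership of a good condition in the generic filter does not make the planted copy of $F'$ survive into $\Phi$ --- but your proposed repair by fusion cannot work, and this is the genuine gap in your outline. A fusion argument needs conditions with infinite commitment (as in Sacks forcing, where a condition is a whole tree and the limit $T(\omega)$ of a fusion sequence is again a condition). Here a condition is a pair $(k,F)$ whose only binding content, under your reading, is the finite trace $T_F\cap 2^k$: a decreasing chain $(k_n,F_n)$ with $k_n\to\infty$ has no lower bound, and the ``locking'' you describe cannot be imposed by dense sets either, since the set of conditions reproducing $T_{F'}$ below $w$ up to their own level is not dense below a condition that has already pruned the copy. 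Worse, under the literal reading the lemma is simply false: take $F_0=\{\beta\}$; for each fixed finite $u$ it is dense to extend any $(l_0,G_0)$ to some $(l,G)$ whose level-$l$ trace omits $T_u(\beta)\upharpoonright l$ (extend each trace node by a tail different from $\beta$'s, and let $G$ consist of one branch through each new node), so generically no finite translate of $\beta$ lies in $\Phi$ at all. Hence no bookkeeping, however clever, can rescue the plan as you have set it up.

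What the paper's one-line ``the result follows'' tacitly uses is the intended ordering in which the closed-set coordinate is a persistent promise: an extension $(l,G)\le(k,F)$ must also satisfy $G\supseteq F$. Then $F\subseteq\Phi$ for every $(k,F)$ in the generic filter (the frozen traces still make $\Phi$ nowhere dense), persistence is automatic, and your entire apparatus --- localization to clopen pieces $\hat s$, compactness extraction, interleaved fusion --- becomes unnecessary. The paper covers all of $F_0$ in a single density application: thicken $(k,F)$ to $(k,F')$ with $F'=F\cup\{\alpha:\alpha\upharpoonright k\in T_F$ and $\alpha\in T_u(F_0)$ for some $u\subseteq\{0,\dots,k\}\}$; the trace at level $k$ is unchanged, so $(k,F')\le(k,F)$, and since any $\beta\in F_0$ can be flipped below level $k$ onto a trace node, $F_0\subseteq\bigcup_{u\subseteq\{0,\dots,k\}}T_u(F')\subseteq\bigcup_{u\subseteq\{0,\dots,k\}}T_u(\Phi)$ once $(k,F')$ is in the generic filter. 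Note that this flip trick delivers exactly the uniformity over all of $F_0$ that you flagged as your ``main obstacle'' and never supplied: your outline secures one neighborhood per density application and leaves the simultaneous treatment of the whole compact set open, whereas the paper gets it for free from the single dense set of thickened conditions.
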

The translations are defined from finite subsets $u$ of $\omega$ by
\begin{center}
 $T_u(\alpha)=\b$ $\hspace{.5cm}$ iff $\hspace{.5cm}$ $\left\{
\begin{array}{l}
        \a(n)=\b(n) \hspace{2.2cm} \text{ if } n\notin u,\\
        \a(n)=1-\b(n) \hspace{1.65cm} \text{ if } n\in u.
     \end{array} \right.$
\end{center}
They are continuous automorphisms of $2^\omega$.
\begin{proof}
We let $F_0$ be a non-empty nowhere dense set of $V$. Given a condition $(k, F),$ we define a new condition $(k, F'),$ where $\a\in F'$ iff $\a \upharpoonright k \in T_F, \a\in F$ or $\a\in T_u(F_0)$ for some $u \subseteq \{0, ..., k \}.$

This is a closed nowhere dense set and $(k, F')$ is an extension of $(k, F).$ Now given $\b\in F_0,$ we can define $u \subseteq \{0, ..., k \}$ such that $T_u(\b)$ is in $F'$, i.e. $\b\in T_u(F').$ Therefore
\begin{center}
$F_0 \subseteq \bigcup_{u \subseteq \{0,...,k \}} T_u(F').$
\end{center}
Finally we have shown that the set of conditions $(k, F')$ such that $F_0 \subseteq \bigcup_{u \subseteq \{0,...,k \}} T_u(F')$ is dense. The result follows.
\end{proof}

\chapter{Sacks forcing}
If $g$ is Cohen generic over $V$, then there are $A,B \subseteq \omega, A,B\in V[g]$
such that $A$ and $B$ are independent, in the sense that $A\notin V[B]$ and $B\notin V[A].$ Take $A$ to code up $g \upharpoonright \{2n: n<\omega   \}$, $B$ to code up $g \upharpoonright \{2n+1: n<\omega   \}.$ Sacks found a way to add a generic $s: \omega\rightarrow 2$ to $V$ so that the above doesn't happen: if $A,B\in V[s]$ $(A, B \subseteq \omega)$ and $A\notin V,$ then $B\in V[A],$ thus if $V=L,$ then $L[s]\models$``$ZFC+$ there are exactly two degrees of constructibility'', where for $A\subseteq \omega,$ the constructibility degree of $A=\{B\subseteq \omega: A\in L[B]$ and $B\in L[A]   \}.$ We will consider in this chapter this result and other facts about Sacks forcing.

\section{Sacks reals}

For $u,v\in 2^{<\omega},$ let $u \leq v$ if $u$ is an initial segment of $v$, $u<v$ if $u$ is a proper initial segment of $v$, $u \nsim v$ if $u \nleq v$ and $v \nleq u.$ A perfect subtree of $2^{<\omega}$ is a nonempty $T \subseteq 2^{<\omega}$ which is downward closed ($u\in T, v\leq u \Rightarrow v\in T$) and splits above each node ($u\in T \Rightarrow \exists v, v', u < v, v', v \nsim v').$ Let $Lev_n(T)$ be the set of nodes on the $n$-th level of $T$. Let $stem(T)=\{ u\in T: \forall v\in T (v <u  \Rightarrow v$ has only one immediate successor in $T)\}.$ For $t\in T, T_t=\{u\in T: u\leq t$ or $t\leq u\}.$

Then $\SSS,$ the partial ordering for adding a Sacks real $s: \omega \rightarrow 2$, is $\{T: T$ is a perfect subtree of $2^{<\omega}       \}$, ordered by inclusion. Then if $G$ is $\SSS$-generic over $V$, define $s=\bigcup_{T\in G}stem(T);$ by genericity it is easy to see that $s: \omega \rightarrow 2$, say $s$ is the Sacks real associated to $G$.
\begin{lemma}
$V[G]=V[s].$
\end{lemma}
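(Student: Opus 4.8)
The statement $V[G] = V[s]$ asserts that the generic filter $G$ and the Sacks real $s$ are interdefinable over $V$. One inclusion is immediate: since $s = \bigcup_{T \in G} \mathrm{stem}(T)$ is definable from $G$ (together with parameters in $V$), we have $s \in V[G]$, and as $V[G]$ contains all of $V$, minimality of $V[s]$ as an inner model containing $V$ and $s$ (Theorem from the first chapter) gives $V[s] \subseteq V[G]$. So the real content is the reverse inclusion $V[G] \subseteq V[s]$, and by the same minimality property it suffices to show $G \in V[s]$, i.e.\ that $G$ can be recovered from $s$ and ground-model information.

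The plan is to show that $G = \{ T \in \SSS : s \in [T] \}$, where $[T] = \{ \alpha \in 2^\omega : \forall n\, (\alpha \restriction n \in T) \}$ is the set of branches through $T$; note the right-hand side is definable in $V[s]$ using the parameter $\SSS \in V$ and the real $s$. First I would verify the easy inclusion $G \subseteq \{ T : s \in [T] \}$: if $T \in G$, then for every $T' \in G$ with $T' \subseteq T$ (and such $T'$ are cofinal in $G$ below $T$) we have $\mathrm{stem}(T') \in T'\subseteq T$, so every initial segment of $s$ lies in $T$, whence $s \in [T]$. For the reverse inclusion, suppose $T \in \SSS$ with $s \in [T]$ but $T \notin G$. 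The goal is to derive a contradiction using a density argument: I would consider the set $D_T = \{ S \in \SSS : S \leq T \text{ or } S \text{ is incompatible with } T \}$, which is dense (given any $S$, either $S \cap T$ still contains a perfect subtree, giving a common extension, or it does not, in which case one can thin $S$ to a perfect subtree incompatible with $T$). By genericity $G$ meets $D_T$; pick $S \in G \cap D_T$. If $S \leq T$ then $T \in G$ by upward closure, contradicting $T \notin G$. So $S$ is incompatible with $T$.

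The main obstacle is making this incompatibility contradict $s \in [T]$. Here I would use that $S \in G$ forces $s \in [S]$ (by the easy inclusion already established), so $s \in [S] \cap [T]$; but $S$ and $T$ being incompatible means $S \cap T$ contains no perfect subtree, and the key combinatorial point is that $[S] \cap [T] = [S \cap T]$ while a tree with no perfect subtree has at most countably many branches, each of which is eventually determined and hence lies in $V$. Thus $s \in [S]\cap[T]$ would place $s$ in a ground-model countable set, contradicting the genericity of $s$ — specifically, a density argument shows no Sacks-generic real can be a branch of a ground-model tree without a perfect subtree, since the conditions forcing $s$ to avoid any fixed countable set of ground-model reals are dense. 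This last step — showing $s$ avoids the countably many branches of $S \cap T$ — is the crux and is where the perfect-tree structure of the forcing is genuinely used; everything else is the standard recovery argument showing $G$ is definable from $s$.
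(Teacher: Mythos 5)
Your proposal proves exactly the identity the paper proves, namely $G=\{T\in\SSS: s \text{ is a branch through } T\}$, and your treatment of the easy inclusion and of the reduction to ``$G\in V[s]$'' matches the paper's. But for the hard inclusion your route is genuinely different and much heavier than the paper's. The paper argues purely forcing-theoretically: if $U\Vdash$``$\lusim{s}$ is a branch through $T$'', then already $U\subseteq T$, since otherwise one picks $u\in U\setminus T$ and passes to $U_u$, which forces the generic branch through $u\notin T$ and hence forces $\lusim{s}$ off $T$, contradicting what $U$ forces; since the ordering is inclusion, $U\subseteq T$ means $U\leq T$, so $U\Vdash$``$T\in\Gamma$'', and the truth lemma gives $T\in G$ whenever $s$ is a branch through $T$. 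No branch-counting combinatorics is needed. You instead use the dense set $D_T=\{S: S\leq T \text{ or } S\perp T\}$ (dense in \emph{any} poset, by the way --- the same observation as in Lemma 2.1.5; in the incompatible case no ``thinning of $S$'' is needed, since $S\cap T$ having no perfect subtree already makes $S\perp T$) and must then rule out $S\in G$ with $S\perp T$ via the analysis of $[S\cap T]$: incompatibility means $S\cap T$ has no perfect subtree, Cantor--Bendixson gives countably many branches, and $s$ avoids each fixed ground-model real densely. This is a correct and completable plan, and it isolates a reusable fact (a Sacks real is not a branch of any ground-model tree lacking a perfect subtree); but it buys nothing here that the paper's two-line argument does not already give.

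One step in your plan needs repair, and it is precisely the one you flagged as the crux. Your justification that every branch of $S\cap T$ lies in $V$ --- ``each of which is eventually determined'' --- is false as literally stated: a countable closed set need not consist of isolated points, so a branch of a tree with no perfect subtree need not be the unique branch through any of its nodes. What you actually need is the standard absoluteness lemma: a tree $W\in V$ with no perfect subtree in $V$ acquires no new branches in any extension. This can be proved by induction on Cantor--Bendixson rank (each branch is isolated in some derivative, and the derivatives are computed absolutely from $W$), or by a splitting argument: if some condition forced $\lusim{b}$ to be a branch of $W$ not in $V$, then below it every condition has two extensions deciding incomparable initial segments of $\lusim{b}$, and iterating this construction in $V$ yields a perfect subtree of $W$, contradiction. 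With that lemma supplied, your contradiction ($s\in[S\cap T]$ yet $s\notin V$) goes through; so the defect is a missing argument at the announced crux rather than a wrong idea, but as written that step is asserted, not proved.
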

\begin{proof}
Clearly $s\in V[G].$ To see $G\in V[s],$ we claim
\begin{center}
$G=\{ T\in \SSS: s$ is a branch through $T     \}.$
\end{center}
The $\subseteq$ is not hard to check. Now suppose $T\in \SSS,$ and $U\Vdash$``$\lusim{s}$ is a branch through $T$''.
Then we claim $U \subseteq T$ (for if not, we could extend $U$ in such a way as to force that $s$ is not a branch through $T$). Whence $U\Vdash$``$T\in \Gamma$'' (where $\Gamma$ is the canonical name for $G$) and we are done.
\end{proof}
\begin{lemma}
$\SSS$ is weakly $(\omega, \infty)$-distributive, i.e. if $T\Vdash$``$\tau: \omega \rightarrow V,$ then $\exists (F_n: n<\omega) \in V,$ each $F_n$ finite and $U \subseteq T$ such that  $U\Vdash$``$\forall n<\omega,  \tau(n)\in F_n$''.
\end{lemma}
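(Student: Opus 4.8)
The plan is to prove distributivity by a \emph{fusion} argument, the standard technique for Sacks forcing. First I would set up the combinatorial apparatus. Call $t\in T$ a splitting node of $T$ if both $t^\frown 0$ and $t^\frown 1$ lie in $T$, and call $t$ an $n$-th splitting node if exactly $n$ proper initial segments of $t$ are splitting nodes of $T$; write $\mathrm{split}_n(T)$ for the set of $n$-th splitting nodes, which is finite (of size $2^n$). For $U,T\in\SSS$ define $U\leq_n T$ to mean $U\subseteq T$ together with $\mathrm{split}_m(U)=\mathrm{split}_m(T)$ for all $m\leq n$. The key tool is the Fusion Lemma: if $T_0\geq_0 T_1\geq_1 T_2\geq_2\cdots$ is a sequence with $T_{n+1}\leq_n T_n$, then $U=\bigcap_n T_n$ is again a perfect tree, hence a condition, and $U\leq_n T_{n+1}$ for every $n$. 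I would state and quickly justify this: the hypotheses force the first $n+1$ splitting levels to stabilize from $T_{n}$ onward, so these levels survive into $U$ and $U$ still splits above each node.

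Next, starting from the given $T$ with $T\Vdash$``$\tau:\omega\to V$'', I would build such a fusion sequence together with finite sets $F_n\in V$. Set $T_0=T$. Given $T_n$, I run through the finitely many pairs $(t,i)$ with $t\in\mathrm{split}_n(T_n)$ and $i\in\{0,1\}$ one at a time: since $(T_n)_{t^\frown i}\Vdash$``$\tau(n)\in V$'', there is a condition $R\leq (T_n)_{t^\frown i}$ and a value $v_{t,i}\in V$ with $R\Vdash$``$\tau(n)=v_{t,i}$'', and I replace the subtree of $T_n$ above $t^\frown i$ by $R$. Because the subtrees above the various $t^\frown i$ are pairwise disjoint and every such modification takes place strictly above the $n$-th splitting level, these graftings do not interfere and leave $\mathrm{split}_m$ unchanged for all $m\leq n$; the result is a condition $T_{n+1}\leq_n T_n$ below each of whose $n$-th splitting successors the value $\tau(n)$ is decided. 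I then put $F_n=\{v_{t,i}:t\in\mathrm{split}_n(T_n),\ i\in\{0,1\}\}$, a finite set, and continue.

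Finally, with $U=\bigcap_n T_n$ the fusion, I would verify $U\Vdash$``$\forall n,\ \tau(n)\in F_n$''. Fix $n$ and suppose toward a contradiction that some $U'\leq U$ forces $\tau(n)\notin F_n$. Since $U\leq_n T_{n+1}$ gives $\mathrm{split}_n(U)=\mathrm{split}_n(T_{n+1})$, every branch of the perfect tree $U'$ passes through some $t^\frown i$ with $t\in\mathrm{split}_n(U)$; fixing such a node $t^\frown i\in U'$, the tree $U'_{t^\frown i}$ is a condition with $U'_{t^\frown i}\leq U_{t^\frown i}\leq (T_{n+1})_{t^\frown i}\Vdash$``$\tau(n)=v_{t,i}$'', while also $U'_{t^\frown i}\leq U'\Vdash$``$\tau(n)\notin F_n$'' --- contradicting $v_{t,i}\in F_n$. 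Hence $U\Vdash$``$\tau(n)\in F_n$'' for every $n$, and since the whole construction is carried out inside $V$, the sequence $(F_n:n<\omega)$ belongs to $V$, as required.

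The step I expect to be the main obstacle is the inductive construction of $T_{n+1}$: one must decide $\tau(n)$ below each of the finitely many successors of the $n$-th splitting nodes while keeping the first $n+1$ splitting levels frozen, so that $T_{n+1}\leq_n T_n$ and the fusion converges. The delicate point is that a value-deciding refinement $R\leq (T_n)_{t^\frown i}$ may move the splitting structure arbitrarily high, so one must check that grafting these disjoint pieces back in neither destroys the splitting node $t$ nor creates new splitting at levels $\leq n$; this is exactly what guarantees both that $T_{n+1}$ is a legitimate perfect tree and that $\mathrm{split}_m$ is preserved for all $m\leq n$, which drives the whole argument.
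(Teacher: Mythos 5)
Your proof is correct and follows essentially the same fusion argument as the paper: at each stage $n$ you shrink finitely many pairwise disjoint subtrees so as to decide $\tau(n)$, collect the decided values into the finite set $F_n\in V$, and fuse, with the final density-of-deciding-subtrees argument matching the paper's closing observation that every extension of the fused condition is compatible with one of the deciding subtrees chosen at stage $n$. The only difference is bookkeeping: the paper freezes the tree below dynamically chosen levels $k_0<k_1<\cdots$ (with an explicit splitting proviso between $k_n$ and $k_{n+1}$) and decides $\tau(n)$ below each node of $\Lev_{k_n}(T(n))$, whereas you freeze the first $n+1$ splitting levels and decide below the immediate successors of the $n$-th splitting nodes --- two standard, interchangeable formulations of the same technique.
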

We will prove the lemma by a ``fusion argument'', that we now explain, and below will refer back to it without details. For $T, U\in \SSS$ and $k<\omega$ let $U \leq_k T$ if $U \subseteq T$ and $U_{<k}=T_{<k},$ where $T_{<k}=\bigcup_{n<k}Lev_n(T).$
\\
{\bf Fusion Lemma:} Suppose that $T(0) \geq T(1) \geq ...$ is a decreasing sequence of conditions in $\SSS$ and $k_0 < k_1 < ... <\omega$ are such that $T(n+1) \leq_{k_n} T(n),$ and such that for each $t\in Lev_{k_n}(T(n)),$ there are $u,v>t, u\nsim v$ and $u, v\in T(n+1)_{< k_{n+1}}$ Then $T(\omega)=\bigcap_n T(n)$ is a perfect tree extending each $T(n).$

We now turn to the proof of Lemma 4.1.2.
\begin{proof}
Let $T(0)=T,$ pick $k_0 < \omega$ be arbitrary, and for each $t\in Lev_{k_0}(T(0))$ pick $S(t) \leq T_t$ and $v_t\in V$ such that $S(t)\Vdash$``$\tau(0)=v_t$''. Let $F(0)=\{v_t: t\in Lev_{k_0}(T(0))     \}$ and $T(1)=\bigcup \{S(t): t\in Lev_{k_0}(T(0))\}.$ Let $k_1 > k_0$ be such that the splitting condition for each $t\in Lev_{k_0}(T(0))$ is satisfied. Now construct $T(2), T(3), ...$ similarly. Then $T(\omega) \Vdash$``$\forall n<\omega, \tau(n)\in F_n$'', since every extension of $T(\omega)$ must be compatible, for each $n$, with one of the $S(t)$'s defined at stage $n$.
\end{proof}
\begin{lemma}
For $s$ Sacks generic over $V$, $\omega_1^{V[s]}=\omega_1$ and if $CH$ holds in $V$, then $\Card^{V[s]}=\Card^V$.
\end{lemma}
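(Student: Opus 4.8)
The plan is to treat the two assertions separately: the preservation of $\omega_1$ follows from the weak $(\omega,\infty)$-distributivity of $\SSS$ (Lemma 4.1.2), while the preservation of every larger cardinal, under $CH$, follows from a cardinality count feeding into the $\kappa$-c.c. preservation theorem proved in the first chapter.

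First I would establish $\omega_1^{V[s]} = \omega_1$. Since ordinals are absolute and every ordinal countable in $V$ stays countable in $V[s]$, it suffices to show that $\omega_1 = \omega_1^V$ is not collapsed, i.e. that no $f \colon \omega \to \omega_1$ in $V[s]$ is onto $\omega_1$. Suppose toward a contradiction that some condition $T$ forces a name $\tau$ to be a surjection from $\omega$ onto $\omega_1$. Then in particular $T \Vdash$``$\tau \colon \omega \to V$'', so Lemma 4.1.2 yields a sequence $(F_n : n < \omega) \in V$ of finite sets and a condition $U \subseteq T$ with $U \Vdash$``$\forall n, \tau(n) \in F_n$''. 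Working in $V$, put $A = \bigcup_{n<\omega}(F_n \cap \omega_1)$, a countable union of finite sets of ordinals below $\omega_1$, hence a countable subset of $\omega_1$ lying in $V$, so $A \neq \omega_1$. But $U \Vdash$``$\range(\tau) \subseteq A$'', contradicting that $\tau$ is forced onto $\omega_1$. Hence $\omega_1$ is preserved.

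Next, assuming $CH$ in $V$, I would preserve all cardinals $\geq \aleph_2$ by a chain-condition argument. Each condition is a perfect subtree of $2^{<\omega}$, hence an element of $\mathcal{P}(2^{<\omega})$; since $|2^{<\omega}| = \aleph_0$ we get $|\SSS| \leq 2^{\aleph_0} = \aleph_1$ under $CH$. Thus every antichain of $\SSS$ has size $\leq \aleph_1 < \aleph_2$, so $\SSS$ trivially satisfies the $\aleph_2$-c.c., and $\aleph_2$ is regular. By the cardinal-preservation theorem for $\kappa$-c.c. forcings, forcing with $\SSS$ preserves all cardinals $\geq \aleph_2$.

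Finally I would assemble the pieces: every cardinal of $V$ is $\aleph_0$ (absolute), $\aleph_1$ (preserved by the first step), or $\geq \aleph_2$ (preserved by the second step), so each remains a cardinal in $V[s]$; and since $V \subseteq V[s]$ every cardinal of $V[s]$ is automatically a cardinal of $V$, giving $\Card^{V[s]} = \Card^V$. I expect the only genuine content to reside in the second paragraph: the whole weight of $\omega_1$-preservation rests on invoking Lemma 4.1.2 correctly, in particular on observing that a putative collapsing map takes ordinal values and is therefore covered, level by level, by ground-model finite sets whose union is forced to be a countable subset of $\omega_1$. The remaining steps are routine counting.
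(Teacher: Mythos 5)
Your proof is correct and takes essentially the same approach as the paper: $\omega_1$-preservation is derived from the weak $(\omega,\infty)$-distributivity of Lemma 4.1.2, and preservation of cardinals $\geq\aleph_2$ comes from $|\SSS|\leq 2^{\aleph_0}=\aleph_1$ under $CH$, giving the $\aleph_2$-c.c.\ and invoking the chain-condition preservation theorem of Chapter 1. The only difference is that you spell out the step the paper leaves implicit behind ``follows from Lemma 4.1.2,'' namely that a name for a surjection $\omega\to\omega_1$ would have its range trapped in the countable ground-model set $\bigcup_{n<\omega}(F_n\cap\omega_1)$, which is a correct filling-in of that detail.
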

\begin{proof}
$\omega_1^{V[s]}=\omega_1$ follows from Lemma 4.1.2. If $CH$ holds in $V$, then since $|\SSS|=2^{\aleph_0}, \SSS$ has the $\aleph_2-c.c,$ so cardinals above $\omega_1$ are preserved from $V$ to $V[s]$ as well.
\end{proof}
Note that if $W$ is a generic extension of $V, A \subseteq \omega, A\in W\setminus V,$ then there is an infinite $B\subseteq \omega, B\in W$ such that no infinite subset of $B$ lies in $V$. To see this, take a bijection $ f: [\omega]^{<\omega} \leftrightarrow \omega, f\in V$ and let $B=\{f(A\cap n): n<\omega      \}.$ For $W=V[s], s$ a Sacks real, the next best thing happens: there is an infinite $C\subseteq \omega, C\in V$ with $C\subseteq A$ or $C \subseteq \omega\setminus A.$ Written in terms of functions, this is
\begin{lemma}
If $s$ is Sacks generic over $V$,  then every $f:\omega \rightarrow 2, f\in V[s]$ has an infinite subset belonging to $V$.
\end{lemma}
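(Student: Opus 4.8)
The plan is to recast the statement as a density fact and then build the witnessing condition by a fusion, in the style of the proof of Lemma 4.1.2. Identify $f$ with the set $A=f^{-1}(1)$, so that the assertion ``$f$ is constant on some infinite $C\in V$'' is exactly ``$C\subseteq A$ or $C\cap A=\emptyset$'', i.e. the statement that $\SSS$ adds no splitting real. It suffices to show: below an arbitrary $T$ forcing ``$\lusim f:\omega\to 2$'', the set of conditions $U$ for which there are an infinite $C\in V$ and a value $i\in 2$ with $U\Vdash$ ``$f(c)=i$ for all $c\in C$'' is dense. Once this is established, genericity of $G$ together with the forcing theorem ($V[G]\models\phi$ iff some $p\in G$ forces $\phi$) produces an infinite $C\in V$ on which $f$ is constant, whence $f\restriction C\in V$, which is the conclusion.

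First I would fix $i=0$ and run a fusion $T=T(0)\geq_{k_0}T(1)\geq_{k_1}\cdots$ that simultaneously shrinks the tree and grows $C$. At stage $n$ I keep a finite family of conditions (``fronts'') $\{S_\tau:\tau\in 2^{n}\}$, each with $S_\tau\leq T$, together with integers $c_0<\cdots<c_{n-1}$ such that $S_\tau\Vdash$ ``$f(c_m)=0$'' for every $m<n$ and every $\tau$. To pass to stage $n+1$, I first split each $S_\tau$ into two incompatible perfect subtrees (possible since perfect trees split above every node, using the $T_t$ notation), obtaining $\{S_\tau:\tau\in 2^{n+1}\}$; then I locate a fresh integer $c_n>c_{n-1}$ and refinements $S'_\tau\leq S_\tau$ with $S'_\tau\Vdash$ ``$f(c_n)=0$'' for all $\tau$. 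Choosing the splitting levels $k_n$ large enough and retaining enough splitting above each node of $\Lev_{k_n}(T(n))$, the hypotheses of the Fusion Lemma are met, so the fusion limit $U=\bigcap_n T(n)$ is a perfect tree with $U\leq T$, and by construction $U\Vdash$ ``$f(c)=0$ for all $c\in C$'' where $C=\{c_n:n<\omega\}\in V$ is infinite; this puts $U$ into the dense set.

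The one delicate point — and the main obstacle — is the simultaneous choice of the fresh integer $c_n$ together with refinements forcing the value $0$ across all $2^{n+1}$ fronts at once, since a priori different fronts could decide a given coordinate differently. I would handle this by a dichotomy. Call $c$ \emph{bad} for $S_\tau$ if $S_\tau\Vdash$ ``$f(c)=1$'', and let $B_\tau$ be the set of such $c$; note that $c\notin B_\tau$ means $S_\tau\nVdash$ ``$f(c)=1$'', so by Lemma 2.2.2(a) some extension of $S_\tau$ forces ``$f(c)=0$''. Thus a fresh integer $c_n>c_{n-1}$ admits the desired refinements precisely when $c_n\notin\bigcup_{\tau}B_\tau$. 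Now either $\bigcup_{\tau}B_\tau$ omits infinitely many integers above $c_{n-1}$, in which case a suitable $c_n$ exists and the fusion proceeds; or $\bigcup_{\tau}B_\tau$ is cofinite, and then — being a union of finitely many sets — some single $B_\tau$ is infinite, so $S_\tau\Vdash$ ``$f(c)=1$'' for every $c$ in the infinite set $C=B_\tau\in V$. In the latter case I simply stop the construction and output the condition $S_\tau\leq T$, the infinite set $C=B_\tau$, and the value $i=1$. In either case a member of the dense set below $T$ is produced, which completes the density argument and hence the lemma.
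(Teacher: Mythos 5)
Your proof is correct, and it runs on the same engine as the paper's: a fusion in which, at each stage, finitely many fronts are all shrunk so as to force a common value of $\lusim{f}$ at a fresh coordinate, with the Fusion Lemma and the usual compatibility argument (every extension of the fused tree is compatible with some stage-$n$ front) then yielding a condition that forces an infinite ground-model subfunction. The difference is in the logical organization, and it is a real one. The paper argues by contradiction: from the assumption $T\Vdash$``$\lusim{f}$ has no infinite subset in $V$'' it extracts the observation that every $U\leq T$ decides $\lusim{f}(n)$ for only finitely many $n$ (otherwise the decided values would already constitute an infinite subfunction in $V$ forced below $U$), and this finiteness guarantees at every stage a coordinate $m_n$ that \emph{no} front decides, so each front can be extended to force $\lusim{f}(m_n)=0$; the fused tree then refutes the assumption. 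Since you run a direct density argument instead, that observation is unavailable to you, and your dichotomy on $B_\tau=\{c: S_\tau\Vdash \lusim{f}(c)=1\}$ is exactly the right substitute: either some fresh $c_n$ avoids the finite union $\bigcup_\tau B_\tau$ --- note that you need only that no front forces the value $1$ there, which is weaker than the paper's undecidedness, with Lemma 2.2.2(a) then supplying the refinements forcing $0$ --- or a single $B_\tau\in V$ is infinite, and that front itself, with $C=B_\tau$ and $i=1$, already lies in the dense set, halting the construction. Your version buys a genuine density fact (for the record, density formally requires launching the fusion from an arbitrary $T'\leq T$ rather than from $T$; this is harmless since any such $T'$ forces the same statement) and can terminate early with value $1$, whereas the paper's reductio buys slightly lighter bookkeeping, treating all fronts symmetrically with no case split and always outputting the value $0$ under an impossible hypothesis. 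One detail to watch: your $2^{n}$-indexed fronts fit the paper's Fusion Lemma, whose splitting hypothesis is stated for all $t\in\Lev_{k_n}(T(n))$, only if each $k_{n+1}$ is chosen \emph{after} $T(n+1)$ is built, large enough that the required splitting has occurred --- exactly as the paper does in Lemma 4.1.2; alternatively, take the fronts to be $\{T(n)_t: t\in \Lev_{k_n}(T(n))\}$ verbatim as in the paper's proofs, and your dichotomy goes through unchanged either way.
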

\begin{proof}
Otherwise some $T\Vdash$``$\lusim{f}: \omega \rightarrow 2$ has no infinite subset in $V$''. For $U\in \SSS,$ say that $U$ decides $\lusim{f}(n)$ ($U \| \lusim{f}(n)$) if for some $i<2, U\Vdash$``$\lusim{f}(n)=i$''. By the assumption on $T$, for every $U\leq T, \{n: U \parallel \lusim{f}(n)   \}$ is finite. Now do a fusion argument to construct a sequence $T=T(0) \geq_{k_0} T(1) \geq_{k_1} ...$ and $k_0 < k_1 < ... <\omega.$ At stage $n$, let $\mathcal{V}=\{T(n)_t: t\in Lev_{k_n}(T(n))   \}.$ By the above finiteness assumption, there is an $m_n$ such that no $U\in \mathcal{V}$ decides $\lusim{f}(m_n)$, so for each such $U$, pick an $S_U \leq U, S_U\Vdash$``$\lusim{f}(m_n)=0$''. Then let $T(n+1)=\bigcup_{U\in\mathcal{V}}S_U,$ and pick $k_{n+1}$ as in 4.2.1. Letting $T(\omega)=\bigcap_n T(n),$ and $h(m_n)=0,$ all $n<\omega,$ then $T(\omega) \Vdash$``$h \subseteq \lusim{f}$''.
\end{proof}
We now show that Sacks forcing leads to a minimal generic extension.
\begin{theorem}
Suppose $s$ is Sacks generic over $V$. If $A,B \in V[s], A,B \subseteq \omega$ and $B\notin V,$ then $A\in V[B].$
\end{theorem}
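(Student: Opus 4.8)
The plan is to prove the stronger assertion that $s$ itself lies in $V[B]$; since $A \in V[s]$ and $s \in V[B]$ gives $V[s] \subseteq V[B]$, the desired conclusion $A \in V[B]$ is then immediate (indeed $V[s] = V[B]$). So fix names $\lusim{A},\lusim{B},\lusim{s}$, and use $B \notin V$ to fix a condition $T \in G$ with $T \Vdash$``$\lusim{B}\notin V$''. All the work will take place below $T$, and the target becomes: produce $U \in G$ with $U \le T$ together with a function $h \in V$ such that $U \Vdash$``$\lusim{s}=h(\lusim{B})$''. For such $U$ and $h$ the equation $s = h(B)$ holds in reality, and since $h \in V$ and $B \in V[B]$ this yields $s \in V[B]$.

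The combinatorial engine is the following splitting lemma: \emph{if $U \Vdash$``$\lusim{B}\notin V$'', then there are incompatible $R_0,R_1 \le U$ and an $m<\omega$ with $R_0 \Vdash$``$\lusim{B}(m)=0$'' and $R_1 \Vdash$``$\lusim{B}(m)=1$''.} For if not, then for each $m$ all extensions of $U$ that decide $\lusim{B}(m)$ give one common value $b(m)$ (some extension decides it by density, and by assumption they agree), whence $U \Vdash$``$\lusim{B}=\check b$'' with $b=(b(m))_{m} \in V$, contradicting $U \Vdash$``$\lusim{B}\notin V$''. Since incompatible perfect trees cannot have all their nodes pairwise comparable, we may pick incomparable $w_0 \in R_0$, $w_1 \in R_1$ with $w_0 \nsim w_1$; gluing $(R_0)_{w_0} \cup (R_1)_{w_1}$ into a single perfect $U' \le U_t$ then upgrades this to the form I actually need: \emph{above any node $t$ of $U$ there is a splitting node $z \ge t$ of some $U' \le U_t$ and an $m$ such that the two immediate-successor subtrees of $z$ in $U'$ force $\lusim{B}(m)=0$ and $\lusim{B}(m)=1$, respectively.} Call such a $z$ a $\lusim{B}$-split; the gluing is routine tree surgery.

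Now I would run a fusion below $T$ exactly in the manner of the Fusion Lemma, adding the single extra demand that every splitting node introduced be a $\lusim{B}$-split. Concretely, build $T=T(0)\ge_{k_0} T(1)\ge_{k_1}\cdots$ so that above each $t \in Lev_{k_n}(T(n))$ the split witnessing the Fusion Lemma's splitting requirement is a $\lusim{B}$-split (which is available since each relevant subtree $T(n)_t \le T$ still forces ``$\lusim{B}\notin V$'' by monotonicity of forcing), recording in $V$ the coordinate $m(z)$ and the bit $\epsilon(z)$ forced on the left side. The Fusion Lemma yields the perfect $T^*=\bigcap_n T(n) \le T$, and the recorded data $z \mapsto (m(z),\epsilon(z))$ lie in $V$. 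They let one read the generic branch off $\lusim{B}$: starting at the stem and walking down $T^*$, at each splitting node $z$ one turns left if $\lusim{B}(m(z))=\epsilon(z)$ and right otherwise. This defines $h \in V$ with $T^* \Vdash$``$\lusim{s}=h(\lusim{B})$''. Correctness is precisely the observation that for every node $u$ on the generic branch $s$ one has $T^*_u \in G$ (since $s$ is a branch through $T^*_u$, by Lemma 4.1.1), so the left subtree at $z$ belongs to $G$ exactly when $s$ goes left there, and that subtree forces $\lusim{B}(m(z))=\epsilon(z)$.

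Since the fusion may be started below an arbitrary $W \le T$, the set of conditions $U \le T$ carrying such a reading function is dense below $T$; as $T \in G$, genericity provides $U \in G$ with $U \Vdash$``$\lusim{s}=h_U(\lusim{B})$'' for some $h_U \in V$. Hence $s = h_U(B) \in V[B]$, and therefore $A \in V[s] = V[B]$. The main obstacle is the fusion step itself: one must interleave the $\lusim{B}$-splitting requirement with the ordinary splitting bookkeeping so that $T^*$ is genuinely perfect while every branching is governed by a distinct coordinate of $\lusim{B}$; the only delicate point inside it is the gluing that converts the incompatible pair $R_0,R_1$ into one subtree with a literal $\lusim{B}$-splitting node, which rests on the fact that incompatible Sacks conditions must diverge at some pair of incomparable nodes.
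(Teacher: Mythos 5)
Your proposal is correct and takes essentially the same route as the paper: a fusion below a condition forcing ``$\lusim{B}\notin V$'' arranging that every splitting node of the limit tree decides some coordinate of $\lusim{B}$ oppositely on its two immediate-successor subtrees, with the map $z\mapsto m(z)$ recorded in $V$, so that $s$ can be read off from $B$ inside $V[B]$; your splitting lemma and the gluing step supply precisely the detail the paper's sketch leaves implicit. The only cosmetic difference is that you argue directly by density, producing $U\in G$ with $U\Vdash$``$\lusim{s}=h(\lusim{B})$'', whereas the paper frames the same construction as a contradiction against a condition forcing ``$\lusim{s}\notin V[\lusim{B}]$''.
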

\begin{proof}
It suffices to show $s\in V[B].$ Suppose $T\Vdash$``$\lusim{s} \notin V[\lusim{B}]$ and $\lusim{B} \notin V$''. We will construct a fusion sequence $T=T(0) \geq_{k_0} T(1) \geq_{k_1} ...$ such that letting $T(\omega)=\bigcap_n T(n), T(\omega)$ will have the following property: if $t\in T(\omega)$ is a Sacks node (i.e. $t^{\frown}0, t^{\frown}1 \in T(\omega)$), then there is a $m$ such that either
\begin{center}
$T(\omega)_{t^{\frown}0}\Vdash$``$m\in \lusim{B}$'' and $T(\omega)_{t^{\frown}1}\Vdash$``$m\notin \lusim{B}$''
\end{center}
or
\begin{center}
$T(\omega)_{t^{\frown}0}\Vdash$``$m\notin \lusim{B}$'' and $T(\omega)_{t^{\frown}1}\Vdash$``$m\in \lusim{B}$''.
\end{center}
Furthermore the function $t \mapsto m$ is in $V$, so assuming $T(\omega)$ is in the Sacks generic set, $s$ can be reconstructed from $B$, that is $T(\omega)\Vdash$``$\lusim{s}\in V[\lusim{B}]$'', a contradiction.
\end{proof}

\section{Adding many Sacks reals}
A number of independence proofs require one to add $\k$-many Sacks reals to $V$ rather than just one. If $\k$ is a cardinal (finite or infinite), $\SSS_\k,$ the partial ordering for adding $\k$-many      Sacks reals, is the set of all $f: \k \rightarrow \SSS,$ such that $\{\a<\k: f(\a) \neq 2^{<\omega}  \}$ is countable (where $\SSS$ is the Sacks forcing). Order $\SSS_\k$ by $f\leq g \Leftrightarrow \forall \a<\k, f(\a) \leq g(\a).$ Thus if $\k \leq \omega, \SSS_\k$ is just the $\k$-fold direct product of $\SSS.$ We consider which of the above results generalize to $\SSS_\k.$
\begin{lemma}
The analogues of Lemmas 4.1.1, 4.1.2 and 4.1.3 for $\SSS_\k$ hold.
\end{lemma}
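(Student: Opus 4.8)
The plan is to derive all three analogues from a single generalized fusion argument tailored to the countable supports of $\SSS_\k$, in which one works on a finite block of coordinates at each stage while guaranteeing that the limit condition still belongs to $\SSS_\k$. I will treat the analogue of Lemma 4.1.1 first, since it is essentially coordinatewise, and then set up the fusion machinery that powers the analogues of Lemmas 4.1.2 and 4.1.3.

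For the analogue of Lemma 4.1.1, for each $\a<\k$ put $s_\a=\bigcup_{f\in G}\mathrm{stem}(f(\a))$, the $\a$-th Sacks real. I would show
$$G=\{f\in\SSS_\k:\ \forall\a<\k,\ s_\a\text{ is a branch through }f(\a)\},$$
which is definable in $V[\langle s_\a:\a<\k\rangle]$ from the parameter $\langle s_\a\rangle$ together with $\k\in V$, giving $G\in V[\langle s_\a\rangle]$ and hence $V[G]=V[\langle s_\a\rangle]$. The inclusion $\subseteq$ holds because $G$ is directed, so the stems $\mathrm{stem}(f(\a))$ for $f\in G$ increase to a branch of each $f(\a)$. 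For $\supseteq$ I argue exactly as in Lemma 4.1.1 but in a single coordinate: if $f$ satisfies the right-hand condition, choose $U\in G$ forcing ``$\forall\a,\ \lusim{s}_\a$ is a branch through $f(\a)$''; were $U(\a)\not\subseteq f(\a)$ for some $\a$, one could shrink $U$ in coordinate $\a$ so as to force $s_\a$ off $f(\a)$, so $U\le f$ and $f\in G$.

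Next I set up fusion. For $f,g\in\SSS_\k$, a finite $e\subseteq\k$, and $n<\om$, declare $g\le_{e,n}f$ iff $g\le f$ and $g(\a)\le_{n}f(\a)$ for all $\a\in e$, where $\le_n$ is the relation from the Fusion Lemma. A fusion sequence is a decreasing $\langle f_m:m<\om\rangle$ together with finite sets $e_0\subseteq e_1\subseteq\cdots$ and levels $k_0<k_1<\cdots$ such that $f_{m+1}\le_{e_m,k_m}f_m$ and the single-coordinate splitting condition of the Fusion Lemma is met in each $\a\in e_m$, with a bookkeeping arranged so that every coordinate that ever enters $\bigcup_m\supp(f_m)$ thereafter lies in $e_m$ for all large $m$. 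Then $f_\om$, defined by $f_\om(\a)=\bigcap_m f_m(\a)$, is again in $\SSS_\k$: its support is the countable set $\bigcup_m\supp(f_m)$, and each coordinate is a perfect tree because that coordinate is attended to cofinally, so the single-coordinate Fusion Lemma applies there. For the analogue of Lemma 4.1.2, given $T\Vdash\tau\colon\om\to V$ I run such a sequence, using stage $m$ to decide $\tau(m)$: over the \emph{finite} set $\prod_{\a\in e_m}\Lev_{k_m}(f_m(\a))$ of node-combinations I refine below each combination to a condition deciding $\tau(m)$, collect the finitely many decided values into $F_m$, and then raise the splitting level to $k_{m+1}$. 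The limit $f_\om$ forces $\tau(m)\in F_m$ for every $m$, since any extension of $f_\om$ is, at each stage $m$, compatible with one of the finitely many refinements chosen; this is weak $(\om,\infty)$-distributivity.

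For the analogue of Lemma 4.1.3, $\om_1$-preservation is immediate from the previous paragraph: a surjection $\om\to\om_1^V$ in $V[G]$ would be a $\tau\colon\om\to V$ whose range lies in the countable set $\bigcup_m F_m$, which is absurd. Under $CH$ I would obtain the $\aleph_2$-c.c.\ by a $\Delta$-system argument on the countable supports: since $\aleph_1^{\aleph_0}=2^{\aleph_0}=\aleph_1$, the $\Delta$-system lemma applies to $\aleph_2$ countable sets and yields $\aleph_2$ conditions whose supports have a common root $R$; as there are only $\aleph_1$ possible restrictions to the countable set $R$ (again because $|\SSS|^{\aleph_0}=\aleph_1$), $\aleph_2$ of these conditions agree on $R$, and any two such are compatible, having disjoint supports off $R$. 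Hence every antichain has size $\le\aleph_1$, so cardinals $\ge\aleph_2$ are preserved, and combined with $\om_1$-preservation this gives $\Card^{V[G]}=\Card^V$. The point I expect to require the most care is the generalized fusion of the third paragraph: I must make the bookkeeping ensure simultaneously that $\bigcup_m e_m$ exhausts the (a priori unknown) countable support of $f_\om$ and that each such coordinate is revisited cofinally, so that $f_\om$ is a genuine countably-supported condition below every $f_m$; the $\Delta$-system step is routine once the $CH$-driven arithmetic $\aleph_1^{\aleph_0}=\aleph_1$ is in hand.
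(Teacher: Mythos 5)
Your proposal is correct and follows essentially the same route as the paper: part (a) reduces coordinatewise to Lemma 4.1.1, part (b) is the paper's generalized fusion lemma (your increasing finite blocks $e_m$ with bookkeeping exhausting $\bigcup_m\supp(f_m)$ are just a repackaging of the paper's enumeration $\a_0,\a_1,\dots$ satisfying its conditions (1)--(4)), and part (c) combines weak distributivity with the same $CH$-driven $\Delta$-system argument for the $\aleph_2$-c.c. You merely supply details the paper leaves implicit, such as the cardinal arithmetic $\aleph_1^{\aleph_0}=\aleph_1$ behind the root-counting and the amalgamation over the finite set $\prod_{\a\in e_m}\Lev_{k_m}(f_m(\a))$ of node-combinations.
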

\begin{proof}
$(a):$ If $G$ is $\SSS$-generic over $V$, $\a < \k,$ let $s_\a = \bigcup_{f\in G}stem(f(\a)).$ Then each $s_\a$ is Sacks generic and $V[G]=V[(s_\a: \a < \k)]$ as before.

$(b):$ To prove the weak distributivity of $\SSS_\k,$ we need the following version of the fusion lemma. For $f\in \SSS_\k,$ the support of $f$ is the countable set $\supp(f)=\{\a<\k: f(\a) \neq 2^{<\omega} \}.$
\\
{\bf Generalized fusion lemma:} suppose $f(0), f(1), ...,$ $\a_0, \a_1, ...$ and $k_0 < k_1 < ... <\omega$ are such that
\begin{enumerate}
\item $f(n+1) \leq f(n),$
\item for each $\a\in \{\a_0, ..., \a_n \}, f(n+1)(\a)\cap 2^{<k_n}=f(n)(\a)\cap 2^{<k_n}$,
\item for each $\a\in \{\a_0, ..., \a_n \}$, each $t\in (f(n)(\a))_{k_n}$ there are $u, v > t$ such that $u \nsim v$ and $u,v\in (f(n+1)(\a))_{< k_{n+1}}$,
\item $\{ \a_n: n<\omega \} = \bigcup_n \supp(f(n)).$
\end{enumerate}
Then $f(\omega): \k \rightarrow \SSS$ defined  by $f(\omega)(\a)=\bigcap_n f(n)(\a)$ is a member of $\SSS_\k.$

Now given $f\Vdash$``$\lusim{g}: \omega \rightarrow V$'', construct a  fusion sequence $f=f(0) \geq f(1) \geq ...$ reducing the possible values of $\lusim{g}(n)$ to a finite set $F_n$ at the $n$-th stage of the fusion sequence, and simultaneously choosing $\{\a_0, \a_1, ...\}$ so that $4$ holds at the end.

$(c):$ $\omega_1^{V[(s_\a:\a<\k)]} =\omega_1$ again follows from $(b)$ above. A $\Delta$-system argument, assuming $2^{\aleph_0}=\aleph_1$ in $V$, gives that $\SSS_\k$ has the $\aleph_2-c.c.$
\end{proof}
\begin{lemma}
The analogue of Lemma 4.1.5 fails for $\SSS_\k.$
\end{lemma}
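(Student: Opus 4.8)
The plan is to show that $\SSS_\k$ (with $\k\ge 2$) adds two reals that are mutually independent, contradicting the minimality property of Theorem 4.1.5: that theorem asserts that over a single Sacks real any two subsets of $\omega$ are comparable, i.e. $B\notin V$ forces $A\in V[B]$. So it suffices to exhibit $A,B\subseteq\omega$ in the $\SSS_\k$-extension with $B\notin V$ but $A\notin V[B]$. The obvious candidates are the first two side-by-side Sacks reals: by Lemma 4.2.1(a) the generic $G$ yields reals $(s_\a:\a<\k)$ with $V[G]=V[(s_\a:\a<\k)]$, each $s_\a$ Sacks generic over $V$, and I take $A=s_0$, $B=s_1$.

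First I would dispatch the easy half, $s_1\notin V$. Since $s_1$ is Sacks generic over $V$, this is the standard fact that a Sacks generic real avoids every ground-model real: given $x\in V$ and a perfect tree $T$, there is a node $t\in T$ with $t\not\le x$ (as $T$ is not a single branch), and then $T_t\le T$ forces $\lusim{s}_1\ne\check x$. Thus $\{T\in\SSS: x\notin[T]\}$ is dense for each $x\in V$, and genericity gives $s_1\ne x$ for all $x\in V$, so $s_1\notin V$.

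The heart of the argument is the other half, $s_0\notin V[s_1]$. Here I would use that $\SSS_\k$ is a product, so it factors as $\SSS\times\SSS\times\SSS_{\k\setminus\{0,1\}}$, with the first factor carrying $s_0$ and the second $s_1$. By the product (mutual genericity) lemma, the restriction of $G$ to the coordinate-$0$ factor is $\SSS$-generic over the model generated by all the remaining coordinates, a model that contains $V[s_1]$. Consequently $s_0$ is a Sacks generic real over a model including $V[s_1]$, and the density argument of the previous paragraph, now run over that larger ground model, shows $s_0\ne y$ for every $y$ there, hence $s_0\notin V[s_1]$. Taking $A=s_0$ and $B=s_1$ we have $B\notin V$ while $A\notin V[B]$, so the analogue of Theorem 4.1.5 fails.

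The step I expect to require the most care is the mutual-genericity factorization: one must verify that restricting the $\SSS_\k$-generic to a single coordinate still yields a Sacks generic filter over the extension generated by the other coordinates. This is exactly the classical product-forcing lemma applied to the two-coordinate factor $\SSS\times\SSS$ of $\SSS_\k$, and it is precisely the feature distinguishing the ``side by side'' construction from a single Sacks real, for which no such independent pair can arise. Everything else---the density computation showing a generic real escapes its ground model, and the contradiction with comparability---is routine.
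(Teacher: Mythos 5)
Your proposal is correct and follows the same route as the paper: the paper's one-line proof, ``by genericity $s_\alpha \notin V[s_\beta]$ for $\alpha \neq \beta$,'' is exactly the mutual-genericity fact you establish, taking $A = s_0$, $B = s_1$ as the counterexample to the analogue of Theorem 4.1.5. Your write-up merely fills in the details the paper leaves implicit, namely the factorization $\SSS_\k \cong \SSS \times \SSS \times \SSS_{\k\setminus\{0,1\}}$, the product-forcing lemma, and the density argument showing a Sacks real escapes all reals of the model over which it is generic.
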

\begin{proof}
By genericity $s_\a \notin V[s_\beta]$ for $\a\neq \beta.$ It is true though that if $A \subseteq \omega, A\in V[(s_\a: \a<\k)]\setminus V,$ then there is some $\a<\k$ with $s_\a\in V[A].$
\end{proof}
The reminder of this section is about the analogue of Lemma 4.1.4 for $\SSS_\k$ and related results.
\begin{theorem}
If $A \subseteq \omega, A\in V[(s_\a: \a<\k)],$ then there is an infinite $B\subseteq \omega, B\in V$ with $B \subseteq A$ or $B \subseteq \omega\setminus A.$
\end{theorem}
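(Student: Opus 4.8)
The plan is to run a generalized fusion argument in $\SSS_\k$ that mirrors the proof of Lemma~4.1.4 for a single Sacks real, but refining finitely many coordinates at each stage. Suppose toward a contradiction that some $f\in\SSS_\k$ forces ``$\lusim A\subseteq\om$ has no infinite subset in $V$'', i.e. $f\Vdash$``there is no infinite $B\in V$ with $B\subseteq\lusim A$ or $B\subseteq\om\setminus\lusim A$''. The first step, exactly as in 4.1.4, is the finiteness observation: for every $g\le f$, only finitely many integers $n$ have their membership in $\lusim A$ decided by $g$. Indeed, if $g$ decided infinitely many, then by the pigeonhole principle infinitely many would be decided the same way, producing an infinite $C\in V$ and an $i<2$ with $g\Vdash$``$\lusim A\cap C=C$'' or $g\Vdash$``$\lusim A\cap C=\emptyset$'', contradicting $g\le f$.

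Next I would build a generalized fusion sequence $f=f(0)\ge f(1)\ge\cdots$ with levels $k_0<k_1<\cdots$ and a bookkeeping enumeration $\a_0,\a_1,\dots$ of $\bigcup_n\supp(f(n))$, as in the generalized fusion lemma. At stage $n$ the active data are $\a_0,\dots,\a_n$ and the level $k_n$. Call a \emph{box} a tuple $\sigma=(t_0,\dots,t_n)$ with $t_i\in\Lev_{k_n}(f(n)(\a_i))$, and let $f(n)\restriction\sigma$ be the condition obtained from $f(n)$ by replacing, for each $i\le n$, the tree $f(n)(\a_i)$ by its subtree above $t_i$ and leaving all other coordinates fixed; there are only finitely many boxes and each is $\le f$. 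Since every box decides only finitely many values of $\lusim A$, I can choose a single integer $m_n$ decided by no box. The aim at stage $n$ is then to pass to $f(n+1)\le f(n)$, agreeing with $f(n)$ below level $k_n$ on the active coordinates, so that \emph{every} box forces ``$\lusim A(m_n)=0$''. Since every extension of a node-preserving refinement is compatible with one of its boxes, this is equivalent to asking that $f(n+1)$ itself force ``$\lusim A(m_n)=0$'' while retaining all of its level-$k_n$ nodes.

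This amalgamation is the step I expect to be the main obstacle. For a single Sacks real it is immediate: the subtrees above distinct level-$k_n$ nodes are disjoint, so one extends each independently to force the value $0$ and takes the union, as in 4.1.4. In the product the boxes overlap — they share the common tail and, along each coordinate, share level-$k_n$ nodes — so the per-box extensions cannot simply be glued, and a naive induction on the active coordinates fails because it would require a common perfect refinement of finitely many perfect trees. The resolution I would pursue is to refine each coordinate \emph{uniformly}: for each active coordinate $\a_i$ and each of its level-$k_n$ nodes $t$ I choose one subtree above $t$ that forces $0$ simultaneously with every admissible choice at the remaining coordinates, processing the coordinates in turn and using that a fact once forced stays forced under further extension (if $r\Vdash\phi$ and $r'\le r$ then $r'\Vdash\phi$) to protect decisions already secured; any new coordinates exposed by these extensions are folded into the bookkeeping so that clause~4 of the generalized fusion lemma is met. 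I would then thin once more to guarantee the splitting clause and choose $k_{n+1}$.

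Finally, set $f(\om)=\bigcap_n f(n)$, a member of $\SSS_\k$ by the generalized fusion lemma, and let $B=\{m_n:n<\om\}$, an infinite subset of $\om$ lying in $V$. I claim $f(\om)\Vdash$``$B\subseteq\om\setminus\lusim A$''. Fix $n$ and $q\le f(\om)$; shrinking stems I may assume that for each $i\le n$ the tree $q(\a_i)$ passes through a unique level-$k_n$ node $t_i$, so $q$ lies below $f(n{+}1)\restriction\sigma$ for the box $\sigma=(t_0,\dots,t_n)$, whence $q\Vdash$``$\lusim A(m_n)=0$''. Thus no extension of $f(\om)$ forces $m_n\in\lusim A$, so $f(\om)\Vdash$``$m_n\notin\lusim A$'' for every $n$, i.e. $f(\om)$ forces $B$ to be an infinite subset of $\om\setminus\lusim A$ lying in $V$. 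This contradicts the choice of $f$, proving the theorem.
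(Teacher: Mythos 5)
The step you flag as ``the main obstacle'' is not merely an obstacle: as you have set it up, it is provably impossible, and this is precisely the point at which the paper has to invoke a genuine partition theorem. Fix a level $k$ and an integer $m$ undecided by every box. There need not exist \emph{any} refinement preserving the level-$k$ nodes in which every box forces $\lusim{A}(m)=0$ (nor one in which all boxes force a common value). Counterexample in $\SSS_2$: start from $T_0=T_1=2^{<\omega}$ with $k\geq 1$, and let $\lusim{A}(m)$ be the name taking value $1$ iff $s_0(k)\oplus s_1(k)=s_0(0)\wedge s_1(0)$, where $s_0,s_1$ are the two generic branches and $\oplus$ is addition mod $2$. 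Every box $(t_0,t_1)$ with $t_i\in 2^k$ leaves this undecided. But if $T_0',T_1'$ preserve all level-$k$ nodes and every box forces value $0$, then for each $t_i$ the set of bits available at position $k$ above $t_i$ must be a singleton $b_i(t_i)$, and $b_0(t_0)\oplus b_1(t_1)=1-\bigl(t_0(0)\wedge t_1(0)\bigr)$ for all pairs; this would express the two-variable AND in the form $f(x)\oplus g(y)$, which is impossible (the four instances $x\wedge y=f(x)\oplus g(y)$ are jointly contradictory). The same computation rules out the constant value $1$. This also shows why your ``process the coordinates in turn, protecting prior decisions by persistence of forcing'' cannot work: shrinking a shared coordinate for one box changes the status of every box through that node, and in examples like the above no assignment at the fixed level with a prescribed (or even uniform) color exists at all. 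What sequential processing \emph{can} achieve is weaker: every box comes to decide $\lusim{A}(m_n)$, with colors you do not control.

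That weaker outcome is exactly where the paper's proof starts, and the missing ingredient is the Halpern--L\"auchli theorem. The paper's fusion produces $T_i'\leq T_i$ and an infinite $C\subseteq\omega$ of levels together with colorings $h_n$ on $\bigotimes_{i<d}\Lev_n(T_i')$ recording, for $n\in C$, which boxes force $n\in\lusim{A}$; then Lemma 4.2.4 (the perfect-tree Halpern--L\"auchli theorem, proved there by forcing with $\beth_{2d-1}(\aleph_0)^+$ many Cohen branches through the trees and applying Erd\H{o}s--Rado) supplies refinements $T_i''\leq T_i'$, an infinite $C'\subseteq C$, and a single color $l$ making the coloring monochromatic --- crucially, the theorem chooses $l$ and abandons level preservation, the two freedoms your scheme gives up. Taking $B=C'$ then yields $B\subseteq A$ or $B\subseteq\omega\setminus A$ according to $l$; no contradiction setup with a fixed value $0$ is available. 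Note also that for infinite $\k$ the paper reduces by a further fusion to $\k=\omega$ and needs the $\omega$-dimensional version (Lemma 4.2.7); handling finitely many active coordinates per stage does not substitute for this, since the finite-dimensional step is already where your argument breaks.
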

\begin{proof}
We first consider the case $\k=d<\omega.$ Given a condition $(T_i: i<d) \in \SSS_d,$ and a term $\lusim{A}$ for  subset of $\omega,$ a fusion argument gives $T'_i \leq T_i (i<d)$ and an infinite $C \subseteq \omega$ such that for each $n\in C$ there is an $h_n: \bigotimes_{i<d}Lev_n(T'_i) \rightarrow 2,$ such that for each $\vec{t}=(t_0, ..., t_{d-1})\in \bigotimes_{i<d}Lev_n(T'_i)$
\begin{center}
$((T'_0)_{t_0}, ..., (T'_{d-1})_{t_{d-1}}) \Vdash$``$n\in \lusim{A}$''$\Leftrightarrow h_n(t_0, ..., t_{d-1})=1.$
\end{center}
What we would like is an $l<2,$ an infinite $C' \subseteq C$ and $T''_i \leq T'_i (i<d)$ such that for each  $\vec{t}\in \bigotimes_{i<d}Lev_n(T''_i),$ when $n\in C', h_n(\vec{t})=l.$ A version of a combinatorial theorem of Halpern and Lauchli gives this fact; it was originally proved by them for a different application, and has had a number of other uses. If  $\vec{T}=(T_0, ..., T_{d-1})\in \SSS_d$ and  $C \subseteq \omega,$ define $\bigotimes^C \vec{T}$ to be $\bigcup_{n\in C}\bigotimes (Lev_n(T_0), ..., Lev_n(T_{d-1})).$

 \begin{lemma} (Perfect tree version of the Halpern-Lauchli theorem) If $\vec{T} \in \SSS_d, C \subseteq \omega$ is infinite,  $\bigotimes^C \vec{T}=K_0 \cup K_1,$ then there are $T'_i \leq T_i, \vec{T'}=(T'_0, ..., T'_{d-1})\in \SSS_d,$ an $l<2$ and an infinite $C' \subseteq C$ with  $\bigotimes^{C'} \vec{T'} \subseteq K_l.$
\end{lemma}
We state a stronger form of the lemma. If $\vec{T}=(T_0, ..., T_{d-1}) \in \SSS_d, n<\omega,$ an $n$-dense sequence is an $A_0, ..., A_{d-1}$ such that for some $m\geq n,$ each $A_i \subseteq Lev_m(T_i)$ and for each $i, \forall t\in Lev_n (T_i) \exists u\in A_i, t\leq u.$
For $\vec{t}\in \bigotimes_{i<d}T_i$, an $m$-sequence above $\vec{t}$ is an $m$-dense sequence in $((T_0)_{t_0}, ..., (T_{d-1})_{t_{d-1}}).$
\begin{lemma}
(Dense sequence version of the Halpern-Hauchli theorem) If $d<\omega, \vec{T}\in \SSS_d, C \subseteq \omega$ is infinite and   $\bigotimes^C \vec{T}=K_0 \cup K_1,$ then either
\begin{enumerate}
\item $\forall n \exists n$-dense sequence $A_0, ..., A_{d-1}$ with $\bigotimes \vec{A} \subseteq K_0,$ or
\item $\exists \vec{t}\in \bigotimes \vec{T} \forall n \exists n$-dense sequence  $A_0, ..., A_{d-1}$ above $\vec{t}$ with $\bigotimes \vec{A} \subseteq K_1$.
\end{enumerate}
\end{lemma}
\begin{proof}
Let $\k=\beth_{2d-1}(\aleph_0)^+.$  Let $\PP$ be the partial ordering for adding $\k$-many Cohen generic branches $b_{i,\a} (i<d, \a<\k)$ through each $T_i$. Thus
\begin{center}
$p\in \PP \Leftrightarrow p=(p_i: i<d),$ where $\dom(p_i) \subseteq [\k]^{<\aleph_0}, \range(p_i) \subseteq T_i.$
\end{center}
Then define
\begin{center}
$p\leq q \Leftrightarrow \forall i<d (\dom(p_i)\supseteq \dom(q_i)$ and for each $\a\in \dom(q_i), p_i(\a) \geq_{T_i} q_i(\a)).$
\end{center}
Note that two conditions $p,q$ are compatible if
$\forall i<d \forall \a\in \dom(p_i)\cap \dom(q_i) ( p_i(\a) \leq q_i(\a)$ or $ q_i(\a) \leq p_i(\a)).$ We will use the machinery of forcing with $\PP$ rather than actually taking a generic extension; we will informally use the notation $V[G]$ for the imaginary generic extension. Let $\lusim{U}$ be a name in the language of $\PP$ such that
\begin{center}
$\Vdash_{\PP}$``$\lusim{U}$ is a non-principal ultrafilter on $\omega$ with $C\in \lusim{U}$''.
\end{center}
Recall that in $V[G], b_{i,\a}$ is the $\a$-th generic branch through $T_i$: $b_{i,\a}=\{ t\in T_i: \exists p\in G, p_i(\a)=t \}.$ In $V[G],$ define for $\a_0 < \a_1 < ... < \a_{d-1} < \k$
\begin{center}
$\{\a_0, .., \a_{d-1}\} \in \tilde{K}_l \Leftrightarrow \{n: (b_{0,\a_0}, ..., b_{d-1, \a_{d-1}})\in K_l \}\in U.$
\end{center}
Back in $V$, pick for each $\a_0 < \a_1 < ... < \a_{d-1}$ a $p_{\vec{\a}}\in \PP$ and an $l_{\vec{\a}} <2$ such that $p_{\vec{\a}}\Vdash$``$\{\a_0, .., \a_{d-1}\} \in \tilde{K}_{l_{\vec{\a}}}$''  as follows:
If $\Vdash_{\PP}$``$\{\a_0, .., \a_{d-1}\} \in \tilde{K}_0$'', let $(p_{\vec{\a}})_i=\{ (\a_i, stem(T_i))\}$ for each $i<d.$ Otherwise pick $p_{\vec{\a}}$ arbitrary forcing $\{\a_0, .., \a_{d-1}\} \in \tilde{K}_1 $, where by extending we may assume $\a_i \in \dom(p_{\vec{\a}})_i$ for each $i<d.$ Define the type of $p_{\vec{\a}}$ to be $( (p_{\vec{\a}})_i({\a_i}): i<d )$, a member of $\bigotimes\vec{T}.$

If $\a_0 < ... < \a_{2d-1},$ let $Y_{\vec{\a}}=\{\a_0, \a_1\}\otimes \{ \a_2, \a_3 \}\otimes ... \otimes \{\a_{2d-2},\a_{2d-1} \}$. If for some $\vec{\gamma}, \vec{\delta}\in Y_{\vec{\a}}, p_{\vec{\gamma}} \nparallel p_{\vec{\delta}},$ let $W(\vec{\a})$ be a witness of this fact (for example, if $\vec{\gamma}=\{\a_0, \a_2, \a_4, ...  \}$ and $\vec{\delta}=\{\a_1, \a_3, \a_5, ... \}, W(\vec{\a})$ could be taken as $(i,j,k,t,u)$, where $t,u\in T_i, t$ incompatible with $u$, and for some ordinal  $\theta, \theta=$the $j$-th member of $\dom(p_{\vec{\gamma}})_i=$the $k$-th member of $\dom(p_{\vec{\delta}})_i$ and $(p_{\vec{\gamma}})_i(\theta)=t$ and $(p_{\vec{\delta}})_i(\theta)=u$).
 If for all $\vec{\gamma}, \vec{\delta}\in Y_{\vec{\a}}, p_{\vec{\gamma}} \| p_{\vec{\delta}},$ let $W(\vec{\a})=\emptyset.$

Color $[\k]^{2d}$ by $c(\{\a_0, ..., \a_{2d-1}  \})= ($the $l$ with $p_{\a_0, ..., \a_{2d-1}}\Vdash$``$\{\a_0, ..., \a_{d-1} \} \in \tilde{K}_l$'', type $p_{\a_0, ..., \a_{d-1}}, W(\a_0, ..., \a_{2d-1})).$ Then $|\range(c)|\leq \aleph_0,$ so by Erdos-Rado theorem choose an infinite $B \subseteq \k$ such that $c$ is homogeneous on $[B]^{2d}.$ Then
\begin{enumerate}
\item there is $l<2$ with $p_{\vec{\a}}\Vdash$``$\{\a_0, ..., \a_{d-1}\}\in \tilde{K}_l$'', all $\vec{\a}$ from $B$.
\item there is $(t_0, ..., t_{d-1})\in \bigotimes\vec{T}$ with $(p_{\vec{\a}})_i(\a_i)=t_i,$ all $\a_0, ..., \a_{d-1}$ from $B$.
\end{enumerate}
\begin{claim}
If $\a_0 <  ... < \a_{2d-1}$ from $B$ and $\vec{\gamma}, \vec{\delta}\in Y_{\vec{\a}},$ then $p_{\vec{\gamma}} \| p_{\vec{\delta}}.$
\end{claim}
\begin{proof}
Otherwise for all $\vec{\a}$ from $B, W(\vec{\a})$ is the same witness to incompatibility. Assume the incompatibility is $p_{\a_0, \a_2}, ... \nparallel p_{\a_1, \a_3}, ...$ via $(i,j,k,t,u)$ as in the example above; the other patterns are handled similarly. Pick a sequence $\a_0 < \b_0 < \gamma_0 < \a_1 < \b_1 < \gamma_1 < ... < \a_{d-1} < \b_{d-1} < \gamma_{d-1}$ from $B$. Using $W(\a_0, \b_0, \a_1, ...)=W(\a_0, \gamma_0, \a_1, ...)=W(\b_0, \gamma_0, \b_1, ...)$ obtain $\theta=$ the $j$-th member of $\dom(p_{\vec{\a}})_i=$ the $k$-th member of $\dom(p_{\vec{\b}})_i=$ the $k$-th member of $\dom(p_{\vec{\gamma}})_i,$ but also $(p_{\vec{\b}})_i(\theta)=t$ and $(p_{\vec{\b}})_i(\theta)=u,$ a contradiction.
\end{proof}
We are now ready to complete the proof. Given the $\vec{t}$ from $(2)$ and an $n<\omega,$ we want to find an $n$-dense sequence $\vec{F}$ above $\vec{t}$ with $\bigotimes\vec{F} \subseteq K_l$ ($l$ as in $(1)$). Let $N_i=|Lev_n((T_i)_{t_i})|,$ pick $H_i \subseteq B$ of size $N_i$ ($i<d$) with $\a\in H_i, \b\in H_j \Rightarrow \a<\b$ ($i<j<d$). Let $Z=\{(\a_0, ..., \a_{d-1}): \forall i<d, \gamma_i\in H_i   \}.$ Then if $\vec{\gamma}, \vec{\delta} \in Z,$ then $p_{\vec{\gamma}} \| p_{\vec{\delta}}.$ Let $p$ extends all $p_{\vec{\gamma}}, \vec{\gamma}\in Z.$ Extend $p$ to $\tilde{p}$ such that for all $i<d, \tilde{p}_i \upharpoonright H_i$ is $1-1$ onto $Lev_n((T_i)_{t_i}).$ Now $p_{\vec{\gamma}}\Vdash$``$b_{\vec{\gamma}}\in \tilde{K}_l$'', i.e. $\lusim{V}_{\vec{\gamma}}=\{n: b_{\vec{\gamma}}(n)\in K_l\}\in \lusim{U}.$ Extend $\tilde{p}$ to
$\tilde{\tilde{p}}$ such that for some $m$, $\tilde{\tilde{p}}\Vdash$``$m\in \bigcap_{\vec{\gamma}}V_{\vec{\gamma}}$''. We may assume by extending further that for each $i$ and $\delta\in H_i$ there is a $t_\delta \in Lev_m((T_i)_{t_i})$ with $\tilde{\tilde{p}}\Vdash$``$t_\delta\in b_{i,\delta}$''. Let $F_i=\{ t_\delta: \delta\in H_i\}.$ Then $(F_i: i<d)$ is an $m$-dense set above $\vec{t}$ with $\bigotimes\vec{F} \subseteq K_l,$ as required.
\end{proof}
This gives a proof of 4.2.3 for $\SSS_d, d<\omega.$ For the case of $\SSS_\k, \k$ infinite, it is not hard to see by a fusion argument that it suffices to show for $\k=\omega.$ For this we need the following.
\begin{lemma}
($\omega$-dimensional version of the Halpern-Lauchli theorem) If $\vec{T}\in \SSS_\omega, C \subseteq \omega$ is infinite and $\bigotimes^C\vec{T}=K_0 \cup K_1,$ then $\exists l<2 \exists C' \subseteq C$ infinite $\exists T'_i\leq T_i$ with $\bigotimes^{C'}\vec{T'} \subseteq K_l.$
\end{lemma}
As for a dense set version of this lemma, one can get a result giving either dense sequence in color class $K_0$ or perfect subtree in color class $K_1$.
\end{proof}

\chapter{Namba forcing}
In this chapter we present, under $CH$, a forcing construction of Namba, which changes the cofinality of $\aleph_2$ into $\aleph_0$ without adding any new reals (and hence without collapsing $\aleph_1$).
\section{Changing cofinality of $\aleph_2$ into $\aleph_0$ without adding new reals}
Let's start with the definition of forcing conditions. The Namba forcing $\NN\MM$ consists of pairs $(t, T),$ where
\begin{enumerate}
\item $T \subseteq \omega_2^{<\omega}$ is a tree, i.e., it is closed under initial sequences,
\item $t$ is the stem of $T$, i.e., for all $s\in T, s \upharpoonright |t|=t$ and $|\Suc_T(t)| > 1,$ where $\Suc_T(t)=\{t^{\frown} \langle \a \rangle:    t^{\frown} \langle \a \rangle\in T   \},$
    \item For each $s\in T$ there is $s' \geq_T s$ such that $|\Suc_T(s')|=\aleph_2.$
\end{enumerate}
Note that a Namba tree $T$ can be pruned so as to get that  $|\Suc_T(s)|\in \{1, \aleph_2\},$ for each $s\in T.$ Thus we will always assume that Namba trees are of this form. For a tree $T$ and $t\in T,$ set $T_t=\{s\in T: s\leq_T t$ or $ t\leq_T s    \}.$

Namba forcing is equipped with the partial order $(s, S) \leq (t, T)$ iff $s\in T$ and $S \subseteq T_s.$
\begin{lemma}
Let $G$ be $\NN\MM$-generic over $V$. Then $cf^{V[G]}(\aleph_2)=\aleph_0.$
\end{lemma}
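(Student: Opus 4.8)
The plan is to show that the generic branch through the Namba tree provides a cofinal map from $\omega$ into $\aleph_2$, so that $\cf^{V[G]}(\aleph_2) = \aleph_0$. First I would define, from the generic filter $G$, the object $b = \bigcup\{t : (t,T) \in G\}$, which is the union of the stems of the conditions in $G$. By the compatibility of conditions in $G$ and the fact that in any condition the stem sits inside the tree, this $b$ is a single function $b\colon \omega \to \omega_2$; genericity (together with clause (3) in the definition of Namba conditions, which forces arbitrarily long stems) guarantees that $\dom(b) = \omega$, so $b$ is an honest $\omega$-sequence of ordinals below $\aleph_2$.

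The heart of the argument is to prove that $b$ is cofinal in $\aleph_2^V$, and that $\aleph_2^V$ still has uncountable cofinality below it—indeed equals $\aleph_2^{V[G]}$ or at least remains an ordinal of cofinality $\aleph_0$ in $V[G]$. For cofinality, I would fix an arbitrary ordinal $\gamma < \aleph_2$ and show that the set
\begin{center}
$D_\gamma = \{(t,T) \in \NN\MM : \exists n < |t|,\ t(n) > \gamma\}$
\end{center}
is dense. Given any condition $(s,S)$, I use clause (3): there is $s' \geq_S s$ with $|\Suc_S(s')| = \aleph_2$, so among the $\aleph_2$-many immediate successors of $s'$ in $S$ there is one, say $s'^{\frown}\langle \alpha\rangle$ with $\alpha > \gamma$; then $(s'^{\frown}\langle\alpha\rangle,\ S_{s'^{\frown}\langle\alpha\rangle})$ is a condition below $(s,S)$ lying in $D_\gamma$. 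Meeting $D_\gamma$ for every $\gamma < \aleph_2$ shows $\ran(b)$ is unbounded in $\aleph_2^V$, so the range of $b$ is cofinal.

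Finally I would assemble these facts: $b$ witnesses that $\aleph_2^V$ has cofinality $\aleph_0$ in $V[G]$, because $b$ is a cofinal $\omega$-sequence into $\aleph_2^V$. The remaining point is that $\aleph_2^V$ is the object we call $\aleph_2$ in the conclusion; this rests on $\NN\MM$ preserving $\aleph_1$ and $\aleph_2$ as cardinals, which is exactly the content of the larger theorem of this section (Namba's result that no new reals are added, hence $\aleph_1$ is preserved, and that $\aleph_2$ is not collapsed below $\aleph_1$). Since the present lemma only asserts that the cofinality drops to $\aleph_0$, I may take these preservation facts as the standing assumptions of the construction and conclude directly.

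I expect the main obstacle to be the verification that $\dom(b) = \omega$—that is, that the stems grow without bound so $b$ is genuinely defined on all of $\omega$ rather than on a finite or proper initial segment. This again follows from a density argument using clause (2) (every stem has a proper splitting extension) combined with clause (3), but it must be checked carefully since it is precisely the structural feature distinguishing Namba conditions from arbitrary trees. Everything else—compatibility giving a function, the cofinality density argument—is routine once $b$ is known to be a total $\omega$-sequence.
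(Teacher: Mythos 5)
Your core argument is exactly the paper's: the paper defines $C=\bigcup\{t:\exists T,\ (t,T)\in G\}$ and simply asserts that it ``is easily seen'' to be an $\omega$-sequence cofinal in $\aleph_2$; your density sets $D_\gamma$, the compatibility argument making $b$ a function, and the totality argument via unbounded stem growth are precisely the routine verifications the paper leaves implicit, and they go through. One cosmetic repair: $(s'^{\frown}\langle\alpha\rangle,\ S_{s'^{\frown}\langle\alpha\rangle})$ need not literally be a condition, since the first coordinate of a condition must be the \emph{stem} of the tree, i.e.\ a splitting node; you should pass to the first splitting node of $S_{s'^{\frown}\langle\alpha\rangle}$ (which exists by clause (3)) as the new stem. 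This does not affect the density of $D_\gamma$.

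Your final paragraph, however, contains a genuine mathematical error: you assert that the preservation facts include ``$\NN\MM$ preserving $\aleph_1$ and $\aleph_2$ as cardinals.'' Namba forcing does \emph{not} preserve $\aleph_2$, and your own lemma is the reason it cannot: in $V[G]$ the ordinal $\omega_2^V$ has cofinality $\omega$, while $\aleph_2^{V[G]}$, being a successor cardinal under $AC$, is regular, so $\omega_2^V\neq\aleph_2^{V[G]}$; since $\aleph_1$ is preserved (that is the content of the next lemma, under $CH$) and every $V[G]$-cardinal is a $V$-cardinal, $\omega_2^V$ cannot remain a cardinal at all and is collapsed to an ordinal of cardinality $\aleph_1$ in $V[G]$. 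Note the chapter introduction claims only that $\aleph_1$ is not collapsed. The fix is interpretive, not technical: the ``$\aleph_2$'' in the statement must be read as the ordinal $\omega_2^V$ over which the trees are built, and then your density argument alone already yields $\cf^{V[G]}(\omega_2^V)=\aleph_0$, with no preservation hypothesis needed (or available). Delete the appeal to preservation of $\aleph_2$; had the lemma genuinely required it, your proof would not close.
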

\begin{proof}
Let $C=\bigcup\{t: \exists T, (t, T)\in G\}.$ It is easily seen that $C$ is an $\omega$-sequence cofinal in $\aleph_2.$
\end{proof}
\begin{lemma}
$(CH)$ The forcing $\NN\MM$ adds no  new reals.
\end{lemma}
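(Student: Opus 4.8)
The plan is to show that Namba forcing $\NN\MM$ adds no new reals by establishing a fusion-style property: given a name $\lusim{f}$ for a function $\omega \to V$ (equivalently, a real) and any condition $(t,T)$, I want to find a stronger condition $(t,S) \leq (t,T)$ that decides all values $\lusim{f}(n)$, or at least pins each $\lusim{f}(n)$ down to one of finitely (indeed countably) many possibilities that I can then reduce. The key obstruction, compared to the Sacks case, is that the splitting nodes of a Namba tree have $\aleph_2$-many successors rather than just two, so a naive "decide at every node" fusion would require deciding $\aleph_2$-many things at each splitting level and the tree would not survive. This is exactly where $CH$ must enter.

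First I would set up the fusion framework for Namba trees. For a Namba tree $T$ and $s \in T$ a splitting node, $\Suc_T(s)$ has size $\aleph_2$; I would define an ordering $(t,S) \leq_n (t,T)$ meaning $S \subseteq T$ and $S$ agrees with $T$ up to the $n$-th splitting level, together with a fusion lemma asserting that a sufficiently-decreasing $\leq_n$-sequence has a Namba-tree intersection. The heart of the matter is the single-step reduction: given a splitting node $s$ with $\aleph_2$-many successors $s^\frown\langle\alpha\rangle$, below each I can find a condition deciding the relevant value of $\lusim{f}(n)$, but I cannot keep all $\aleph_2$ successors while shrinking. Instead I must thin $\Suc_T(s)$ to an $\aleph_2$-sized subset on which the decided value is \emph{constant}. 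Since each value lies in some fixed countable-or-smaller set of possibilities (this is where one controls, via $CH$, the number of relevant candidate values — there are only $\aleph_1$-many reals, so only $\aleph_1$-many possible decided values at each step), a pigeonhole over the $\aleph_2$-many successors against the $\le\aleph_1$-many possible values yields an $\aleph_2$-sized homogeneous set of successors. This preserves the Namba splitting requirement (3) at $s$.

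The main step is therefore a fusion argument interleaving this pigeonhole-thinning at every splitting node with the enumeration of the finitely-many values $\lusim{f}(0),\dots,\lusim{f}(n)$ to be handled at stage $n$. At stage $n$, for each splitting node $s$ at the $n$-th splitting level of the current tree $T(n)$, I pass below each successor, decide $\lusim{f}(n)$, and use $CH$-pigeonhole ($\aleph_2 \to (\aleph_2)^1_{\aleph_1}$, which holds since $\aleph_2$ is regular and $\aleph_2 > \aleph_1$) to retain $\aleph_2$-many successors with a common decided value; I then amalgamate these into $T(n+1) \leq_n T(n)$. Taking $T(\omega) = \bigcap_n T(n)$, the fusion lemma guarantees $T(\omega)$ is a genuine Namba condition, and by construction $(t,T(\omega))$ forces each $\lusim{f}(n)$ to equal a value already determined in $V$ (read off from which branch-direction the generic takes at each level, but made independent of that direction by the homogeneity). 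Hence $\lusim{f}$ is forced to be an element of $V$, so no new real is added.

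The hard part will be making the amalgamation in the single fusion step actually produce a Namba tree: after thinning each of the $\aleph_2$-many successor-cones and then gluing, I must verify that requirement (3) — every node has an extension with $\aleph_2$-many successors — is preserved uniformly, and that the bookkeeping enumerates \emph{all} splitting nodes (of which there are at most $\aleph_2$-many along any level) so that in the limit every value $\lusim{f}(n)$ is decided along every branch of $T(\omega)$. The role of $CH$ is precisely to bound the number of candidate values by $\aleph_1 < \aleph_2$ so that the pigeonhole at each $\aleph_2$-splitting node does not destroy the splitting; without it the thinned successor set could drop below $\aleph_2$ and the argument collapses.
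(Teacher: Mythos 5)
There is a genuine gap at exactly the point your proposal compresses into a parenthetical. Homogenizing the decided value of $\lusim{f}(n)$ over the successors of a single splitting node does not make that value independent of the generic branch: after your thinning, each node $s$ at the $n$-th splitting level carries a constant value $i_s(n)$ over \emph{its own} successors, but $i_s(n)$ still depends on $s$, i.e., on the choices the generic makes at splitting levels $0,\dots,n-1$. So $(t,T(\omega))$ decides $\lusim{f}(0)$ and nothing more; the function read off the generic branch $(s_0,s_1,\dots)$, namely $n\mapsto i_{s_{n-1}}(n)$, is precisely a potentially new real coded by the genuinely new Namba branch. Your claim that the value is ``made independent of that direction by the homogeneity'' is the unproved --- and, as stated, false --- step. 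The natural repair, homogenizing $i_s(n)$ across entire levels down to the root, would force you to re-thin each successor set once per stage, hence $\omega$-many times, and a decreasing $\omega$-sequence of $\aleph_2$-sized subsets of $\Suc_T(s)$ can have empty intersection (split $\Suc_T(s)$ into countably many pieces $C_m$ each of size $\aleph_2$ and suppose stage $n$ returns $\bigcup_{m\geq n}C_m$), so the fusion dies. This is the structural obstruction that distinguishes Namba from Sacks, and it is why a pigeonhole fusion alone cannot prove the lemma. Note also that your invocation of $CH$ is misplaced: the single-node pigeonhole needs no $CH$ at all, since the candidate values of $\lusim{f}(n)$ are natural numbers and $\aleph_2\rightarrow(\aleph_2)^1_{\aleph_0}$ holds by regularity of $\aleph_2$.

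The paper carries out your \emph{first}, unhomogenized fusion, producing $T^*$ such that $(s,T^*_s)$ decides $\lusim{a}(n)$ for every $s\in \Lev^*_n(T^*)$, and then replaces global homogenization by a game-theoretic selection: for each real $x$ one plays the game $\mathcal{G}_x$ in which player $I$ removes a set of at most $\aleph_1$ successors at each step, player $II$ picks a successor outside the removed set, and $II$ wins iff his chosen nodes satisfy $(s_n,T^*_{s_n})\Vdash$``$\lusim{a}(n)=x(n)$'' for all $n$. This game is open for player $I$, hence determined by Gale--Stewart. Here is where $CH$ genuinely enters: it bounds the number of reals, hence of putative winning strategies $\sigma_x$ for $I$, by $\aleph_1$, so the union of all the strategies' moves at each stage still has size at most $\aleph_1<\aleph_2$ and $II$ can diagonalize against all of them simultaneously; this yields a single real $x$ for which $II$ has a winning strategy $\tau$, and the set $S_n$ of $\tau$'s possible responses at stage $n$ must have size $\aleph_2$ (else $I$ removes it), so the subtree $S^*$ with $\Lev^*_n(S^*)=S_n$ is a condition forcing $\lusim{a}=x$. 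Some such device --- selecting one ground-model real that can be maintained along an $\aleph_2$-branching subtree --- is the key idea missing from your proposal, and without it the argument collapses at the homogenization step.
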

\begin{proof}
Let $(t, T)\in \NN\MM,$ and let $\lusim{a}$ be a $\NN\MM$-name of a real, i.e., $\Vdash_{\NN\MM}$``$\lusim{a}: \omega \rightarrow \omega$''. We will construct a stronger condition $(t, T^*)\leq (t, T)$ such that for each $n<\omega$ and each $\eta\in T^*,$ there is $\nu\geq \eta$ such that for each $s\in \Suc_{T^*}(\nu), (s, T^*_s)\| \lusim{a}(n)$ ($(s, T^*_s)$ decides $\lusim{a}(n)$).

For duration of the proof, define recursively the following for a tree  $T$  with stem $t$:
\begin{enumerate}
\item $\Suc^*_T(t)=\Suc_T(t),$
\item $\Lev^*_0(T)=\Suc_T(t),$
\item $\forall s\in \Lev^*_n(T), \Suc^*_T(s)=\Suc_T(s'),$ where $s' \geq s$ is minimal such that $|\Suc_T(s')|>1,$
\item $\Lev^*_{n+1}(T)=\{ s: s'\in \Lev^*_n(T), s\in \Suc^*_T(s')  \}.$
\end{enumerate}
The construction of $(t, T^*)$ is done by induction.
\\
{\bf Case $m=0$:} Set $T^0=T.$
\\
{\bf Case $m=n+1$:} Suppose the tree $T^n$ is constructed. For each $s\in \Lev^*_n(T^n),$ choose a condition $(f(s), S^s) \leq (s, T^n_s)$ such that
$(f(s), S^s) \| \lusim{a}(n).$ Let $T^{n+1}$ be the initial closure of the set $\{f(s):  s\in \Lev^*_n(T^n) \}$ together with $T^{n+1}_{f(s)}=S^s.$ The following is immediate:
\begin{enumerate}
\item $(t, T^{n+1}) \leq (t, T^n).$
\item $\Lev^*_n(T^{n+1})=\Lev^*_n(T^n).$
\item $(s, T^{n+1}_s) \| \lusim{a}(n),$ for each $s\in \Lev^*_n(T^{n+1}).$
\end{enumerate}
Set $T^*=\bigcap_{n<\omega}T^n.$ It is immediate that for each $n<\omega$
\begin{enumerate}
\item $(t, T^*) \leq (t, T^n).$
\item $\Lev^*_n(T^*)=\Lev^*_n(T^n).$
\item $(s, T^*_s) \| \lusim{a}(n),$ for each $s\in \Lev^*_n(T^*).$
\end{enumerate}
For each real $x$, we define the game $\mathcal{G}_x$ as follows:

$\hspace{2.5cm}$ $I:$$\hspace{.5cm}$ $a_0$ $\hspace{.5cm}$$\ldots$ $\hspace{.5cm}$ $\ldots$ $\hspace{.5cm}$ $a_n$ $\hspace{.5cm}$ $\ldots$

$\hspace{2.5cm}$ $II:$$\hspace{1.cm}$$s_0$ $\hspace{.5cm}$$\ldots$ $\hspace{.5cm}$ $\ldots$ $\hspace{.5cm}$ $s_n$ $\hspace{.5cm}$ $\ldots$

where $a_n$ and $s_n$, for  $n<\omega)$ are defined as follows:

player $I$ chooses $a_0\in [\Lev^*_0(T^*)]^{\leq \omega_1},$ and then player $II$ chooses $s_0\in \Lev^*_0(T^*)\setminus a_0.$ At step $n+1,$ player $I$ chooses $a_{n+1}\in [\Suc^*_{T^*}(s_n)]^{\leq \omega_1},$ and player $II$ replies by choosing some $s_{n+1}\in \Suc^*_{T^*}(s_n)\setminus a_{n+1}.$

Player $II$ wins iff for each $n<\omega, (s_n, T^*_{s_n})\Vdash$``$\lusim{a}(n)=x(n)$''. Note that if player $I$ wins the game, then he wins in a stage $n<\omega,$ and hence the game is open for one of the players, thus by Gale-Stewart theorem, there is a winning strategy for one of the players.
\begin{claim}
There is a real $x$ for which player $I$ does not have a winning strategy.
\end{claim}
\begin{proof}
Towards a contradiction, assume that player $I$ has a winning strategy $\sigma_x,$ for each real $x$. Build by induction the sequence $(s_n: n<\omega)$ with $s_n\in \Lev^*_n(T^*)$ as follows:
\\
{\bf Case $m=0$:} for each real $x$, set $a^x_0=\sigma_x(\langle \rangle).$ Let $a_0=\bigcup\{a^x_0: x$ is a real$ \}.$ Since $CH$ holds, $|a_0|\leq \aleph_1.$ Thus we can choose $s_0\in \Lev^*_0(T^*)\setminus a_0.$
\\
{\bf Case $m=n+1$:} for each real $x$, set $a^x_{n+1}=\sigma_x(s_0, ..., s_n).$ Let $a_{n+1}=\bigcup\{a^x_{n+1}: x$ is a real$ \}.$ Since $CH$ holds, $|a_{n+1}|\leq \aleph_1.$ Thus we can choose $s_{n+1}\in \Lev^*_{n+1}(T^*)\setminus a_{n+1}.$

Define the strategy $\tau$ for player $II$ to be the move $s_n$ in stage $n$ of the game. Since $s_{n}\in \Lev^*_{n}(T^*), (s_n, T^*_{s_n}) \| \lusim{a}.$ Thus we can define a real $x$ such that for each $n<\omega, (s_n, T^*_{s_n})\Vdash$``$\lusim{a}(n)=x(n)$''. But then player $II$ wins the game $\mathcal{G}_x$ using strategy $\tau,$ and we get a contradiction.
\end{proof}
\begin{claim}
Let $x$ be a real for which player $II$ has a winning strategy for the game $\mathcal{G}_x.$ Then there is a condition stronger than $(t, T^*)$ forcing $\lusim{a}=x.$
\end{claim}
\begin{proof}
Let $\tau$ be the winning strategy for player $II$ for the game $\mathcal{G}_x.$ For $n<\omega$ set
\begin{center}
$S_n=\{\tau(a_0, ..., a_n): \forall i\leq n, a_i\in [\Lev_i(T^*)]^{\leq \omega_1}       \}.$
\end{center}
Note that necessarily $|S_n|=\aleph_2,$ since otherwise player $I$ could have removed $S_n$ from the tree and win. Let $S^* \subseteq T^*$ be a tree satisfying $\forall n< \omega, \Lev^*_n(S^*)=S_n.$ Then $(t, S^*)\Vdash$``$\lusim{a}=x$''.
\end{proof}
The lemma follows.
\end{proof}

\section{An application of Namba forcing}
In this section, we give an application of Namba forcing. Recall that
\begin{theorem} (Jensen's covering lemma)
Assume $0^\sharp$ does not exist. Then for any uncountable set $X$ of ordinals, there exists a set of ordinals $Y\in L,$ the G\"{o}del's constructible universe, such that $X \subseteq Y$ and $|X|=|Y|$.
\end{theorem}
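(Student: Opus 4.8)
The plan is to establish the contrapositive: if the covering property fails for some uncountable set of ordinals, then $0^\sharp$ exists. Throughout I would lean on two pillars of the fine structure of $L$: the \emph{condensation lemma}, to the effect that every elementary substructure of a level $L_\gamma$ has transitive collapse equal to some $L_{\bar\gamma}$, and the characterization of $0^\sharp$ as the existence of a nontrivial elementary embedding $j \colon L \to L$ (equivalently, a proper class of Silver indiscernibles). The strategy is to use a failure of covering to manufacture such a $j$.

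First I would reduce the claim. Since $|X|\cdot\aleph_1 = |X|$ for uncountable $X$, it suffices to produce $Y\in L$ with $X\subseteq Y$ and $|Y|\le |X|\cdot\aleph_1$, the extra $\aleph_1$ being absorbed. As any set of ordinals is bounded, I would then fix a limit ordinal $\gamma$ with $X\subseteq\gamma$ and $L_\gamma$ satisfying a large enough fragment of $ZF$, so that the argument can be localized inside $L_\gamma$.

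The core construction is to choose an elementary substructure $N\prec L_\gamma$ with $X\subseteq N$, $|N|=|X|$, closed under the relevant Skolem functions, and to let $\pi\colon L_{\bar\gamma}\xrightarrow{\sim} N\prec L_\gamma$ be the inverse of the transitive collapse, which is legitimate by condensation. Then $X\subseteq\range(\pi)$, so $Y_0=\range(\pi)\cap\mathrm{Ord}$ already covers $X$ with $|Y_0|\le|X|$; the sole defect is that $Y_0$ need not lie in $L$, and repairing this is the whole game. If $\crit(\pi)$ does not exist then $\pi=\id$, $N=L_\gamma$ is transitive, and $Y_0=\gamma\in L$ finishes the case; otherwise set $\kappa=\crit(\pi)$. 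The decisive point is to show, assuming $0^\sharp$ does not exist, that $\pi$ is so rigidly pinned down by its behaviour below $\kappa$ — by how it moves $\Sigma_n$-projecta and standard parameters of the levels involved — that an honest covering set can be extracted inside $L$, and that the critical points of the various collapses one is forced to consider \emph{cohere}.

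The hard part, and the genuine content of Jensen's theorem, is exactly this coherence analysis. If covering genuinely fails, one uses the non-coverable $X$ to build a long (length $\omega_1$ or more) sequence of such embeddings whose critical points and actions cannot all be reconciled within a single $L_{\bar\gamma}$; taking a direct limit of the coherent part then yields a nontrivial elementary $j\colon L\to L$, that is, $0^\sharp$. Executing this rigorously demands the full machinery of fine structure — the $\Sigma_n$-projecta, standard parameters, the $n$-th reducts of the $L_\gamma$, and a delicate induction tracking how successive collapses interact. I would expect essentially all of the technical weight to lie here, in verifying that a true counterexample to covering forces the embeddings to be nontrivial on a proper class of ordinals and so produces indiscernibles; by contrast the reductions and the condensation step above are routine.
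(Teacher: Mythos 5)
First, a point of reference: the paper does not prove this theorem at all. It quotes Jensen's covering lemma as a known black-box result, solely to argue that the Namba-generic cofinal $\omega$-sequence in $\omega_2^L$ shows the uncountability hypothesis cannot be dropped. So your proposal can only be measured against the standard Devlin--Jensen proof, and measured that way it is a roadmap, not a proof. Your reductions are fine (the absorption of $\aleph_1$ into $|X|$, fixing a limit $\gamma$ with $X\subseteq\gamma$, the hull $N\prec L_\gamma$ with $|N|=|X|$, and condensation giving $\pi\colon L_{\bar\gamma}\to L_\gamma$), and the trivial case $\pi=\id$ is handled correctly. But at the exact moment the theorem begins --- the case $\kappa=\crit(\pi)$ exists --- you write that ``an honest covering set can be extracted inside $L$'' and that the embeddings must ``cohere,'' deferring everything to ``the full machinery of fine structure.'' That sentence \emph{is} the covering lemma; nothing in your text produces $Y\in L$ in the nontrivial case, nor derives $0^\sharp$ from its impossibility.

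Concretely, what is missing: (i) the choice of $X$ as a counterexample of \emph{minimal} cardinality, which is what makes the induction close; (ii) the fine-structural Skolem-hull apparatus ($\Sigma_n$-projecta, standard parameters, reducts) needed so that the hull taken is one $L$ itself can compute, yielding a candidate cover inside $L$; and (iii) the upward-extension-of-embeddings lemma, by which, under $\neg 0^\sharp$, the collapse map $\pi\colon L_{\bar\gamma}\to L_\gamma$ with a critical point is extended to a nontrivial $j\colon L\to L$, contradicting Kunen's characterization of $0^\sharp$. Your sketch of step (iii) is also not quite the standard argument: one does not build a length-$\omega_1$ sequence of mutually incoherent embeddings and take a direct limit of ``the coherent part''; rather, a \emph{single} failure of covering at the minimal cardinality, after the fine-structural analysis, already extends to an embedding of $L$ into $L$. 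Since each of (i)--(iii) is a substantive theorem you have neither stated precisely nor proved, the proposal has a genuine gap --- indeed the entire content of Jensen's theorem lies in the part you have explicitly set aside.
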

Now let $V$ be the generic extension of $L$ by Namba forcing, and let $C\in V$ be the added $\omega$-sequence cofinal in $\omega_2^L$. It is clear that $C$ can not be covered by a countable set from $L$. So in Jensen's covering lemma, we can not remove the uncountability assumption from the hypotheses.

\chapter{Prikry forcing}
Starting from a measurable cardinal $\k,$ we present a forcing construction, due to Prikry, which changes the cofinality of $\k$ into $\omega$ without collapsing cardinals.
\section{Measurable cardinals}
Let's start with the definition of a measurable cardinal.
\begin{definition}
$\k>\aleph_0$ is a measurable cardinal, if there exists a non-trivial elementary embedding $j: V \rightarrow M,$ from $V$ into an inner model $M$, such that $crit(j),$ the least ordinal moved by $j,$ is $\k$ and $^{\k}M \subseteq M.$
\end{definition}
Given a non-trivial elementary embedding $j$ as above, we can form $U=\{A \subseteq \k: \k\in j(A)\}.$ Then $U$ is a normal measure, i.e., it is a non-principal ultrafilter on $\k,$ and
\begin{enumerate}
\item $U$ is $\k$-complete: if $\l<\k$ and $\{A_\a: \a< \l\} \subseteq U,$ then $\bigcap_{\a<\l}A_\a \in U.$
\item $U$ is normal: if $\{A_\a: \a< \k\} \subseteq U,$ then $\bigtriangleup_{\a<\k}A_\a,$ the diagonal intersection of $A_\a$'s, is in $U$, where
    $\bigtriangleup_{\a<\k}A_\a=\{\xi<\k: \a<\xi \Rightarrow \xi\in A_\a\}.$
\end{enumerate}
We can also reverse the above construction, so that starting from any normal measure $U$ on an uncountable cardinal $\k,$ we can construct an inner model $M_U$ and an elementary embedding $j_U: V \rightarrow M_U$ such that $crit(j)=\k, ^{\k}M_U \subseteq M_U$ and $U=\{A \subseteq \k: \k\in j_U(A)    \}.$
\begin{lemma}
Let $\k$ be a measurable cardinal, $U$ be a normal measure on $\k, A\in U$ and let $f:[A]^{<\omega}\rightarrow \{0,1,2\}.$ There there is $B\in U, B \subseteq A$ which is homogeneous for $f$, i.e., for all $n<\omega, f \upharpoonright [B]^n$ is constant.
\end{lemma}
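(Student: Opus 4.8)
The plan is to prove a Ramsey-type partition theorem for normal measures, deriving the finite-exponent homogeneity one level at a time and combining the resulting homogeneous sets using $\kappa$-completeness. The core of the argument is the single-exponent case: given $A \in U$ and a colouring $g \colon [A]^n \to \{0,1,2\}$, I will produce $B \in U$, $B \subseteq A$, such that $g \restriction [B]^n$ is constant. Once this is available for every $n$, I obtain sets $B_n \in U$ homogeneous for $f \restriction [A]^n$ and set $B = \bigcap_{n<\omega} B_n$, which lies in $U$ by $\kappa$-completeness (here only countably many sets are intersected, and $\omega < \kappa$). Then $B$ is simultaneously homogeneous for $f$ on every exponent, which is exactly the conclusion.

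To prove the single-exponent case I would argue by induction on $n$, the heart being the inductive step, and the whole reduction ultimately resting on normality. For $n=1$ a colouring $g \colon A \to \{0,1,2\}$ partitions $A$ into three pieces $A_0, A_1, A_2$; since $U$ is an ultrafilter and $A = A_0 \cup A_1 \cup A_2 \in U$, exactly one $A_i \in U$, and we take $B = A_i$. For the inductive step, suppose the result holds for $n$ and let $g \colon [A]^{n+1} \to \{0,1,2\}$. For each $\xi \in A$ consider the induced colouring $g_\xi$ of $[A \setminus (\xi+1)]^n$ given by $g_\xi(s) = g(\{\xi\} \cup s)$; by the induction hypothesis there is $B_\xi \in U$ homogeneous for $g_\xi$ with colour $c(\xi) \in \{0,1,2\}$. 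This defines a colouring $c \colon A \to \{0,1,2\}$, and by the $n=1$ case there is $A' \in U$ on which $c$ is constant, say with value $c^*$.

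The decisive move is to glue the sets $B_\xi$ together using the \emph{diagonal intersection}, which is where normality is essential. Let $B = A' \cap \dintersect_{\xi < \kappa} B'_\xi$, where $B'_\xi = B_\xi$ for $\xi \in A'$ and $B'_\xi = \kappa$ otherwise; by normality $\dintersect_{\xi<\kappa} B'_\xi \in U$, so $B \in U$. The point of the diagonal intersection is that for any increasing tuple $\xi_0 < \xi_1 < \dots < \xi_n$ drawn from $B$, each $\xi_j$ (for $j \ge 1$) lies in $B_{\xi_0}$ because $\xi_j > \xi_0$ and $\xi_j$ belongs to the diagonal intersection; hence $\{\xi_1, \dots, \xi_n\}$ is an $n$-element subset of $B_{\xi_0}$, and since $B_{\xi_0}$ is $g_{\xi_0}$-homogeneous and $\xi_0 \in A'$ has colour $c(\xi_0)=c^*$, we get $g(\{\xi_0,\dots,\xi_n\}) = c^*$ independently of the tuple. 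Thus $g \restriction [B]^{n+1}$ is constantly $c^*$, completing the induction.

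The main obstacle I anticipate is verifying the normality step carefully: one must check that the diagonal intersection genuinely forces every later coordinate of a tuple into the $B_{\xi_0}$ attached to its least element, and that the bookkeeping (restricting induced colourings to tails $A \setminus (\xi+1)$, extending each $B_\xi$ to all of $\kappa$ so the diagonal intersection is defined) does not disturb membership in $U$. Everything else is routine ultrafilter combinatorics. I would also note that three colours play no special role; the argument works verbatim for any finite number of colours, the value $\{0,1,2\}$ being immaterial beyond finiteness.
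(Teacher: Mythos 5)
Your proof is correct, and it is the standard Rowbottom argument for normal measures: induct on the exponent, reduce $[A]^{n+1}$ to $[A]^n$ via the induced colourings $g_\xi(s)=g(\{\xi\}\cup s)$, stabilize the colour map $c$ by the $n=1$ ultrafilter case, and glue the sets $B_\xi$ with a diagonal intersection so that every element of a tuple beyond the least one lands in $B_{\xi_0}$; finally intersect the $\omega$ many exponent-wise homogeneous sets using $\kappa$-completeness. The paper in fact states this lemma (used later in the Mathias characterization, where homogeneous sets $A_t$ are again combined by diagonal intersection) \emph{without proof}, so there is no argument to compare against; yours supplies exactly the proof the text presupposes, and the diagonal intersection you use coincides with the paper's Definition of $\bigtriangleup_{\alpha<\kappa}A_\alpha$. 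Two small points you implicitly use and should make explicit: the tail $A\setminus(\xi+1)$ belongs to $U$ because a $\kappa$-complete nonprincipal ultrafilter contains no bounded set, and the base case needs only finite additivity of the ultrafilter, so (as you note) the number of colours is irrelevant as long as it is finite (or indeed any number $<\kappa$, by $\kappa$-completeness).
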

\section{Prikry forcing}
Throughout this section, fix a normal measure $U$ on a measurable cardinal $\k,$ which is derived from some elementary embedding $j: V \rightarrow M.$ The Prikry forcing $\PP_U$ consists of pairs $(s, A)$ where
\begin{enumerate}
\item $s\in [\k]^{<\omega},$
\item $A\in U,$
\item $\max(s) < \min(A).$
\end{enumerate}
The order relation is defined by $(s, A) \leq (t, B)$ iff
\begin{enumerate}
\item $s$ end extends $t,$
\item $ A \subseteq B$,
\item $s\setminus t \subseteq B.$
\end{enumerate}
The intuition behind this is that we are going to add an $\omega$-sequence $C$ cofinal in $\k;$ a condition $(s, A)$ carries the information that $s$ is an initial segment of this sequence, and the subsequent $C\setminus s$ must be chosen from $A$.
Let $G$ be $\PP_U$-generic over $V$. Set $C_G=\bigcup\{s: \exists A, (s,A)\in G\}.$
\begin{lemma}
$C_G$ is an $\omega$-sequence cofinal in $\k.$
\end{lemma}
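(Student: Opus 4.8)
The plan is to show that $C_G$ is both an $\omega$-sequence (i.e.\ has order type $\omega$) and is cofinal in $\k$, using a genericity/density argument for each part. The two facts $C_G\subseteq\k$ and $C_G$ consists of ordinals are immediate from the definition of the conditions, since each $s\in[\k]^{<\omega}$ is a finite set of ordinals below $\k$. The real content is that $C_G$ is infinite (hence of order type $\omega$, as it is a set of ordinals that is a union of an increasing chain of finite sets) and that it is unbounded in $\k$.

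\textbf{Infinitude.} First I would check that $C_G$ is infinite. For each $n<\omega$, consider the set
\[
D_n=\setof{(s,A)\in\PP_U}{|s|\ge n}.
\]
I claim each $D_n$ is dense: given any condition $(t,B)$, since $B\in U$ and $U$ is a non-principal ultrafilter on $\k$, the set $B$ is infinite, so we may pick ordinals $\gga_0<\dots<\gga_{k}$ in $B$ (for as many as needed to reach length $\ge n$) and form $s=t\cup\set{\gga_0,\dots,\gga_k}$. Shrinking $B$ to $A=\setof{\gb\in B}{\gb>\max(s)}$ (still in $U$ by $\k$-completeness, as we remove fewer than $\k$ ordinals) yields a condition $(s,A)\le(t,B)$ in $D_n$. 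By genericity $G\cap D_n\ne\emptyset$, so $C_G$ contains a set of size $\ge n$; as $n$ is arbitrary, $C_G$ is infinite.

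\textbf{Cofinality.} Next I would show $C_G$ is unbounded in $\k$. For each $\gx<\k$, let
\[
E_\gx=\setof{(s,A)\in\PP_U}{\max(s)>\gx}.
\]
Again each $E_\gx$ is dense: given $(t,B)$, since $U$ is non-principal the set $B\setminus(\gx+1)$ is still in $U$, so it is nonempty; pick $\gga\in B$ with $\gga>\gx$, set $s=t\cup\set{\gga}$ and $A=\setof{\gb\in B}{\gb>\gga}\in U$. Then $(s,A)\le(t,B)$ lies in $E_\gx$. By genericity some condition in $G$ meets $E_\gx$, giving an element of $C_G$ above $\gx$. Hence $\sup C_G=\k$.

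Finally I would note that $C_G$, being a union of a directed family of finite sets (any two conditions in $G$ are compatible, so their stems cohere as end-extensions), is itself linearly ordered by $\in$ with every proper initial segment finite; combined with infinitude this forces $\otp(C_G)=\omega$. \textbf{The main obstacle}, such as it is, is purely bookkeeping: one must verify that the shrunk measure-one sets remain in $U$ and that the new stems genuinely form conditions (i.e.\ $\max(s)<\min(A)$), which follows from $\k$-completeness and non-principality of $U$; there is no deep difficulty, and the argument is a routine density computation.
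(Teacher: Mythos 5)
Your proof is correct and follows essentially the same route as the paper, which uses the single dense family $D_{n,\a}=\{(s,A): \len(s)>n \text{ and } \max(s)>\a\}$ where you split it into the two families $D_n$ and $E_\gx$ --- a trivial repackaging of the same density argument. Your extra verifications (that shrunken sets stay in $U$ by $\k$-completeness and non-principality, and that $\otp(C_G)=\omega$ since stems in $G$ cohere as end-extensions) are exactly the routine details the paper leaves implicit.
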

\begin{proof}
It is clear that $C_G$ is a sequence of length at most $\omega.$ Given any $n<\omega,$ and any $\a<\k,$ the set
\begin{center}
$D_{n, \a}=\{(s, A): \len(s)>n$ and $\max(s)>\a   \}$
\end{center}
is dense in $\PP_U,$ from which it follows that $C_G$ is an $\omega$-sequence cofinal in $\k.$
\end{proof}
It is also clear that
\begin{center}
$G=\{(s, A): s$ is an initial segment of $C_G$ and $C_G\setminus s \subseteq A    \},$
\end{center}
and hence $V[G]=V[C_G].$ So we can talk about $\omega$-sequences from $\k$ being generic for the Prikry forcing $\PP_U;$ such sequences are called Prikry sequences. We are now going to show that forcing with $\PP_U$ preserves all cardinals.
\begin{lemma}
$\PP_U$ satisfies the $\k^+$-c.c.
\end{lemma}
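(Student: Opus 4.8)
The plan is to show that $\PP_U$ satisfies the $\k^+$-c.c.\ by exploiting the fact that a condition's "shape" is determined essentially by its first coordinate, the finite stem. The key observation is that two conditions sharing the \emph{same} stem are always compatible: if $(s,A)$ and $(s,B)$ both lie in $\PP_U$, then $(s, A\cap B)$ is a common extension, since $A\cap B\in U$ by the $\k$-completeness of $U$ (in particular $U$ is closed under finite, indeed $<\k$-sized, intersections), and $\max(s)<\min(A\cap B)$ because $A\cap B\subseteq A$. Thus compatibility fails only between conditions with distinct stems.

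Given this, I would argue as follows. Suppose toward a contradiction that $\set{(s_\xi,A_\xi):\xi<\k^+}$ is an antichain of size $\k^+$. By the preceding observation, the stems $s_\xi$ must be pairwise distinct, so $\xi\mapsto s_\xi$ is an injection from $\k^+$ into $[\k]^{<\omega}$. The main step is therefore a counting argument: I would show $|[\k]^{<\omega}|=\k$. Indeed $[\k]^{<\omega}=\bigunion_{n<\omega}[\k]^n$, each $[\k]^n$ has cardinality $\k$ (a cardinal $\k$ satisfies $\k^n=\k$ for finite $n$), and a countable union of sets of size $\k$ has size $\k$. Hence $|[\k]^{<\omega}|=\k<\k^+$, contradicting the existence of an injection from $\k^+$ into $[\k]^{<\omega}$.

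I do not expect any serious obstacle here; the entire content is the stem-sharing compatibility fact together with the cardinal arithmetic $|[\k]^{<\omega}|=\k$. The one point deserving care is the invocation of $\k$-completeness of $U$ to guarantee $A\cap B\in U$: this is exactly property $(1)$ recorded for the normal measure $U$ in the previous section, so it is available. It is worth noting that the bound is sharp in the sense that $\PP_U$ need not satisfy the $\k$-c.c.: one can produce an antichain of size $\k$ by choosing conditions with distinct singleton stems $\set{\a}$, so $\k^+$ is the correct chain-condition value.

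<br>

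Altogether the proof is short: compatibility reduces to equality of stems, and the number of possible stems is exactly $\k$, forcing every antichain to have size at most $\k<\k^+$.
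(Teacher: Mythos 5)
Your proof is correct and follows essentially the same route as the paper's: conditions with the same stem are compatible via $(s, A\cap B)$, and since $|[\k]^{<\omega}|=\k$, every antichain has size at most $\k$. Your additional details (the $\k$-completeness of $U$ and the sharpness remark via distinct singleton stems) are accurate elaborations of the same argument.
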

\begin{proof}
First note that any two conditions $(s, A), (s, B)\in \PP_U$ are compatible, as witnesses by the common extension $(s, A\cap B).$ $[\k]^{<\omega}$ has cardinality $\k$, so any antichain contains at most $\k$ mutually incompatible members.
\end{proof}
It remains to show that cardinals $\leq \k$ are preserved. Define an auxiliary relation $\leq^*$ on $\PP_U,$ called the direct extension or the Prikry extension, by $(s, A)\leq^* (t, B)$ iff
\begin{enumerate}
\item $s=t,$
\item $ A \subseteq B$.
\end{enumerate}
 It is clear that $(\PP, \leq^*)$ is $\k$-closed, i.e., if $\l<\k$ and $(p_\a: \a<\l)$ is a $\leq^*$-decreasing sequence of conditions in $\PP_U,$ then there exists $p\in \PP_U$ which is a direct extension of each $p_\a, \a<\l.$ The main technical tool we will prove is the following
\begin{theorem}
$(\PP_U, \leq, \leq^*)$ satisfies the Prikry property: given any statement $\phi$ of the forcing language $(\PP_U, \leq),$ and any condition $(s, A)\in \PP_U,$ there exists $(s, B)\leq^* (s, A)$ such that $(s, B)$ decides $\phi$.
\end{theorem}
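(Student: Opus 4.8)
The plan is to prove the Prikry property by a combinatorial argument combining the $\kappa$-completeness and normality of $U$ with the Ramsey-type partition lemma (Lemma 6.1.2) stated earlier. The key idea is that for a fixed stem $s$, whether a condition $(s,A)$ forces, refutes, or leaves undecided the statement $\phi$ depends only on which ordinals we are allowed to append to $s$; by measuring the relevant sets we can shrink $A$ to a set in $U$ on which the ``decision status'' is homogeneous.

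First I would prove the central stabilization step. Fix $(s,A)$ and the statement $\phi$. For each finite sequence $t$ of ordinals below $\kappa$ that could extend $s$ (i.e. $t\in[A]^{<\omega}$ with $\max(s)<\min(t)$), consider the three-valued coloring that records whether $(s{}^\frown t, A')$ can be directly extended to decide $\phi$ positively, negatively, or not at all — more precisely, color $[A]^{n}$ by assigning to each $t$ the value $0$, $1$, or $2$ according to whether some $(s{}^\frown t, B)\leq^*(s{}^\frown t, A)$ forces $\phi$, forces $\neg\phi$, or (as the fallback) whether no direct extension decides $\phi$. Here the $\kappa$-completeness of $U$ is essential: for each fixed $t$ of a given length there is a single set in $U$ witnessing the color, and we take diagonal intersections over the $\kappa$-many possible last coordinates using normality to keep everything inside $U$. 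Applying Lemma 6.1.2 with $f:[A]^{<\omega}\to\{0,1,2\}$ this coloring, I obtain $B\in U$, $B\subseteq A$, homogeneous: for each $n$, $f\restriction[B]^n$ is constant.

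Next I would use homogeneity to show $(s,B)$ itself decides $\phi$. The crucial reduction is to argue by induction on the length of sequences that if every one-step extension $(s{}^\frown\langle\alpha\rangle, B)$ for $\alpha\in B$ already decides $\phi$ the same way, then $(s,B)$ decides it too: this is because any generic extension of $(s,B)$ picks some least new ordinal $\alpha\in B$, landing below some $(s{}^\frown\langle\alpha\rangle, B\setminus(\alpha+1))$, and genericity together with a density argument forces the common decided value up to the shorter stem. The homogeneity of the coloring guarantees the decision is uniform across all choices of $\alpha$, so there is no ambiguity; the case where no extension decides (color $2$ everywhere) is ruled out by showing it contradicts the fact that $\phi$ or $\neg\phi$ must hold in the generic extension.

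The main obstacle I expect is the induction closing the gap between ``all one-step extensions decide $\phi$ uniformly'' and ``the stem $s$ itself decides $\phi$''. One must rule out the degenerate color-$2$ case, and this requires a genuine argument rather than bookkeeping: if $f$ took the value $2$ constantly on some level, one would need to build, by iterating the failure of decision down a generic Prikry sequence, a contradiction with the fact that in $V[G]$ the statement $\phi$ has a definite truth value. Controlling the interaction between the $\kappa$-closure of $\leq^*$ (which lets us take limits of direct extensions) and the finite-support nature of $\leq$ (which governs how generics actually interact with conditions) is the delicate point, and it is precisely where the normality of $U$, via diagonal intersections, does the real work.
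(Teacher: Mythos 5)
Your proposal is correct in outline, but it is not the paper's argument: it is precisely the ``simple proof'' via the partition property that the paper explicitly mentions after the statement of the theorem and then deliberately avoids. You homogenize the three-valued coloring $f\colon [A]^{<\omega}\to\{0,1,2\}$ (some direct extension of $(s^{\frown}t,\cdot)$ forces $\phi$, forces $\neg\phi$, or none decides) by Lemma 6.1.2, rule out the everywhere-undecided color by density of deciding conditions, and pull the decision down the stem one step at a time. The paper never invokes Lemma 6.1.2: it uses Lemma 6.2.5$(b)$ to stabilize the dense open set $D=\{p: p \decides \phi\}$ into a single $A^*\in U$, then for each finite $t$ partitions the one-step extensions $\a$ into $A^0_t, A^1_t, A^2_t$ according to the decision status of $(s^{\frown}t^{\frown}\a, A^*\setminus(\a+1))$, selects the measure-one piece $A^*_t$, forms $A^{**}=A^*\cap \bigtriangleup_t A^*_t$, and derives a contradiction from a deciding extension $(s^{\frown}t,B)$ with $\len(t)$ minimal by showing the decision propagates to the shorter stem $s^{\frown}u$, where $t=u^{\frown}\a$. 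So the two proofs share your two key mechanisms --- diagonal-intersection stabilization and minimal-length descent --- but the paper replaces the appeal to the Ramsey-type Lemma 6.1.2 by a level-by-level measure-one choice among the three decision classes, using only normality; what your route buys is brevity once the partition lemma is in hand, while what the paper's route buys (its stated reason) is portability to generalized Prikry-type forcings whose filters admit diagonal intersections but no Rowbottom-style partition property. One point you should tighten: homogeneity of $f$ alone gives, for each $t$ of the deciding length, only \emph{some} witness $B_t$ with $(s^{\frown}t, B_t)$ deciding $\phi$, whereas your one-step descent needs the uniform witness $(s^{\frown}t, B\setminus(\max(t)+1))\decides\phi$; hence your stabilization must be a diagonal intersection indexed by \emph{all} finite sequences $t$, exactly as in Lemma 6.2.5, not merely over ``the $\k$-many possible last coordinates'' as you phrase it. With that made precise, your argument closes.
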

It is possible to use Lemma 6.1.2, to present a simple proof of Theorem 6.2.3; however, we will present a different proof, which  has the advantage that it can be applied for generalized Prikry like forcing notions. The main technical device is the diagonal intersection.

\begin{definition}
Suppose $(A_s: s\in [\k]^{<\omega})$ is such that each $A_s \subseteq \k.$ Then the diagonal intersection of this sequence is defined to be $\bigtriangleup_{s}A_s=\{\a< \k: \max(s) < \a \Rightarrow \a\in A_s\}.$
\end{definition}
\begin{lemma}
$(a)$ Suppose that each $A_s\in U.$ Then $A=\bigtriangleup_s A_s\in U,$ and for all $s, (s, A\setminus (\max(s)+1)) \leq (s, A_s).$

$(b)$ Let $D$ be a dense open subset of $\PP_U$. Then there exists $A\in U$ such that for all $s\in [\k]^{<\omega}, (\exists B(s, B)\in D \Leftrightarrow (s, A\setminus (\max(s)+1)\in D).$
\end{lemma}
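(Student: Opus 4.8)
The lemma has two parts. Part (a) is the diagonal-intersection tool: given a family $(A_s : s \in [\kappa]^{<\omega})$ with each $A_s \in U$, the diagonal intersection $A = \triangle_s A_s$ is again in $U$, and tailoring $A$ below $\max(s)+1$ produces a condition extending $(s, A_s)$. Part (b) applies this to a dense open set $D$, producing a single $A \in U$ that uniformly captures membership in $D$: for every $s$, the "reduced" condition $(s, A \setminus (\max(s)+1))$ lies in $D$ exactly when $(s,B) \in D$ for some $B$.

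**Proof Plan for Part (a)**

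First I would verify $A = \triangle_s A_s \in U$ by reducing the diagonal intersection over $[\kappa]^{<\omega}$ to the ordinary normal-measure diagonal intersection. Since $|[\kappa]^{<\omega}| = \kappa$, I would fix a bijection enumerating the finite sets and re-index, then invoke $\kappa$-completeness together with normality (as listed in the properties of $U$ after the definition of measurable cardinal) to conclude $A \in U$. Concretely, for each $\xi < \kappa$, membership $\xi \in A$ requires $\xi \in A_s$ for all $s$ with $\max(s) < \xi$; because there are fewer than $\kappa$ such $s$ when $\xi < \kappa$ (as $s \subseteq \xi$ forces $|[\xi]^{<\omega}| \le |\xi| < \kappa$, or $\le \kappa$ handled by normality), $\kappa$-completeness of $U$ keeps the relevant intersection large on a measure-one set. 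Then for the inequality: given $s$, I claim $(s, A \setminus (\max(s)+1)) \le (s, A_s)$. The first two clauses of the order ($s$ end-extends $s$, and set-containment) reduce to showing $A \setminus (\max(s)+1) \subseteq A_s$; the third clause ($s \setminus s = \emptyset \subseteq A_s$) is vacuous. And indeed if $\alpha \in A$ with $\alpha > \max(s)$, then by the definition of $\triangle_s A_s$ applied to this particular $s$ we get $\alpha \in A_s$, which is exactly the containment needed.

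**Proof Plan for Part (b)**

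For each $s \in [\kappa]^{<\omega}$, I would first decide whether there exists $B$ with $(s,B) \in D$. If such a $B$ exists, set $A_s = B$; otherwise set $A_s = \kappa$ (or any fixed measure-one set). Then let $A = \triangle_s A_s$, which lies in $U$ by part (a). The forward direction of the biconditional is the content I would extract from openness: if $(s,B) \in D$ for some $B$, then by part (a) the condition $(s, A \setminus (\max(s)+1))$ extends $(s, A_s) = (s,B) \in D$, and since $D$ is \emph{open} (downward closed under $\le$), the extension also lies in $D$. The reverse direction is immediate by taking $B = A \setminus (\max(s)+1)$ as the witness.

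**Main Obstacle**

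The genuinely delicate point is establishing $\triangle_s A_s \in U$ for a family indexed by $[\kappa]^{<\omega}$ rather than by ordinals below $\kappa$, since the paper's stated normality only covers $\kappa$-indexed families $\triangle_{\alpha<\kappa} A_\alpha$. The resolution is a careful re-indexing that, for each threshold $\xi<\kappa$, bounds the number of finite sets $s$ with $\max(s)<\xi$ and then combines $\kappa$-completeness with normality; I expect this bookkeeping—making precise that "only $<\kappa$ many $A_s$ are active below any $\xi$"—to be where the real care is needed, while both order-verifications and the openness argument in (b) are routine once (a) is in hand.
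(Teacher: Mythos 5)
Your proposal is correct, and its part (b) together with the order-verification in (a) coincides exactly with the paper's proof: the same choice of $A_s$ witnessing $(s,A_s)\in D$ with default $A_s=\k$, the same appeal to openness of $D$ for the forward implication, and the same containment $A\setminus(\max(s)+1)\subseteq A_s$ read off from the definition of the diagonal intersection. Where you genuinely diverge is the key step of (a), namely $\bigtriangleup_s A_s\in U$. The paper verifies this in one line through the elementary embedding: since $U$ is derived from $j\colon V\to M$, it suffices to check $\k\in j(\bigtriangleup_s A_s)$, which unravels to ``$\k\in j(A_s)$ for every $s\in[j(\k)]^{<\omega}$ with $\max(s)<\k$''; any such $s$ is a finite subset of $\k$ and hence fixed by $j$, so this is precisely the hypothesis that each $A_s\in U$. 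You instead reduce the $[\k]^{<\omega}$-indexed diagonal intersection to the ordinal-indexed one covered by the paper's statement of normality: for $\a<\k$ put $B_\a=\bigcap\{A_s : s\subseteq \a+1\}$, which lies in $U$ by $\k$-completeness because $|[\a+1]^{<\omega}|=|\a|+\aleph_0<\k$ ($\k$ being measurable, hence inaccessible), and then $\bigtriangleup_{\a<\k}B_\a\subseteq\bigtriangleup_s A_s$, since for $\xi$ in the left side and $\max(s)<\xi$ one takes $\a=\max(s)$; upward closure of $U$ finishes. Your route is more elementary---it uses only the combinatorial axioms of a normal measure and works verbatim when no embedding is exhibited---at the cost of exactly the bookkeeping you flagged (note two small slips in your sketch: the count is $|\xi|+\aleph_0$ rather than $\le|\xi|$, and normality plays no role in the counting itself; it is applied once, at the end, to the $\k$-indexed family $B_\a$). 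The paper's route is shorter and is the one that transfers to the generalized Prikry-type forcings it alludes to, where the measure is most naturally manipulated through $j$; both yield complete proofs once your reduction is written out as above.
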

\begin{proof}
$(a)$ To show that $A\in U,$ it suffices to show that $\k \in j(A),$ i.e.,
\begin{center}
$\forall s\in [j(\k)]^{<\omega} (\max(s)< \k \Rightarrow \k\in A_s)$
\end{center}
which is clear by our assumption. The second part is easily verified as $A\setminus (\max(s)+1) \subseteq A_s.$

$(b)$ For each $s,$ pick $A_s\in U$ such that $(s, A_s)\in D,$ if there is any, and $A_s=\k$ otherwise. Then $A=\bigtriangleup_s A_s$ is as required.
\end{proof}
We are now ready to complete the proof of Theorem 6.2.3.
\\
{\bf Proof of Theorem 6.2.3.} Assume towards a contradiction that there is no direct extension of $(s, A)$ which decides $\phi.$ The set $D=\{p\in \PP_U: p \| \phi\}$ is dense open, so by Lemma 6.2.5$(b),$ there exists $A^*\in U,$ such that for any $t \in [\k]^{<\omega} (\exists B, (t, B) \| \phi \Leftrightarrow (t, A^*\setminus (\max(t)+1))\| \phi).$ We may further suppose that $A^* \subseteq A\setminus (\max(s)+1)).$ For any $t\in [A^*]^{<\omega},$ we partition the set $A^*\setminus (\max(t)+1)$ into three sets

\hspace{1.5cm} $A^0_t=\{\a: (s^{\frown}t^{\frown}\a, A^*\setminus (\a+1)) \Vdash \phi         \},$

\hspace{1.5cm} $A^1_t=\{\a: (s^{\frown}t^{\frown}\a, A^*\setminus (\a+1)) \Vdash \neg\phi         \},$

\hspace{1.5cm} $A^2_t=\{\a: (s^{\frown}t^{\frown}\a, A^*\setminus (\a+1)) \nparallel \phi \}.$

For any $t$, there is a unique $i<3$ so that $A^i_t\in U,$ call it $A^*_t$. Also let $A^{**}=A^*\cap \bigtriangleup_t A^*_t.$ By our assumption, $(s, A^{**})$ does not decide $\phi.$ Let $(s^{\frown}t, B) \leq (s, A^{**})$ decides $\phi,$ where $\len(t)$ is minimal among such extensions. We will produce a shorter extension of $(s, A^{**})$ which also decides $\phi.$

Let us assume that $(s^{\frown}t, B) \Vdash$``$\phi$''. Note that $\len(t)>0,$ so we can write it as $t= u^{\frown}\a$.
 Then we have $\a\in A^*_u,$ and by our assumption, we must have $A^*_u=A^0_u.$ It follows from our choice of $A^0_u$ that
\begin{center}
$\forall \b\in A^{**}\setminus (\max(u)+1), (s^{\frown}u^{\frown}\b, A^{**}\setminus (\b+1))\Vdash$``$\phi$''.
\end{center}
Every extension of $(s^{\frown}u, A^{**}\setminus(\max(u)+1))$ is compatible with some condition of the form $(s^{\frown}u^{\frown}\b, A^{**}\setminus (\b+1)),$ where $\b\in A^{**}, \b > \max(u)+1,$ therefore $(s^{\frown}u, A^{**}\setminus(\max(u)+1))\Vdash$``$\phi$''. But $\len(u) < \len(t),$ and we get a contradiction with the minimal choice of $\len(t).$ \hfill$\Box$
\begin{lemma}
If $A\in V[G]$ is a bounded subset of $\k,$ then $A\in V.$
\end{lemma}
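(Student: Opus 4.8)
The plan is to prove that $\PP_U$ adds no new bounded subsets of $\kappa$ by combining the Prikry property (Theorem 6.2.3) with the $\kappa$-closure of the direct extension ordering $\leq^*$. First I would fix a name $\lusim{A}$ for the bounded set together with a condition $(s,B) \in G$ and an ordinal $\lambda < \kappa$ such that $(s,B) \Vdash$``$\lusim{A} \subseteq \lambda$''; since $A$ is bounded in $\kappa$, such a $\lambda$ and such a condition exist. The whole point will be to find a single \emph{direct} extension $(s,B^*) \leq^* (s,B)$ that decides the statement ``$\alpha \in \lusim{A}$'' simultaneously for every $\alpha < \lambda$.

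To construct $(s,B^*)$ I would run a recursion of length $\lambda$. Set $(s,B_0) = (s,B)$. At stage $\alpha < \lambda$, first use the $\kappa$-closure of $(\PP_U, \leq^*)$ (equivalently, $\kappa$-completeness of $U$, which applies since $\alpha < \lambda < \kappa$) to obtain a common direct extension $(s, B'_\alpha)$ of all the conditions $(s, B_\beta)$ with $\beta < \alpha$; then apply the Prikry property to the statement ``$\alpha \in \lusim{A}$'' and the condition $(s, B'_\alpha)$ to get a direct extension $(s, B_\alpha) \leq^* (s, B'_\alpha)$ deciding it. The sequence $\langle (s, B_\alpha) : \alpha < \lambda \rangle$ is $\leq^*$-decreasing, so by $\kappa$-closure it has a lower bound $(s, B^*)$. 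Since $(s,B^*) \leq^* (s,B_\alpha)$ implies $(s,B^*) \leq (s,B_\alpha)$, and decisions are preserved downward (Lemma 1.2.1), the one condition $(s,B^*)$ decides ``$\alpha \in \lusim{A}$'' for every $\alpha < \lambda$.

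Now the set $A' = \{\alpha < \lambda : (s,B^*) \Vdash$``$\alpha \in \lusim{A}$''$\}$ is defined from $(s,B^*)$, the name $\lusim{A}$, and $\lambda$ by a formula evaluated in $V$, hence $A' \in V$, and by construction $(s,B^*) \Vdash$``$\lusim{A} = A'$''. Finally I would pass to genericity: the argument above produces, below \emph{any} condition forcing ``$\lusim{A} \subseteq \lambda$'', a condition forcing $\lusim{A}$ to equal some ground-model set, so the collection of such conditions is dense below $(s,B)$. Therefore $G$ contains one of them, and that condition forces $\lusim{A}$ to equal a fixed element of $V$; evaluating, $A \in V$.

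The main obstacle is precisely the simultaneous-decision step: a single invocation of the Prikry property only decides one statement, so the crux is amalgamating the $\lambda$-many separate decisions into one direct extension. This is exactly what $\kappa$-closure of $\leq^*$ provides, and it is where the boundedness hypothesis is essential — were $\lambda \geq \kappa$, the recursion would have length $\geq \kappa$ and no $\leq^*$-lower bound would be guaranteed, so the argument would break down (as it must, since unbounded subsets such as $C_G$ are genuinely new).
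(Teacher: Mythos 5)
Your proof is correct and follows essentially the same route as the paper: a $\leq^*$-decreasing recursion of length $\lambda$ using the Prikry property at each step and $\kappa$-closure of $\leq^*$ at limits, ending with a single direct extension deciding ``$\alpha \in \lusim{A}$'' for all $\alpha < \lambda$. Your explicit density argument at the end tidies up a step the paper leaves implicit (the paper simply reads $A$ off from $p_\lambda$ without noting that $p_\lambda$ need not lie in $G$), but the substance of the argument is identical.
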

\begin{proof}
Let $p\in \PP_U,$ and $\l<\k$ be such that $p\Vdash$``$\lusim{A}$ is a subset of $\l$''. We build by induction a sequence $(p_\a: \a\leq \l)$ of direct extensions of $p$ such that:
\begin{enumerate}
\item $p_0=p,$
\item $\a< \b \Rightarrow p_\b \leq^* p_\a,$
\item $\forall \a<\l, p_{\a+1}\|$``$\a\in \lusim{A}$''.
\end{enumerate}
Then $A=\{\a<\l: p_\l\Vdash$``$\a\in \lusim{A}$''$  \},$ hence $A\in V.$
\end{proof}
It follows that cardinals $\leq \k$ are preserved in $V[G]$. Putting all of the above results together,  we have the following:
\begin{theorem}
Let $\k$ be a measurable cardinal, and $U$ be a normal measure on $\k.$ Then forcing with $\PP_U$ preserves cardinals and changes the cofinality of $\k$ to $\omega.$
\end{theorem}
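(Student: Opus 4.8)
The plan is to assemble this concluding theorem from the lemmas already established, handling the two halves of the conclusion separately. The cofinality statement is immediate: by Lemma 6.2.1 the sequence $C_G$ is an $\omega$-sequence cofinal in $\k$, so $\cf^{V[G]}(\k)=\omega$. It then remains to check that no cardinal is collapsed, and I would split this according to whether the cardinal in question lies above or below $\k$.

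For cardinals $\geq \k^+$, I would invoke Lemma 6.2.2, which gives that $\PP_U$ has the $\k^+$-c.c. Applying the chain-condition preservation theorem of Chapter 1 (a $\mu$-c.c. forcing with $\mu$ regular preserves all cardinals $\geq \mu$) with $\mu=\k^+$, and noting that $\k^+$ is regular, every cardinal $\geq \k^+$ is preserved. Since there are no cardinals strictly between $\k$ and $\k^+$, this settles the upper range.

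For cardinals $\leq \k$, the essential input is Lemma 6.2.7, which says that every bounded subset of $\k$ lying in $V[G]$ already lies in $V$. I would argue that this rules out any collapse below $\k$. If some cardinal $\mu<\k$ were collapsed, there would be a surjection $g$ from some $\lambda<\mu$ onto $\mu$ in $V[G]$; coding $g$ as a subset of $\lambda\times\mu$ and using a pairing function (legitimate since $\mu<\k$ and $\k$ is inaccessible, so $\mu\times\mu$ has order type below $\k$) yields a bounded subset of $\k$. By Lemma 6.2.7 this set lies in $V$, so the collapse would already occur in $V$, contradicting that $\mu$ is a cardinal of $V$. Hence all cardinals below $\k$ are preserved.

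The one point requiring care — and the step I expect to be the main obstacle in the assembly — is the preservation of $\k$ itself, since Lemma 6.2.7 speaks only of \emph{bounded} subsets and says nothing directly about subsets of $\k$ of order type $\k$. Here I would exploit that a measurable cardinal is inaccessible, hence a limit cardinal, so $\k=\sup\{\mu<\k : \mu \text{ is a cardinal}\}$. Having just shown that every such $\mu$ remains a cardinal in $V[G]$, and since a supremum of cardinals is again a cardinal, $\k$ is still a cardinal in $V[G]$ — its cofinality has merely dropped to $\omega$. Combining the three parts gives $\Card^{V[G]}=\Card^V$ together with $\cf^{V[G]}(\k)=\omega$, completing the proof.
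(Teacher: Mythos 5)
Your proposal is correct and follows essentially the same route as the paper, which likewise assembles the theorem from Lemma 6.2.1 (the cofinality of $\k$ becomes $\omega$), Lemma 6.2.2 together with the chain-condition preservation theorem of Chapter 1 applied with the regular cardinal $\k^+$ (cardinals $\geq \k^+$), and Lemma 6.2.7 on bounded subsets (cardinals $\leq \k$). Your explicit coding argument below $\k$ and your treatment of $\k$ itself --- as a limit of preserved cardinals, using that a measurable cardinal is a limit cardinal --- correctly fill in details the paper leaves implicit in the phrase ``it follows that cardinals $\leq \k$ are preserved'' and in its concluding ``putting all of the above results together.''
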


\section{A geometric characterization of Prikry sequences}
We prove a characterization of Prikry generic $\omega$-sequences due to Mathias.
\begin{theorem}
Suppose that $U$ is a normal measure on  a measurable cardinal $\k,$ and let $\PP_U$ be the associated Prikry forcing. Then a sequence $C\in [\k]^{\omega},$ in any outer model of $V$, is $\PP_U$-generic over $V$ iff $\forall A\in V \exists m \forall n\geq m, C(n)\in A.$
\end{theorem}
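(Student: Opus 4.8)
I would prove the two implications separately, since they are of very different difficulty.

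\emph{Necessity (generic $\Rightarrow$ almost-containment).} Suppose $C=C_G$ for a $\PP_U$-generic $G$, and fix $A\in U$. The set $D_A=\{(s,B)\in\PP_U : B\subseteq A\}$ is dense open: given any $(t,B')$, its direct extension $(t,B'\cap A)$ lies in $D_A$ because $B'\cap A\in U$. Picking $(s,B)\in G\cap D_A$ and recalling that $s$ is an initial segment of $C$ with $C\setminus s\subseteq B\subseteq A$, we get $C(n)\in A$ for all $n\geq\len(s)$. So this direction is immediate with $m=\len(s)$.

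\emph{Sufficiency.} Assume $C\in[\kappa]^\omega$ is almost contained in every $A\in U$. I must show the filter $G_C=\{(s,A)\in\PP_U : s\sqsubseteq C,\ C\setminus s\subseteq A\}$ meets every dense open $D\in V$ (upward closure makes meeting all dense open sets enough, and directedness of $G_C$ is routine). Fix such a $D$. First apply Lemma 6.2.5$(b)$ to obtain $A_0\in U$ with the property that for every finite stem $u$, $(\exists B\,(u,B)\in D)\iff(u,A_0\setminus(\max u+1))\in D$; call $u$ \emph{good} when the latter holds. The goal is to manufacture a single $A_D\in U$ for which the tree of non-good stems below it is well-founded; everything then falls out of well-foundedness.

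To this end I define a $U$-rank on finite stems: $\rho(u)=0$ if $u$ is good, and otherwise $\rho(u)$ is the least $\gamma$ with $\{\alpha\in A_0\setminus(\max u+1):\rho(u ^\frown\alpha)<\gamma\}\in U$, with $\rho(u)=\infty$ if no such $\gamma$ exists. The step I expect to be the real obstacle is showing $\rho(u)<\infty$ for every $u$. I would argue by contradiction: since $U$ is an ultrafilter and any $U$-many children with ordinal rank have a bounded supremum, $\rho(u)=\infty$ forces $\{\alpha:\rho(u ^\frown\alpha)=\infty\}\in U$; diagonally intersecting these witnesses via Lemma 6.2.5$(a)$ produces a condition $(u,A^\infty)$ all of whose stem-extensions inside $A^\infty$ are non-good, yet density supplies an extension of $(u,A^\infty)$ in $D$ whose stem is good by Lemma 6.2.5$(b)$ — a contradiction. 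This ``diagonal intersection against density'' is exactly the device used to prove Theorem 6.2.3, so I would model it on that argument.

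\emph{Finish.} With $\rho$ everywhere finite-valued (i.e. $<\infty$), put $B_u=\{\alpha\in A_0\setminus(\max u+1):\rho(u ^\frown\alpha)<\rho(u)\}\in U$ for non-good $u$ (and $B_u=A_0\setminus(\max u+1)$ otherwise), and set $A_D=A_0\cap\bigtriangleup_u B_u\in U$ by Lemma 6.2.5$(a)$. Then for non-good $u$ and any $\alpha\in A_D\setminus(\max u+1)$ one has $\rho(u ^\frown\alpha)<\rho(u)$. Now invoke the hypothesis on $C$: choose $s_0\sqsubseteq C$ with $C\setminus s_0\subseteq A_D$. Were every $C\restriction k$ with $k\geq\len(s_0)$ non-good, the ordinals $\rho(C\restriction k)$ would strictly decrease without end, which is impossible; hence some $s'=C\restriction k$ is good. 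Since $C\setminus s'\subseteq A_0\setminus(\max s'+1)$, the condition $(s',A_0\setminus(\max s'+1))$ belongs to $G_C\cap D$. As $D$ was an arbitrary dense open set, $G_C$ is generic, so $C$ is $\PP_U$-generic over $V$.
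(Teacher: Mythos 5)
Your proof is correct, and while your necessity direction is the same density argument as the paper's, your sufficiency direction takes a genuinely different route. The paper, after the same first move (applying Lemma 6.2.5$(b)$ to normalize $D$ to a single $A\in U$), invokes the partition theorem for normal measures (Lemma 6.1.2, Rowbottom's theorem): for each stem $t$ it colors $s\in[A\setminus(\max(t)+1)]^{<\omega}$ according to whether $(t^{\frown}s, A\setminus(\max(s)+1))\in D$, shrinks to homogeneous $A_t\in U$, sets $B=A\cap\bigtriangleup_t A_t$, and uses the almost-containment hypothesis to put a tail of $C$ inside $B$; density then yields some $(t^{\frown}s,B)\in D$ with $s\subseteq A_t$, and homogeneity transfers this to the actual initial segment $C\restriction n$, which lies in the filter generated by $C$. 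You instead run a well-founded rank analysis on ``non-good'' stems, using only the diagonal-intersection machinery of Lemma 6.2.5: your claim that $\rho(u)<\infty$ everywhere is sound (if the $\infty$-ranked children formed a $U$-small set, the ordinal ranks on a $U$-large set would be bounded, giving $\rho(u)<\infty$; otherwise diagonalizing the $\infty$-witnesses yields a condition $(u,A^{\infty})$ none of whose stem-extensions is good, contradicting density of $D$ via Lemma 6.2.5$(b)$), and the finish — a single $A_D\in U$ along which rank strictly drops at every non-good stem, so that the initial segments of $C$ cannot all be non-good — is exactly right, since $C$ is increasing and eventually inside $A_D$. What each approach buys: the paper's is shorter once Rowbottom's theorem is available and lands an initial segment of $C$ in $D$ in one homogeneity step; yours avoids partition calculus entirely and, as the paper itself remarks about the analogous proof of the Prikry property (Theorem 6.2.3), this diagonal-intersection-plus-rank style is the one that survives in generalized Prikry-type forcings where the partition theorem may be unavailable. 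The only cosmetic points: the statement's ``$\forall A\in V$'' is a misprint for $\forall A\in U$, which you (like the paper) silently correct, and your definition of $\rho$ should formally be read as defining the relation $\rho(u)\leq\gamma$ by transfinite recursion on $\gamma$ — a standard device that does not affect correctness.
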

\begin{proof}
First assume that $C$ is a Prikry generic sequence; so that $C=C_G$, for some $\PP_U$-generic $G$. Let $A\in U$ and $(s, B)\in \PP_U.$ The $(s, A\cap B)\in \PP_U$ extends $(s, B)$ and it forces ``$\lusim{C}\setminus s \subseteq A$''.

For the converse direction, let $G$ be the filter on $\PP_U$ generated by $C$, and let $D\in V$ be dense open in $\PP_U.$ By Lemma 6.2.5$(b),$ we can find $A\in U$ such that
\begin{center}
$\forall s\in [\k]^{<\omega} (\exists B (s, B)\in D \Leftrightarrow (s, A\setminus(\max(s)+1))\in D).$
\end{center}
For each $t\in [k]^{<\omega}$, define $f_t: [A\setminus (\max(t)+1)]^{<\omega} \rightarrow 2$ by
\begin{center}
 $f_t(s)\left\{
\begin{array}{l}
        0 \hspace{1.2cm} \text{ if } (t^{\frown}s, A\setminus (\max(s)+1))\in D,\\
        1 \hspace{1.2cm} \text{ if } \text{otherwise}.
     \end{array} \right.$
\end{center}
By Lemma 6.1.2, we can find $A_t\in U, A_t \subseteq A$ which is homogeneous for $f_t.$ Let $B=A\cap \bigtriangleup_t A_t.$
By our assumption, there is $m<\omega$ such that for all $n\geq m, C(n)\in B.$ Let $t=C \upharpoonright m,$ and note that if $n\geq m, C(n)\in A_t.$
As $D$ is dense, $(t, B)$ has some extension in $D$, and hence by our choice of $A,$ we can assume that it is of the form $(t^{\frown}s, B).$ But then
$s \subseteq A_t,$ so by homogeneity of $A_t,$ if $n=\len(t^{\frown}s)$ then $(C \upharpoonright n, B)\in D.$ But $(C \upharpoonright n, B)$ is also in $G$, hence $G$ meets $D$.
\end{proof}

\chapter{$HOD$ type models}
\section{G\"{o}del functions and inner models} The G\"{o}del functions are the following functional relations

\hspace{1.cm} $\mathcal{F}_1(x,y)=\{ x,y  \},$

\hspace{1.cm} $\mathcal{F}_2(x)=\in \upharpoonright x^2,$

\hspace{1.cm} $\mathcal{F}_3(x,y)=x\setminus y,$

\hspace{1.cm} $\mathcal{F}_4(x,y)=x \times y,$

\hspace{1.cm} $\mathcal{F}_5(x)=\bigcup x,$

\hspace{1.cm} $\mathcal{F}_6(x)=\dom(x),$

\hspace{1.cm} $\mathcal{F}_7(x)=\{(u,w,v): (u,v,w)\in x  \},$

\hspace{1.cm} $\mathcal{F}_8(x,y)=\{(v,u,w): (u,v,w)\in x  \}.$

\begin{lemma}
By composition one can get $x\cap y,$ $x^{-1}=\{(z,y): (y,z)\}$ and $\range(x).$
\end{lemma}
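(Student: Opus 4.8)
The three identities are all of the form ``this derived operation equals such-and-such a composition of the $\mathcal{F}_i$,'' so the plan is to write down an explicit composition in each case and verify it by chasing an arbitrary element through it. For the intersection nothing subtle happens: since $x\cap y=x\setminus(x\setminus y)$, I would simply take
\[
x\cap y=\mathcal{F}_3\big(x,\mathcal{F}_3(x,y)\big),
\]
using $\mathcal{F}_3$ (set difference) twice.

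The converse $x^{-1}=\{(b,a):(a,b)\in x\}$ is the one that requires an idea, and it is where I expect the only real obstacle to lie. The point is that the only coordinate-permuting operations available, $\mathcal{F}_7$ and $\mathcal{F}_8$, act on \emph{triples}, not on pairs, so a pair must first be promoted to a triple before it can be permuted. The plan is to append a harmless third coordinate by means of the product $\mathcal{F}_4$. Reading an ordered triple as $(u,v,w)=((u,v),w)$, if $(a,b)\in x$ then the Kuratowski pair $((a,b),q)$ occurring in $x\times x$ is exactly the triple $(a,b,q)$; applying $\mathcal{F}_8$, which sends $(u,v,w)$ to $(v,u,w)$, turns it into $(b,a,q)=((b,a),q)$, and taking the domain $\mathcal{F}_6$ discards the dummy coordinate and returns $(b,a)$. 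Thus I would set
\[
x^{-1}=\mathcal{F}_6\big(\mathcal{F}_8(\mathcal{F}_4(x,x))\big),
\]
the second argument of $\mathcal{F}_8$ being vacuous.

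The care needed here, and the heart of the verification, is that introducing a third coordinate must not silently delete any pairs: for $(a,b)\in x$ to survive the construction some $q$ must be available to serve as third coordinate. Crossing $x$ with itself handles this uniformly, because $(a,b)\in x$ already forces $x\neq\emptyset$, so a witness $q\in x$ always exists, while if $x$ contains no pairs at all both sides are empty; and any non-pair junk in $x$ is automatically ignored, since then $((a,b),q)$ is not a genuine triple and $\mathcal{F}_8,\mathcal{F}_6$ skip it. Finally, the range is read off from the converse by one further domain operation, $\range(x)=\dom(x^{-1})$, giving
\[
\range(x)=\mathcal{F}_6\big(\mathcal{F}_6(\mathcal{F}_8(\mathcal{F}_4(x,x)))\big).
\]
It then only remains to confirm each displayed equality by tracing a generic element through the composition, which is routine once the triple convention is fixed.
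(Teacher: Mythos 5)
Your proof is correct: $x\cap y=\mathcal{F}_3(x,\mathcal{F}_3(x,y))$ works, and your derivation of the converse --- promoting each pair of $x$ to a triple inside $\mathcal{F}_4(x,x)$, permuting with $\mathcal{F}_8$ under the convention $(u,v,w)=((u,v),w)$, then stripping the dummy coordinate with $\mathcal{F}_6=\dom$ --- verifies correctly, including the two points you rightly single out (a witness $q$ for the third coordinate always exists because $(a,b)\in x$ itself can serve, and non-pair elements are discarded by $\mathcal{F}_8$ and $\dom$ without harm), after which $\range(x)=\dom(x^{-1})$ is immediate. The paper states this lemma without any proof at all, so there is no argument of the paper's to compare yours against; what you give is the standard construction one would supply, and the only cosmetic discrepancy is that the paper's $\mathcal{F}_8$ is officially binary, so one should write $\mathcal{F}_8(\mathcal{F}_4(x,x),y)$ with a vacuous second argument, a point you already acknowledge.
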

\begin{lemma}
Let $n\geq 2, i, j<n, i\neq j.$ Then
\begin{center}
$\{(a_0, ..., a_{n-1})\in y^n: (a_i, a_j)\in x \}= \mathcal{F}_{i,j,n}(x,y)$
\end{center}
is obtained from $x,y$ by composition of G\"{o}del functions.
\end{lemma}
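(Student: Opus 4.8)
The plan is to argue by induction on $n$, reducing an arbitrary pair of positions $(i,j)$ to the \emph{canonical} pair $(0,1)$, for which the set is visibly a composition of the basic functions. Throughout I use the left-nested convention $(a_0,\dots,a_{n-1})=((a_0,\dots,a_{n-2}),a_{n-1})$, so that $y^n=y^{n-1}\times y=\mathcal{F}_4(y^{n-1},y)$, and I freely use $\cap$, $x^{-1}$ and $\range$ from Lemma 7.1.1. The canonical case is clean precisely because the pair $(a_0,a_1)$ is an honest subterm of the tuple: one checks that $\mathcal{F}_{0,1,n}(x,y)$ is built by starting from $x\cap(y\times y)$ and taking the product with $y$ a further $n-2$ times on the right. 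For the base case $n=2$ one has $\mathcal{F}_{0,1,2}(x,y)=(y\times y)\cap x$ and $\mathcal{F}_{1,0,2}(x,y)=(y\times y)\cap x^{-1}$, using that $(a_1,a_0)\in x$ iff $(a_0,a_1)\in x^{-1}$.

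For the inductive step, note that since $i\neq j$ at least one index is $<n-1$, and there are two easy reductions. If neither index is $n-1$, the constraint mentions only the first $n-1$ coordinates, so $\mathcal{F}_{i,j,n}(x,y)=\mathcal{F}_4(\mathcal{F}_{i,j,n-1}(x,y),y)$ and we finish by the induction hypothesis. If $i=n-1$ (so $j<n-1$), then $(a_{n-1},a_j)\in x$ iff $(a_j,a_{n-1})\in x^{-1}$, reducing to the case $j=n-1$ with $x$ replaced by $x^{-1}$. It remains to treat $j=n-1$ with $i<n-1$.

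For $i<n-2$ I use $\mathcal{F}_7$, which on left-nested $n$-tuples $(((a_0,\dots,a_{n-3}),a_{n-2}),a_{n-1})$ swaps the last two coordinates $a_{n-2}\leftrightarrow a_{n-1}$. Let $V=\mathcal{F}_{i,n-2,n-1}(x,y)$, a composition of G\"odel functions by the induction hypothesis, and put $U=V\times y=\{(a_0,\dots,a_{n-1})\in y^n:(a_i,a_{n-2})\in x\}$. Since $\mathcal{F}_7$ moves the value in slot $n-2$ to slot $n-1$ and leaves slot $i$ (which is $<n-2$) fixed, $\mathcal{F}_7(U)=\{(a_0,\dots,a_{n-1})\in y^n:(a_i,a_{n-1})\in x\}=\mathcal{F}_{i,n-1,n}(x,y)$. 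This recursion lowers $n$ while keeping $i$ fixed and $j$ at the top, so it bottoms out at the adjacent case $i=n-2$, $j=n-1$.

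The remaining, and hardest, case is the adjacent-top constraint $\{((a_0,\dots,a_{n-2}),a_{n-1}):(a_{n-2},a_{n-1})\in x\}$. With left nesting the pair $(a_{n-2},a_{n-1})$ is \emph{not} a subterm of the tuple, so one cannot merely intersect with $x$; the two coordinates must first be brought together, i.e. $((q',a_{n-2}),a_{n-1})$ with $q'=(a_0,\dots,a_{n-3})$ must be re-associated to $(q',(a_{n-2},a_{n-1}))$, the constraint imposed as intersection with $y^{n-2}\times(x\cap y^2)$, and the tuple re-associated back. I expect this to be the main obstacle: it amounts to an auxiliary \emph{permutation/associativity lemma} asserting that re-association $A\times(B\times C)\to(A\times B)\times C$, and more generally every coordinate permutation of subsets of $y^n$, is a composition of G\"odel functions. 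For triples this is exactly what $\mathcal{F}_7$ and $\mathcal{F}_8$ supply, since they realize the adjacent transpositions generating $S_3$; but for $n$-tuples one must descend into the nested first component, which requires the further closure principle that applying a G\"odel-composition to the first coordinate of a set of pairs is again a G\"odel-composition. Setting up this closure and tracking the pairing encoding through the recursion is the delicate part; once it is in hand, every $(i,j)$ is a coordinate permutation of the canonical $(0,1)$ set, completing the induction.
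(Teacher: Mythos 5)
Your overall skeleton---induction on $n$, with the product reduction $\mathcal{F}_{i,j,n}(x,y)=\mathcal{F}_4(\mathcal{F}_{i,j,n-1}(x,y),y)$ when neither index is $n-1$, the passage to $x^{-1}$ when $i=n-1$, and the $\mathcal{F}_7$-trick pulling $(i,n-1)$ down to $(i,n-2)$ in dimension $n-1$---is correct, and it is exactly the induction the paper gestures at (the paper's entire proof is the single sentence ``The proof is by induction on $n$''). But there is a genuine gap: the adjacent case $i=n-2$, $j=n-1$, which you rightly identify as the crux and to which all your reductions funnel, is never actually proved. You only sketch a plan (re-associate to $(q',(a_{n-2},a_{n-1}))$, intersect with $y^{n-2}\times(x\cap y^2)$, re-associate back) and defer it to an unproved ``permutation/associativity closure'' lemma. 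Moreover the sketch points in the wrong direction: your intermediate object consists of \emph{right}-nested pairs $(q',(a_{n-2},a_{n-1}))$, which are not triples in the left-nested convention, so $\mathcal{F}_7$ and $\mathcal{F}_8$ do not act on them, and the general closure principle you invoke (applying a G\"odel composition inside the first coordinate of a set of pairs) is extra machinery that you neither establish nor need.

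In fact the adjacent case closes in two lines with no auxiliary lemma, because with left nesting the $n$-tuple is \emph{already} a triple of blocks: for $n\geq 3$, $(a_0,\dots,a_{n-1})=((q,a_{n-2}),a_{n-1})=(q,a_{n-2},a_{n-1})$ where $q=(a_0,\dots,a_{n-3})$ is treated as a single object. Set $W=(x\cap(y\times y))\times y^{n-2}$; its elements are the left-nested triples $(b,c,q)=((b,c),q)$ with $b,c\in y$, $(b,c)\in x$, $q\in y^{n-2}$. Then $\mathcal{F}_7(W)$ consists of the triples $(b,q,c)$, and $\mathcal{F}_8(\mathcal{F}_7(W))$ of the triples $(q,b,c)=((q,b),c)$, which is precisely $\{(a_0,\dots,a_{n-1})\in y^n:(a_{n-2},a_{n-1})\in x\}$. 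So $\mathcal{F}_{n-2,n-1,n}(x,y)=\mathcal{F}_8\bigl(\mathcal{F}_7\bigl((x\cap(y\times y))\times y^{n-2}\bigr)\bigr)$: the coordinates are ``brought together'' not by descending into the nested component $q$, but by building the constrained pair first and permuting it into place as one slot of a triple. With this plug-in your induction is complete; your stronger claim that every coordinate permutation of a subset of $y^n$ is a G\"odel composition is true but is not needed for this lemma.
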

The proof is by induction on $n.$
\begin{theorem}
Let $\phi(v_1, ..., v_n)$ a formula with free variables as shown; the value of $\phi$ in the structure $(x, \in \upharpoonright x^2)$ is given by a fixed functional $Val(\phi; x)$ which is a combination of G\"{o}del functions.
\end{theorem}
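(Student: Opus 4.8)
The plan is to induct on the complexity of $\phi$, after first putting it into a normal form built only from $\neg$, the binary connective $\wedge$, and the quantifier $\exists$ (writing $\forall$ as $\neg\exists\neg$, $\vee$ by De Morgan, and so on). For a formula $\phi$ whose free variables are among $v_1,\dots,v_n$ I will carry along the satisfaction set $\mathrm{Val}(\phi;x)=\{(a_1,\dots,a_n)\in x^n : (x,\in\upharpoonright x^2)\models\phi(a_1,\dots,a_n)\}$ and exhibit, by induction, a term in $\mathcal{F}_1,\dots,\mathcal{F}_8$ with the single free input $x$ that evaluates to it. The two engines of the induction are already available: $\in\upharpoonright x^2=\mathcal{F}_2(x)$; Lemma 7.1.1 supplying $\cap$, converse and $\mathrm{range}$; and Lemma 7.1.2 supplying the selector $\mathcal{F}_{i,j,n}(r,y)=\{(a_0,\dots,a_{n-1})\in y^n : (a_i,a_j)\in r\}$.

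For the atomic case $v_i\in v_j$ (with $i\neq j$), Lemma 7.1.2 gives at once $\mathrm{Val}(v_i\in v_j;x)=\mathcal{F}_{i,j,n}(\mathcal{F}_2(x),x)$. For $v_i=v_j$ the same lemma reduces matters to producing the diagonal $\Delta_x=\{(a,a):a\in x\}$, after which $\mathrm{Val}(v_i=v_j;x)=\mathcal{F}_{i,j,n}(\Delta_x,x)$; the content is then to realise $\Delta_x$ itself as a composition of the $\mathcal{F}$'s. When $x$ is transitive --- the only case needed for building the constructible levels --- this is avoidable: extensionality lets one rewrite $v_i=v_j$ as the bounded membership formula $\forall w\,(w\in v_i\leftrightarrow w\in v_j)$, whose value is then produced by the inductive clauses below.

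The inductive clauses themselves are short. Negation is complementation inside the ambient product, $\mathrm{Val}(\neg\phi;x)=\mathcal{F}_3(x^{(n)},\mathrm{Val}(\phi;x))$, where $x^{(n)}$ is the $n$-fold product assembled from $\mathcal{F}_4$. Conjunction is intersection, $\mathrm{Val}(\phi\wedge\psi;x)=\mathrm{Val}(\phi;x)\cap\mathrm{Val}(\psi;x)$, using the $\cap$ of Lemma 7.1.1; here one first pads the two values up to a common variable index set by taking products with extra factors of $x$ and permuting coordinates so the shared variables line up. Existential quantification is projection: for $\phi=\exists v_n\,\psi(v_1,\dots,v_n)$ the set $\mathrm{Val}(\phi;x)$ is the image of $\mathrm{Val}(\psi;x)\subseteq x^n$ under $\mathcal{F}_6=\mathrm{dom}$, since an $n$-tuple is coded as a pair whose second entry is $a_n$; to quantify an interior variable $v_i$ I first move it to the last coordinate using the triple-swaps $\mathcal{F}_7,\mathcal{F}_8$, extended to arbitrary coordinate permutations of $n$-tuples exactly as in the proof of Lemma 7.1.2, and then project.

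The propositional and quantifier bookkeeping is routine; the genuinely delicate points are two. The first is the equality atom: obtaining $\Delta_x$ from $x$ by the eight operations alone is not the naive ``$x^2$ minus membership in both directions'', since distinct elements of a non-transitive $x$ need not be $\in$-comparable, so this step must either invoke the transitivity reduction above or a specific composition realising $\Delta_x$. The second is keeping the coordinate conventions honest: every padding, permutation and projection has to respect one fixed coding of $n$-tuples, and it is here --- rather than in any single clause --- that the argument must be written carefully, with the permutation lemma underlying Lemma 7.1.2 doing the real work.
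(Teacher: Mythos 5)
Your proposal follows essentially the same route as the paper's (very terse) proof: induction on the structure of the formula, with Lemma 7.1.2 disposing of the membership atoms, set difference and intersection handling the connectives, and coordinate permutation via $\mathcal{F}_7,\mathcal{F}_8$ followed by projection handling $\exists$ (you project with $\mathcal{F}_6=\dom$, the paper with $\range$; the discrepancy is only a choice of tuple-coding convention). You are in fact more careful than the paper, which silently ignores equality atoms: your observation that $v_i=v_j$ requires either the transitivity reduction (which suffices for every use of $Val$ in this chapter, since it is only ever applied to transitive sets such as $V_\xi$ and $V_\xi\cap M$) or a direct composition producing the diagonal $\Delta_x$ (which does exist, e.g. by realising $\{(u,v)\in x^2 : u\subseteq v \wedge v\subseteq u\}$ with quantification bounded by $\bigcup x = \mathcal{F}_5(x)$) closes a genuine gap in the paper's sketch.
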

\begin{proof}
The proof is by induction on the length of formulas. For example:

{\bf Atomic case:} $Val(v_i\in v_j; x^n)=\mathcal{F}_{i,j,n}(x^n, x).$

{\bf $\exists$ case:} $Val(\exists v\phi)=\range(Val(\phi)).$
\end{proof}
\begin{theorem}
Let $M$ be a transitive class; assume that

$(a)$ $M$ is closed under G\"{o}del operations.

$(b)$ $\forall \xi, V_\xi\cap M$ is a set of $M$.

Then $M$ is an inner model.
\end{theorem}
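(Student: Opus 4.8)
The plan is to verify that $M$ satisfies each axiom of $ZF$, since transitivity is already assumed and containment of all ordinals will be extracted along the way. First I would dispose of the axioms that need nothing beyond the hypotheses: Extensionality and Foundation hold in every transitive class because $\in$ on $M$ is the genuine membership relation; Pairing and Union are exactly closure under $\mathcal{F}_1$ and $\mathcal{F}_5$; and $\emptyset=\mathcal{F}_3(x,x)\in M$ (so $M\neq\emptyset$), while the successor operation $x\mapsto x\cup\{x\}$ is a composition of $\mathcal{F}_1$ and $\mathcal{F}_5$, giving all finite ordinals and hence $\omega\subseteq M$. The remaining axioms --- Separation, Power Set, Infinity, Replacement --- all reduce, via the almost-universality packaged in hypothesis $(b)$, to the single nontrivial schema of Separation, so the heart of the argument is to prove Separation in $M$.

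The two engines I would use are the following. First, the preceding theorem on $\mathrm{Val}(\phi;x)$, which says that for any $Z\in M$ the set $\{\vec a\in Z^n:(Z,\in\upharpoonright Z^2)\models\phi(\vec a)\}$ is a fixed combination of G\"odel functions applied to $Z$, and hence lies in $M$; this handles satisfaction whose quantifiers are bounded to a member of $M$. Second, a reflection principle: note that $M=\bigcup_{\xi}(V_\xi\cap M)$, and that the sequence $\langle V_\xi\cap M:\xi\in\mathrm{Ord}\rangle$ is increasing and continuous, with every term a transitive set lying in $M$ by hypothesis $(b)$. The Montague--L\'evy reflection theorem, applied in $V$ to this chain, then yields for any given formula $\phi$ and any parameters $\vec p\in M$ an ordinal $\xi$ with $\vec p\in V_\xi\cap M$ such that $\phi$ is absolute between the transitive set $Z:=V_\xi\cap M\in M$ and the class $M$.

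With both engines in hand, Separation is immediate: given $a,\vec p\in M$ and a formula $\phi$, choose $Z=V_\xi\cap M\in M$ as above with $a,\vec p\in Z$ (so $a\subseteq Z$ by transitivity). Then $\{x\in a:M\models\phi(x,\vec p)\}=\{x\in a:(Z,\in\upharpoonright Z^2)\models\phi(x,\vec p)\}$, and the right-hand side is obtained from $\mathrm{Val}(\phi;Z)\in M$ by fixing the parameter coordinates to $\vec p$ and intersecting with $a$ --- all G\"odel operations --- so it belongs to $M$. From Separation the rest follows routinely: Power Set because $\mathcal{P}(a)^M=\{x\in V_{\mathrm{rank}(a)+1}\cap M:x\subseteq a\}$ and $V_{\mathrm{rank}(a)+1}\cap M\in M$; Infinity because $\omega=\{x\in V_\omega\cap M:x\text{ is an ordinal}\}$; and, by induction on $\xi$, every ordinal lies in $M$ (at a limit $\lambda$ one has $\lambda=\{x\in V_\lambda\cap M:x\text{ is an ordinal}\}\in M$). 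Replacement I would get through Collection: reflecting the relevant formula to some $Z=V_\xi\cap M\in M$ large enough to contain witnesses for every $x\in a$ provides a bounding set, and Separation then carves out the exact image.

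The step I expect to be the main obstacle is the proof of Separation, specifically the interface between the two engines: one must check that satisfaction $\phi^M$ really does coincide with satisfaction in the reflecting structure $(Z,\in\upharpoonright Z^2)$ for all relevant tuples (this is exactly what reflection along the continuous chain $\langle V_\xi\cap M\rangle$ buys, and it is where hypothesis $(b)$ is essential, since it guarantees the reflecting set is itself a member of $M$), and then that the resulting definable subset can be expressed as a composition of G\"odel functions applied to $Z$, $a$ and the parameters. Everything else is either automatic from transitivity or a short deduction from Separation together with $(b)$.
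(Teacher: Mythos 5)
Your proposal is correct and follows essentially the same route as the paper: everything is reduced to Separation, which is proved by reflecting the formula along the increasing continuous chain $\langle V_\xi\cap M:\xi\in\mathrm{Ord}\rangle$ (each term a set of $M$ by hypothesis $(b)$) and then computing the reflected definable set inside $M$ via $\mathrm{Val}(\phi;V_\xi\cap M)$ and G\"odel operations, with Power Set and Replacement bounded through $(b)$ and finished by Separation. The only cosmetic divergence is your treatment of the ordinals (induction, using Separation at limit stages) versus the paper's contradiction argument computing $M\cap\mathrm{Ord}$ as a $\mathrm{Val}$-value of $V_\xi\cap M$; this changes nothing essential.
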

\begin{proof}
The replacement axiom  should be the most difficult to prove. Now if
$\phi(v, w, v_1, ..., v_n)$ is a functional from parameters $a_1, ..., a_n$ and if $a$ is given, the image of $a$ under $\phi$ is a set in $V$, hence it is a subset of some $V_\xi$ and $M$, therefore it is a subset of some $x\in M.$ Therefore it is enough to apply comprehension; the same argument works for the axioms such as the power set. Finally everything backs down to comprehension. Thus we are given a set $a$, a formula $\psi(v, v_1, ..., v_n)$
and parameters $a_1, ..., a_n$. We apply the reflection principle to the class $M$ and the function relation $V_\xi\cap M;$ actually this is a generalized reflection principle:
\\
{\bf Generalized reflection principle:} Let $W$ be a class in $V$, let $W_\xi$ be a functional relation with domain $ON$ which is increasing and continuous. Let $\phi_1, ..., \phi_n$ be formulas of the language $ZFC$. For every ordinal $\xi$ there exists $\eta> \xi$ such that $W_\eta$ reflects $\phi_1, ..., \phi_n$ relative to $W$.

Thus if $\xi$ is chosen above the ranks of the parameters $a_1, ..., a_n$ and the given set $a$, we get for $b\in a$
\begin{center}
$M\models$``$\phi(b, a_1, ..., a_n)$'' iff $V_\xi\cap M\models$``$\phi(b, a_1, ..., a_n)$''.
\end{center}
Thus the required set is
\begin{center}
$Val(\phi; V_\xi\cap M) \cap (V_\xi \cap x) \times \{a_1\} \times ... \times \{a_n\}.$
\end{center}
But using closure under G\"{o}del operations
and the fact that $V_\xi\cap M$ is a set of $M$ we get $a$ a set in $M$.

The last thing to check is that $M$ contains all ordinals, otherwise $M\cap ON$ is an ordinal $\xi.$ But then
\begin{center}
$\xi=Val(\xi$ is an ordinal$; V_\xi\cap M)$
\end{center}
hence $\xi\in M,$ contradiction.
\end{proof}

\section{Ordinal definability} A set $a$ is definable if there exists a formula $\phi(v, v_1, ..., v_n),$ parameters $a_1, ..., a_n$ such that $a$ is the unique element  satisfying $\phi(a, a_1, ..., a_n)$ (this is equivalent to saying that for some formula $\psi(v, v_1, ..., v_n)$, $a$ is exactly $\{b: \psi(b, a_1, ..., a_n)$''$  \}$).

The definable sets (say without parameters) do not form a class. We shall see that if the parameters are chosen from another classes, it is the case.

We let $OD(X)$ be the class consisting of sets definable from ordinals and of a given class $X$.
\begin{lemma}
$OD(X)$ is closed under G\"{o}del functions.
\end{lemma}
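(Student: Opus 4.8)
$OD(X)$ is closed under Gödel functions.

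The plan is to exploit the characterization of $OD(X)$ as the class of sets definable from ordinal parameters and the class $X$, and to verify closure one Gödel function at a time by showing that if the inputs are ordinal-definable (allowing $X$ as a parameter), then so is the output. The guiding principle is that Gödel-definability is preserved under the Gödel operations essentially because each $\mathcal{F}_i$ is itself given by an explicit first-order formula in the language $\{\in\}$ (with a predicate for $X$), so composing a defining formula for the inputs with the graph-formula of $\mathcal{F}_i$ yields a defining formula for the output.

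First I would fix the precise meaning of $OD(X)$: a set $a$ lies in $OD(X)$ iff there is a formula $\psi(v,v_1,\dots,v_n)$ and ordinals $\alpha_1,\dots,\alpha_n$ such that $a$ is the unique set satisfying $\psi(a,\alpha_1,\dots,\alpha_n)$, where $\psi$ is allowed to use the predicate symbol interpreted by $X$. Then I would take a generic Gödel function, say the binary $\mathcal{F}_k(x,y)$, and suppose $a=\mathcal{F}_k(b,c)$ with $b,c\in OD(X)$, witnessed by formulas $\psi_b(v,\vec{\alpha})$ and $\psi_c(v,\vec{\beta})$ defining $b$ and $c$ from ordinals. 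Since $\mathcal{F}_k$ is a definable functional relation, its graph $w=\mathcal{F}_k(u,u')$ is expressed by some $\{\in\}$-formula $\chi_k(w,u,u')$. Then the formula
\begin{center}
$\theta(v,\vec{\alpha},\vec{\beta}) \;\equiv\; \exists u\,\exists u'\,\bigl(\psi_b(u,\vec{\alpha}) \wedge \psi_c(u',\vec{\beta}) \wedge \chi_k(v,u,u')\bigr)$
\end{center}
defines $a$ uniquely from the ordinal parameters $\vec{\alpha},\vec{\beta}$, so $a\in OD(X)$. The unary cases ($\mathcal{F}_2,\mathcal{F}_5,\mathcal{F}_6$) are handled identically with one existential witness instead of two.

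The only point requiring genuine care — and what I expect to be the main obstacle — is ensuring that the quantifiers in $\theta$ range correctly so that $\theta$ still defines $a$ over $V$ and not over some restricted domain; in particular I must confirm that the witnessing formulas $\psi_b,\psi_c$ remain defining formulas when embedded under the extra existential quantifiers, which is immediate from their uniqueness clauses but should be stated. A secondary subtlety is that $OD(X)$ as defined uses ordinal parameters only, so I must check that the combined parameter list $\vec{\alpha}^\frown\vec{\beta}$ is still a tuple of ordinals (it is, being a concatenation) and that allowing the predicate for $X$ inside $\chi_k$ or $\psi_b,\psi_c$ causes no problem, since $X$ is a fixed class and needs no parameter. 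Once these bookkeeping points are settled, closure under each of $\mathcal{F}_1,\dots,\mathcal{F}_8$ follows uniformly, and hence $OD(X)$ is closed under Gödel functions.
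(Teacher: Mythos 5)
Your proposal is correct and follows essentially the same route as the paper: both close $OD(X)$ under each Gödel operation by composing the defining formulas of the inputs under existential quantifiers with a formula expressing the operation, exactly as the paper illustrates for $\mathcal{F}_4$ with $\exists v\,\exists v'\,(\phi(v,a_1,\dots,a_n)\wedge\psi(v',b_1,\dots,b_n)\wedge u=(v,v'))$. The only cosmetic difference is that you use the ``unique element satisfying $\psi$'' form of definability with the graph formula $\chi_k$, while the paper uses the equivalent extension form $a=\{b:\psi(b,\vec{a})\}$, an equivalence the paper itself records when defining definable sets.
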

This is easy to prove. For example if $x$ is defined by $\phi(v, a_1, ..., a_n)$ and $y$ is defined by $\psi(v, b_1, ..., b_n),$ then $x\times y$ is defined by $\Gamma(u,...):$
\begin{center}
$\exists v \exists v' \phi(v, a_1, ..., a_n) \wedge \psi(v', b_1, ..., b_n) \wedge u=(v, v').$
\end{center}
\begin{lemma}
Every element in $OD(X)$ can be obtained from elements of $ON\cup X$ and some $V_\xi$ by applying G\"{o}del functions.
\end{lemma}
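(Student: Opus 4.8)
The plan is to reproduce, in the present setting, the mechanism used in the proof of the previous theorem (that a transitive class closed under G\"odel operations and satisfying $V_\xi\cap M\in M$ is an inner model): combine the generalized reflection principle with the functional $Val(\phi;x)$ supplied by the $Val$-theorem. Suppose $a\in OD(X)$, so that there are a formula $\phi(v,v_1,\dots,v_n)$ and parameters $a_1,\dots,a_n\in ON\cup X$ for which $a$ is the unique $b$ satisfying $\phi(b,a_1,\dots,a_n)$. The first task is to trap $a$, together with its defining condition, inside a single $V_\xi$.

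First I would apply the generalized reflection principle with the class $W=V$ and the increasing continuous functional $W_\xi=V_\xi$, to the formula $\phi$ and the finitely many subformulas needed to compute its value. Choosing $\xi$ above the rank of $a$ and above the ranks of all the parameters $a_1,\dots,a_n$, and large enough that $V_\xi$ reflects $\phi$ relative to $V$, I obtain for every $b\in V_\xi$:
\begin{center}
$V\models$``$\phi(b,a_1,\dots,a_n)$'' iff $V_\xi\models$``$\phi(b,a_1,\dots,a_n)$''.
\end{center}
Since $a\in V_\xi$ and $a$ is the unique witness in $V$, it follows that $a$ is the \emph{unique} element of $V_\xi$ with $V_\xi\models$``$\phi(a,a_1,\dots,a_n)$'': existence passes downward from $V$ to $V_\xi$, and uniqueness passes upward, since any rival witness in $V_\xi$ would be a witness in $V$ and hence equal to $a$.

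Next I would invoke the $Val$-theorem to express satisfaction in the structure $(V_\xi,\in\restricted V_\xi^2)$ by G\"odel functions: $Val(\phi;V_\xi)$ is exactly the set of tuples $(b,c_1,\dots,c_n)\in V_\xi^{\,n+1}$ with $V_\xi\models$``$\phi(b,c_1,\dots,c_n)$'', and it is built from $V_\xi$ by G\"odel operations. Intersecting with the fixed fibre over $(a_1,\dots,a_n)$,
\begin{center}
$Val(\phi;V_\xi)\cap\big(V_\xi\times\{a_1\}\times\cdots\times\{a_n\}\big)$,
\end{center}
yields the singleton $\{(a,a_1,\dots,a_n)\}$; here each singleton $\{a_i\}=\mathcal{F}_1(a_i,a_i)$, the products come from $\mathcal{F}_4$, and the intersection is available by the lemma giving $x\cap y$ and $\range$ from the $\mathcal{F}_i$ by composition. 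Finally, fixing the convention that $(n+1)$-tuples are iterated ordered pairs, repeated application of $\dom=\mathcal{F}_6$ peels off the coordinates $a_n,\dots,a_1$ and leaves $\{a\}$, whereupon $\bigcup=\mathcal{F}_5$ returns $a$. Every ingredient used is $V_\xi$, the parameters $a_i\in ON\cup X$, and G\"odel functions, which is exactly the assertion.

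I expect the only delicate point to be the reflection step: one must reflect enough of $\phi$ and choose $\xi$ above the rank of $a$ so that $a$ does not merely satisfy $\phi$ in $V_\xi$ but remains the \emph{unique} such element there, for it is this uniqueness that lets one read $a$ off the singleton and identify it with a fixed G\"odel combination of $V_\xi$ and the parameters. The recovery of the first coordinate by $\dom$ and $\bigcup$ is then routine.
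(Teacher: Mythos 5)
Your proof is correct and takes essentially the same route as the paper: reflect $\phi$ in some $V_\xi$ containing $a$ and the parameters, then use the $Val$ theorem to realize the defining condition as a G\"odel combination of $V_\xi$ and the parameters from $ON\cup X$. The only cosmetic difference is that the paper works with the extension formulation $a=\{b:\phi(b,a_1,\dots,a_n)\}$ and reads $a$ off as $Val(\phi;V_\xi)$ directly, whereas you use the unique-witness formulation and recover $a$ from a singleton via $\dom$ and $\bigcup$ --- the same fibre-intersection-and-projection maneuver the paper itself uses in the replacement step of its inner model theorem.
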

\begin{proof}
This is because of the reflection principle. If $a=\{b: \phi(b, a_1, ..., a_n)\}.$ Then some $V_\xi$ reflects $\phi,$ large enough to include $a_1, ..., a_n.$ Thus $a=Val(\phi; V_\xi),$ and the result follows.
\end{proof}
For any class $X$, we let $X^{<\omega}$ be the class of finite sequences of elements of $X$.
\begin{theorem}
$OD(X^{<\omega})$ is closed under G\"{o}del functionals.
\end{theorem}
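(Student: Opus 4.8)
The plan is to reduce the statement to the preceding Lemma 7.2.1 (``$OD(X)$ is closed under G\"odel functions''), applied with the class $X^{<\omega}$ in place of $X$. Since $X^{<\omega}$ is a definable class whenever $X$ is, every step of that earlier argument transcribes directly, and the only genuinely new ingredient is a bookkeeping fact about sequence-parameters, which I isolate below.

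First I would fix $u,w\in OD(X^{<\omega})$ together with defining formulas: say $u$ is the unique $v$ with $\phi(v,\vec{\alpha},\vec{s})$ and $w$ the unique $v$ with $\psi(v,\vec{\beta},\vec{t})$, where $\vec{\alpha},\vec{\beta}$ are tuples of ordinals and $\vec{s},\vec{t}\in X^{<\omega}$. Exactly as in the proof of Lemma 7.2.1 for $x\times y$, for each G\"odel functional $\mathcal{F}_i$ the value $\mathcal{F}_i(u,w)$ is the unique set picked out by a formula $\Gamma$ that names $u$ and $w$ via $\phi$ and $\psi$ and then applies the (absolute, G\"odel-definable) graph of $\mathcal{F}_i$; the unary cases $\mathcal{F}_2,\mathcal{F}_5,\mathcal{F}_6$ are handled identically with a single defining formula. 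The parameters occurring in $\Gamma$ are the ordinals $\vec{\alpha},\vec{\beta}$ together with the two sequences $\vec{s},\vec{t}$.

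The crux is to absorb $\vec{s}$ and $\vec{t}$ into a single parameter from $X^{<\omega}$. Here I would use that $X^{<\omega}$ is closed under concatenation: $\vec{s}\append\vec{t}$ is again a finite sequence of elements of $X$, hence an element of $X^{<\omega}$, and both $\vec{s}$ and $\vec{t}$ are recovered from $\vec{s}\append\vec{t}$ by G\"odel-definable projections once the ordinal $\len(\vec{s})$ is known. Rewriting $\Gamma$ so that it first splits its parameter $\vec{s}\append\vec{t}$ at coordinate $\len(\vec{s})$ and then proceeds as before, I obtain a formula defining $\mathcal{F}_i(u,w)$ from ordinals together with the single element $\vec{s}\append\vec{t}\in X^{<\omega}$. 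This exhibits $\mathcal{F}_i(u,w)\in OD(X^{<\omega})$, the desired closure.

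The main obstacle is exactly this last point, and it is also the reason one works with $X^{<\omega}$ rather than $X$: combining two definitions forces finitely many parameters from the underlying class to appear simultaneously, and only after passing to finite sequences is the parameter class closed under the tupling this produces. Verifying that concatenation and the splitting projections are themselves obtainable from G\"odel functions (uniformly in the ordinal lengths) is routine given Lemma 7.1.2 and Theorem 7.1.3, so once this coding is in place no further work is needed; the remaining atomic and inductive bookkeeping is identical to that already carried out for $OD(X)$.
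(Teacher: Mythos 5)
Your proof is correct, and its key device --- concatenating the finitely many parameters from $X^{<\omega}$ into a single element of $X^{<\omega}$, recoverable by splitting at positions recorded as ordinal parameters --- is precisely the central observation of the paper's proof. The two arguments are organized differently, however, and the difference is worth spelling out. You argue pairwise: combine the two defining formulas as in the proof of the lemma that $OD(X)$ is closed under G\"odel functions, then absorb $\vec{s},\vec{t}$ into $\vec{s}\append\vec{t}$. Note that for the closure statement taken literally, this absorption step is not even needed: $OD(Y)$ for \emph{any} class $Y$ admits finitely many parameters from $ON\cup Y$, so the earlier lemma applies verbatim with $X^{<\omega}$ in place of $X$; your stated ``crux'' (that only after passing to finite sequences is the parameter class closed under the tupling) is therefore mislocated as a motivation. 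What the paper actually establishes under this heading is the stronger uniform normal form that the G\"odel-term enumeration immediately afterwards depends on: every element of $OD(X^{<\omega})$ is obtained by G\"odel functions from a \emph{single} ordinal $\delta$, a \emph{single} element of $X^{<\omega}$, and some $V_\xi$ (via the reflection lemma stating that members of $OD(X)$ arise from $ON\cup X$ and some $V_\xi$ by G\"odel functions). For that the paper performs one packing step you omit: finitely many ordinal parameters $\xi_1>\cdots>\xi_p$ are merged into the single ordinal $\omega^{\xi_1}+\cdots+\omega^{\xi_p}$, alongside the concatenation trick you describe (into this single ordinal one can also absorb the splitting lengths such as $\len(\vec{s})$). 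So your argument proves the theorem as stated, and by essentially the paper's idea; to support the coding of each element of $OD(X^{<\omega})$ by a quadruple $(t_i,\delta,a,\xi)$ used in the sequel, you would need to add this ordinal-merging step.
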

\begin{proof}
We have to perform some closure under G\"{o}del functions; but this has to be done inside set theory. Observe that if $b$ is definable from ordinals $\xi_1 > ... > \xi_p$ and members of $X^{<\omega},$ it is defined using a single ordinal $\omega^{\xi_1}+ ... + \omega^{\xi_p}$ and a single element of $X^{<\omega}.$ Hence Lemma 5.2.2 becomes `` $b$ is defined from some ordinal $\delta,$ some element of $X^{<\omega}$ and some $V_\xi$ by G\"{o}del functions''.
\end{proof}
A G\"{o}del term is a function defined on an integer $q+1$ such that $f(k)$ is
\begin{itemize}
\item an integer $0,1,2,$ or
\item A pair $(i, n), i=2,5,6,7,8, n<k,$ or
\item A tuple $(i, n, p), i=1,3,4, n,p<k.$
\end{itemize}
The value of a G\"{o}del term on $\delta, a, \xi$ is a function obtained by induction on $k\leq q$
\begin{center}
 $\nu(k)= \left\{
\begin{array}{l}
        \delta \hspace{3.5cm} \text{ if } f(k)=0,\\
        a \hspace{3.5cm} \text{ if } f(k)=1,\\
        V_\xi \hspace{3.4cm} \text{ if } f(k)=2,\\
        \mathcal{F}_i(\nu(n)) \hspace{2.45cm} \text{ if } i=2,5,6,7,8, f(k)=(i,n),\\
        \mathcal{F}_i(\nu(n), \nu(p)) \hspace{1.65cm} \text{ if } i=1,3,4, f(k)=(i,n, p).
     \end{array} \right.$
\end{center}
The final value of the term is $\nu(q).$ We enumerate all terms $( t_j: j\in\omega  ).$ Now the final value is definable from $\delta, a, \xi$ and the index of the term. So
\begin{center}
$(f\nu)[ON\times X^{<\omega}\times ON] \subseteq OD(X^{<\omega}).$
\end{center}
But by the lemma the converse also holds.
\begin{definition}
$HOD(X)$ is the set of elements $a$ such that $tcl(\{a\}) \subseteq OD(X).$
\end{definition}
It is a class as soon as $OD(X)$ is a class.
\begin{lemma}
$HOD(X)$  is closed under G\"{o}del functions.
\end{lemma}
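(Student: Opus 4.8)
The plan is to read off the conclusion directly from the defining property $a\in HOD(X)\iff tcl(\{a\})\subseteq OD(X)$, combined with the already-established closure of $OD(X)$ under G\"{o}del functions (the preceding lemma). Given inputs $a,b\in HOD(X)$, I must show $tcl(\{\mathcal{F}_i(a,b)\})\subseteq OD(X)$ for each binary $\mathcal{F}_i$, and likewise for the unary operations. First I would record two trivial facts: (i) each $a\in HOD(X)$ is itself in $OD(X)$, since $a\in tcl(\{a\})$; and (ii) $HOD(X)$ is transitive, so membership descends. For the output set itself, $\mathcal{F}_i(a,b)\in OD(X)$ is immediate from the closure of $OD(X)$ under G\"{o}del functions applied to $a,b\in OD(X)$. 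The real work is to check that every member of $tcl(\mathcal{F}_i(a,b))$ also lies in $OD(X)$, and here the eight operations split into two kinds.

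For the \emph{non-expanding} operations $\mathcal{F}_1$ (pairing), $\mathcal{F}_3$ (set difference), $\mathcal{F}_5$ (union), and $\mathcal{F}_6$ (domain), every element of $tcl(\mathcal{F}_i(a,b))$ already sits inside $tcl(\{a\})\cup tcl(\{b\})$: set difference only discards elements, $\bigcup a$ and $\dom(a)$ produce sets already occurring a level or two down in $tcl(a)$, and $tcl(\{a,b\})=\{a,b\}\cup tcl(a)\cup tcl(b)$. Hence $tcl(\mathcal{F}_i(a,b))\subseteq tcl(\{a\})\cup tcl(\{b\})\subseteq OD(X)$ with nothing new to verify, and combining with the output itself being in $OD(X)$ gives $\mathcal{F}_i(a,b)\in HOD(X)$.

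The main obstacle is the \emph{pair-creating} operations $\mathcal{F}_2$ ($\in$ restricted to $a^2$), $\mathcal{F}_4$ (Cartesian product), and $\mathcal{F}_7,\mathcal{F}_8$ (coordinate permutations of triples), whose outputs contain brand-new ordered pairs $(u,v)=\{\{u\},\{u,v\}\}$ that need not belong to $tcl(\{a\})\cup tcl(\{b\})$. For these the key observation is that the constituents $u,v$ always lie in $tcl(\{a\})\cup tcl(\{b\})\subseteq OD(X)$, while the auxiliary sets built from them are G\"{o}del-definable from them: $\{u\}=\mathcal{F}_1(u,u)$, $\{u,v\}=\mathcal{F}_1(u,v)$, and $(u,v)=\mathcal{F}_1(\{u\},\{u,v\})$. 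Since $OD(X)$ is closed under G\"{o}del functions, all of these auxiliary sets land in $OD(X)$, so every element of $tcl(\mathcal{F}_i(a,b))$ is in $OD(X)$. Assembling the two kinds of cases with the fact that the output is in $OD(X)$ yields $tcl(\{\mathcal{F}_i(a,b)\})\subseteq OD(X)$, i.e. $\mathcal{F}_i(a,b)\in HOD(X)$. The only delicate bookkeeping is for $\mathcal{F}_7$ and $\mathcal{F}_8$, where the elements are triples, but this reduces to the same argument applied to the nested ordered pairs coding a triple, so it adds no genuine new difficulty.
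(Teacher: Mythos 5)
Your proof is correct. The paper in fact states this lemma without any proof at all, so there is no argument to compare against; what you have written is the standard argument the author leaves to the reader, and it fills the gap soundly. Your decomposition is the right one: the only genuine issue is that $\mathcal{F}_2$, $\mathcal{F}_4$, $\mathcal{F}_7$, $\mathcal{F}_8$ manufacture new ordered pairs that need not lie in $tcl(\{a\})\cup tcl(\{b\})$, and you dispose of it correctly by noting that the constituents of these pairs do lie there, hence in $OD(X)$, and that the singletons, doubletons and nested pairs are then reached from them by iterating $\mathcal{F}_1$, which $OD(X)$ absorbs by the preceding lemma. Two small points you use implicitly and could state: that $tcl(\{b\})\subseteq tcl(\{a\})$ whenever $b\in tcl(\{a\})$ (this is what makes your fact (ii) true and guarantees that everything strictly below the new pairs already sits in $tcl(\{a\})\cup tcl(\{b\})$), and that the output set itself, being an element of $tcl(\{\mathcal{F}_i(a,b)\})$, must also be checked to lie in $OD(X)$ --- which you do handle via closure of $OD(X)$. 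Neither is a gap, just bookkeeping worth making explicit.
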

\begin{theorem}
Let $X$ be a class. Assume that for any $\xi, V_\xi\cap X$ is a set of $OD(X)$. Then $HOD(X^{<\omega})$ is an inner model.
\end{theorem}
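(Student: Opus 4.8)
The plan is to deduce the theorem from Theorem 5.1.4 applied to the transitive class $M = HOD(X^{<\omega})$; this reduces everything to verifying the two hypotheses of that theorem. Transitivity of $M$ is immediate from the definition of $HOD$: if $a \in HOD(X^{<\omega})$ and $b \in a$, then $tcl(\{b\}) \subseteq tcl(\{a\}) \subseteq OD(X^{<\omega})$, so $b \in HOD(X^{<\omega})$. Closure of $M$ under the G\"odel operations is Lemma 5.2.5 applied to the class $X^{<\omega}$ (together with Theorem 5.2.3, which guarantees that $OD(X^{<\omega})$ itself is closed). Hence the whole content of the theorem is hypothesis $(b)$ of Theorem 5.1.4, namely that $V_\xi \cap HOD(X^{<\omega})$ is an element of $HOD(X^{<\omega})$ for every $\xi$, and this is the step I expect to be the main obstacle.

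First I would reduce this membership to the single assertion $a \in OD(X^{<\omega})$, where $a = V_\xi \cap HOD(X^{<\omega})$. The class $a$ is a set, being a subclass of the set $V_\xi$. Moreover every element of $a$ lies in $HOD(X^{<\omega})$, which is transitive, so $tcl(a) \subseteq HOD(X^{<\omega}) \subseteq OD(X^{<\omega})$; thus $tcl(\{a\}) = \{a\} \cup tcl(a) \subseteq OD(X^{<\omega})$ as soon as $a$ itself belongs to $OD(X^{<\omega})$, and the latter inclusion is exactly the statement $a \in HOD(X^{<\omega})$.

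To prove $a \in OD(X^{<\omega})$ I would exploit the explicit surjection constructed before the definition of $HOD$, namely the term-evaluation map $(f\nu)$ whose range over $ON \times X^{<\omega} \times ON$ is all of $OD(X^{<\omega})$. Through it the predicate ``$y \in OD(X^{<\omega})$'' is definable from the class $X^{<\omega}$, and therefore so is ``$y \in HOD(X^{<\omega})$'', obtained by relativizing to $tcl(\{y\})$. Consequently $a = \{y \in V_\xi : y \in HOD(X^{<\omega})\}$ is defined by a formula whose only non-logical ingredients are the ordinal $\xi$ and the class $X^{<\omega}$. Applying the generalized reflection principle from the proof of Theorem 5.1.4 to this formula, I would obtain $\zeta > \xi$ reflecting it, and then express $a$ as a value $Val(\,\cdot\,; V_\zeta \cap M)$ of the G\"odel functionals of Theorem 5.1.3, in the spirit of Lemma 5.2.2.

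The delicate point, and where the hypothesis on $X$ is used, is that the defining formula genuinely mentions the predicate $X^{<\omega}$, so its reflected version must be evaluated in the structure $(V_\zeta, \in, X^{<\omega} \cap V_\zeta)$ rather than in $(V_\zeta, \in)$ alone; to land the resulting value inside $OD(X^{<\omega})$ one needs $X^{<\omega} \cap V_\zeta$ to be itself an element of $OD(X^{<\omega})$, so that it may serve as a parameter. This is precisely what the hypothesis ``$V_\xi \cap X$ is a set of $OD(X)$'' provides: since $OD(X) \subseteq OD(X^{<\omega})$ and finite sequences of a set lying in $OD(X^{<\omega})$ again form a set in $OD(X^{<\omega})$, one gets $X^{<\omega} \cap V_\zeta \in OD(X^{<\omega})$. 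The evaluation of $a$ can then be carried out from the ordinal $\xi$ and this single $OD(X^{<\omega})$-parameter, placing $a$ in $OD(X^{<\omega})$. With hypothesis $(b)$ thereby verified, Theorem 5.1.4 gives that $HOD(X^{<\omega})$ is an inner model.
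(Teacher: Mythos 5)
Your proposal is correct and follows essentially the same route as the paper: reduce to Theorem 5.1.4, observe that only hypothesis $(b)$ is nontrivial, and obtain $V_\xi \cap HOD(X^{<\omega}) \in OD(X^{<\omega})$ by reflecting the defining formula of $HOD(X^{<\omega})$ into a structure of the form $(V_\zeta, \in\upharpoonright V_\zeta, X\cap V_\zeta)$, whose predicate part lies in $OD(X^{<\omega})$ exactly by the hypothesis on $X$, and finally pass from $OD$ to $HOD$ via $a \subseteq HOD(X^{<\omega}) \subseteq OD(X^{<\omega})$. Your version is in fact slightly more careful than the paper's on two small points --- you reflect at some $\zeta > \xi$ so as to handle an arbitrary $\xi$ rather than only reflecting ones, and you explicitly reduce $X^{<\omega}\cap V_\zeta$ to $X \cap V_\zeta$ --- but these are refinements of the same argument, not a different one.
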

\begin{proof}
The only thing to prove is that $V_\xi \cap HOD(X^{<\omega})$ is an element of $HOD(X^{<\omega}).$ $HOD(X^{<\omega})$ is defined from $X$ by a formula $\phi(v)$ (with an extra predicate for $X$). Now we pick $\xi$ such that $V_\xi$ reflects $\phi$.
\begin{center}
$a=V_\xi \cap HOD(X^{<\omega})=\{ y\in V_\xi: V_\xi\models$``$\phi(y)$''$  \}.$
\end{center}
This gives the definition $V_\xi \cap HOD(X^{<\omega})$ from the structure $V_\xi$, this structure consists of $(V_\xi, \in \upharpoonright V_\xi, V_\xi\cap X),$ hence it is in $OD(X^{<\omega}).$

Finally $a$ is a subset of $HOD(X^{<\omega}),$ and hence of $OD(X^{<\omega}).$
\end{proof}
\begin{corollary}
We can consider the inner  models:
\begin{enumerate}
\item $HOD,$
\item $HOD(\{a\}),$
\item $HOD(N),$
\item $HOD(N^{\omega}),$
\item $HOD((N\cup tcl\{a\}^{<\omega}),$
\item $HOD(N\cup P(\omega)),$
\end{enumerate}
where $N$ is an inner model.
\end{corollary}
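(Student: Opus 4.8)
The plan is to present each of the six models as $HOD(X)$ for an explicit parameter class $X$ and then invoke the preceding theorem, which asserts that $HOD(X^{<\omega})$ is an inner model whenever $V_\xi\cap X\in OD(X)$ for every ordinal $\xi$. The first step is to see that this is enough, because $HOD(X)=HOD(X^{<\omega})$ for an arbitrary class $X$. Indeed $OD(X)=OD(X^{<\omega})$: each $x\in X$ is recovered as the unique entry of the length-one sequence $\langle x\rangle\in X^{<\omega}$, while each finite sequence $\langle x_1,\dots,x_k\rangle\in X^{<\omega}$ is definable from its entries $x_1,\dots,x_k\in X$, so the two parameter families generate the same definable closure and hence the same hereditarily ordinal-definable class. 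Consequently it suffices, for each listed $X$, to verify the single hypothesis $V_\xi\cap X\in OD(X)$ (the intersection is automatically a set, being a subset of $V_\xi$), after which the theorem gives that $HOD(X^{<\omega})=HOD(X)$ is an inner model.

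For the cases with no set parameter I would argue as follows. For $HOD$ take $X=\emptyset$, so $V_\xi\cap X=\emptyset\in OD$. For $HOD(\{a\})$ take $X=\{a\}$, so $V_\xi\cap X$ is $\{a\}$ or $\emptyset$ and is definable from the parameter $a\in X$. For $HOD(N)$ take $X=N$; the key point is that $V_\xi\cap N=(V_\xi)^N$, since rank is absolute for the transitive class $N\supseteq ON$, so $V_\xi\cap N$ is in fact an \emph{element} of $N$ and hence a legitimate parameter of $OD(N)$. For $HOD(N^\omega)$ take $X=N^\omega$; here $N\subseteq OD(N^\omega)$, as each $x\in N$ is the first coordinate of the corresponding constant sequence in $N^\omega$, so $V_\xi\cap N=(V_\xi)^N$ still lies in $OD(N^\omega)$, and since every $f:\omega\to N$ of rank $<\xi$ has $\range(f)\subseteq V_\xi\cap N$, the set $V_\xi\cap N^\omega$ is definable from the single parameter $V_\xi\cap N$ together with $\xi$; being definable from $OD(N^\omega)$-parameters and a $V_\eta$, it is again in $OD(N^\omega)$.

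The remaining two are the mixed classes, and this is where I expect the real work to sit. For $HOD(N\cup tcl\{a\}^{<\omega})$ take $X=N\cup tcl(\{a\})^{<\omega}$, and for $HOD(N\cup P(\omega))$ take $X=N\cup P(\omega)$. In each case I would split $V_\xi\cap X=(V_\xi\cap N)\cup(V_\xi\cap S)$, where $S$ is the set part; the first summand is in $OD(N)\subseteq OD(X)$ by the previous paragraph, and, since $OD(X)$ is closed under the G\"odel operations and union is a composite of them, it remains to check $V_\xi\cap S\in OD(X)$. When $S=P(\omega)\subseteq V_{\omega+1}$ the intersection is $\emptyset$ or $P(\omega)$, each in $OD\subseteq OD(X)$. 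When $S=tcl(\{a\})^{<\omega}$ one first notes $a\in OD(X)$, because $\langle a\rangle\in tcl(\{a\})^{<\omega}\subseteq X$ exhibits $a$ as definable from a parameter of $X$; then $S$ is a set definable from $a$, and $V_\xi\cap S$ is definable from $S$ and $\xi$, so it too lies in $OD(X)$. The main obstacle is thus not any single case but the uniform bookkeeping for the mixed classes: keeping the inner-model contribution $V_\xi\cap N$ separate from the set contribution, recovering the base point $a$ (respectively $\omega$) as an $OD(X)$ parameter, and relying throughout on the absoluteness fact $V_\xi\cap N=(V_\xi)^N\in N$ that powers every $N$-involving case. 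With all six hypotheses verified, the preceding theorem completes the proof.
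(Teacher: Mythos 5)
Your proposal is correct and follows the same route the paper intends: the corollary is an application of the preceding theorem (Theorem on $HOD(X^{<\omega})$), and the paper's only stated justification is the list of remarks after the corollary, which reduce parameters from $X^{<\omega}$ to canonical single parameters (one element of $N$, one element of $N^\omega$, one real, etc.). Your blanket identity $OD(X)=OD(X^{<\omega})$ --- sequences are definable from their entries together with ordinals, and entries from singleton sequences --- subsumes those remarks for the purposes of the corollary (the paper's sharper single-parameter codings are really needed later, for the $AC$/$DC$ arguments via G\"odel-term codes $(t_i,\delta,a,\xi)$, not here), and your case-by-case verification of the hypothesis $V_\xi\cap X\in OD(X)$, resting on $V_\xi\cap N=(V_\xi)^N\in N$, supplies exactly the checking the paper leaves implicit. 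One small slip: for $0<\xi\leq\omega$ the set $V_\xi\cap P(\omega)$ is neither $\emptyset$ nor $P(\omega)$ (e.g.\ $V_\omega\cap P(\omega)$ is the set of finite subsets of $\omega$); but in all cases $V_\xi\cap P(\omega)=\{x\subseteq\omega: \mathrm{rank}(x)<\xi\}$ is definable from the ordinal $\xi$ alone, hence lies in $OD\subseteq OD(X)$, so your conclusion is unaffected.
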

We note that as $N$ is an inner model, then
\begin{itemize}
\item $N^{<\omega} \subseteq N.$
\item Any element from $(N^\omega)^{<\omega}$ is definable from one element of $N^\omega.$
\item If $X=N\cup tcl\{a\}^{<\omega},$ any element of $X^{<\omega}$ is definable from one element of $N$ and one element of $tc\{a\}^{<\omega}.$
\item Any element in $(N\cup P(\omega))^{<\omega}$ is defined from an element of $N$ and one single real.
\end{itemize}

\section{The axiom of choice}
\begin{theorem}
$(a)$ $HOD$ satisfies $AC$,

$(b)$ If $M$ is an inner model which satisfies $AC$, then $HOD(M)$ also satisfies $AC$.
\end{theorem}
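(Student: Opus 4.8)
The plan is to prove both parts by exhibiting a definable well-ordering of the relevant class, since the axiom of choice for a model $N$ is equivalent to the existence of a well-ordering of $N$ that is definable inside $N$. The key observation is that every element of $OD(X)$ is obtained, by Lemma~5.2.2 and the $G$\"odel-term analysis preceding Definition~5.2.5, as the value of a $G$\"odel term applied to an ordinal $\delta$, an element of the ``oracle'' $X$, and some $V_\xi$; equivalently, every element of $OD(X^{<\omega})$ arises from a triple $(\delta, a, \xi) \in \On \times X^{<\omega} \times \On$ together with the index $j$ of a $G$\"odel term. This gives a surjection from a class of such coding tuples onto $OD(X^{<\omega})$, and hence onto $HOD(X^{<\omega})$.

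For part $(a)$, I would take $X = \emptyset$, so the oracle plays no role and every element of $HOD$ is coded by a pair $(\delta, \xi)$ of ordinals together with a term index $j \in \omega$. The class $\On \times \omega \times \On$ carries a natural definable well-ordering (lexicographic on $\langle \xi, \delta, j\rangle$, say, using the canonical ordinal pairing). First I would define, for each $a \in HOD$, the least such coding tuple producing $a$, and then well-order $HOD$ by comparing least codes; this is a definable (in fact, definable without parameters) well-ordering, and crucially it is definable inside $HOD$ because the coding apparatus, the $G$\"odel terms, and the sequence $(V_\xi \cap HOD : \xi \in \On)$ are all available there by Theorem~5.2.6. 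Hence $HOD \models AC$.

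For part $(b)$, I would take $X = M$ and use the reductions noted just after Corollary~5.2.7: any element of $M^{<\omega}$ is definable from a single element of $M$ since $M^{<\omega} \subseteq M$. Now every element of $HOD(M)$ is coded by a triple $(\delta, m, \xi)$ with $m \in M$ and a term index. Since $M$ satisfies $AC$, there is a well-ordering $<_M$ of $M$ definable in $M$; I would combine $<_M$ on the $M$-coordinate with the ordinal well-ordering on $(\delta, \xi, j)$ to get a definable well-ordering of the class of coding tuples, and then well-order $HOD(M)$ by least codes exactly as in part $(a)$. The resulting well-ordering is definable inside $HOD(M)$ from the predicate for $M$ together with $<_M$.

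The main obstacle, and the step deserving the most care, is verifying that the well-ordering one writes down in $V$ is genuinely \emph{internal} to $HOD(X^{<\omega})$, rather than merely a well-ordering that happens to exist in $V$; this is what the axiom of choice in the model actually requires. The delicate point is that the assignment $a \mapsto (\text{least code of } a)$ and the evaluation of $G$\"odel terms must be carried out by formulas interpreted inside the model, which relies on $HOD(X^{<\omega})$ being an inner model closed under $G$\"odel operations with each $V_\xi \cap HOD(X^{<\omega})$ an element of it --- precisely the content of Theorem~5.2.6. I would therefore devote the bulk of the argument to checking that the coding function and the induced ordering satisfy the absoluteness needed for them to define the same well-ordering relation inside the model.
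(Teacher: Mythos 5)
Your part $(a)$ is correct and is essentially the paper's argument: every element of $HOD$ has a code $(t_i,\delta,\xi)$ consisting of a Gödel-term index and two ordinals, and taking least codes under the canonical ordinal well-ordering gives a parameter-free definable well-ordering of $HOD$. Part $(b)$, however, contains a genuine error. You assert at the outset that $AC$ for a model $N$ is equivalent to the existence of a well-ordering of $N$ definable inside $N$, and you then use this in the form ``since $M$ satisfies $AC$, there is a well-ordering $<_M$ of $M$ definable in $M$.'' Neither claim is true: $AC$ says that every \emph{set} of the model is well-orderable inside the model, which is strictly weaker than the existence of a global (class) well-ordering, let alone a definable one. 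For instance, $ZFC$ is consistent with there being no definable well-ordering of $P(\omega)$, so an inner model $M$ of $ZFC$ need not carry any definable class well-ordering; global choice is genuinely stronger than $AC$. So the $<_M$ on which your entire construction of a global well-ordering of $HOD(M)$ rests may simply not exist, and the argument collapses at exactly the point you flagged as delicate but did not resolve.

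The paper's proof of $(b)$ avoids this by arguing \emph{locally}, set by set, which is all that $AC$ requires. Given $u\in HOD(M)$, each element of $u$ has a code $(t,\delta,a,\xi)$ with $a\in M$, and by replacement the codes of elements of $u$ can be taken inside a single set $\omega\times\rho\times b\times\rho$ for some ordinal $\rho$ and some set $b\in M$. Since $M$ satisfies $AC$, there is a well-ordering of $b$ lying in $M$, and since $M\subseteq HOD(M)$ (every $x\in M$ is definable from itself as a parameter, and $M$ is transitive), that well-ordering is available inside $HOD(M)$. Combining it with the ordinal coordinates well-orders the set of codes, hence well-orders $u$, inside $HOD(M)$. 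Your proposal could be repaired by replacing the fictitious global $<_M$ with exactly this replacement-plus-local-well-ordering step; as written, though, part $(b)$ proves a stronger statement (definable global well-ordering of $HOD(M)$) from a hypothesis that does not supply it.
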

\begin{proof}
Recall that any element in $OD(X^{<\omega})$ is the final value of a G\"{o}del term on a triple $(\delta, a, \xi), \delta, \xi\in ON, a\in X^{<\omega}.$ So any element of $HOD(X^{<\omega})$ comes from a code $(t_i, \delta, a, \xi).$ If $X^{<\omega}$ is empty, this gives a way to well-order $HOD$. If $X$=inner model $M$, then given a set $u \in HOD(M)$, the set of codes $(t, \delta, a, \xi)$ form a set included in some $\omega\times \rho\times b\times \rho,$ some $b\in M, \rho\in ON,$ by replacement. This set is well-ordered and therefore $u$ is well-ordered.
\end{proof}
\begin{theorem} (Assume $AC$ holds)

$(a)$ Let $M$ be an inner model; then $HOD(M^\omega)$ satisfies dependent choice ($DC$).

$(b)$ Similarly $HOD(P(\omega))$ satisfies $DC$.
\end{theorem}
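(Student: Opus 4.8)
The plan is to exploit the explicit coding of the elements of $HOD(X^{<\omega})$ developed above, together with the closure of the parameter class under $\omega$-sequences, which is exactly the feature that separates $DC$ from full $AC$. Fix $X=M^\omega$ and set $H=HOD(M^\omega)=HOD(X^{<\omega})$. Let $S,R\in H$ with $S\neq\emptyset$ and $\forall x\in S\,\exists y\in S\,(x\mathrel{R}y)$. Since $AC$ holds in $V$, so does $DC$, hence in $V$ there is a sequence $(x_n:n<\omega)$ with $x_0\in S$ and $x_n\mathrel{R}x_{n+1}$ for all $n$. This $V$-sequence need not a priori lie in $H$; the whole content of the proof is to show that it is in fact coded inside $H$.

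Recall from the analysis of $OD(X^{<\omega})$ that every element of $H$ is a final value $Val(t_j;\delta,a,\xi)$ of a G\"odel term $t_j$ on a triple $(\delta,a,\xi)$ with $\delta,\xi\in ON$ and $a\in X^{<\omega}$; and for $X=M^\omega$ we noted that any element of $(M^\omega)^{<\omega}$ is itself definable from a single element of $M^\omega$, so we may take $a\in M^\omega$. Using $AC$ in $V$, choose for each $n$ such a code $(t_{j_n},\delta_n,a_n,\xi_n)$ for $x_n$. The key step is to bundle these $\omega$-many codes into a \emph{single} parameter from $X$. Fix a $V$-definable pairing bijection $\omega\times\omega\to\omega$, which lies in the inner model $M$. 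Since $ON\subseteq M$ and $\omega\subseteq M$, each of the sequences $(j_n)_n$, $(\delta_n)_n$, $(\xi_n)_n$ is a function $\omega\to M$, hence an element of $M^\omega$; and $(a_n)_n$, an element of $(M^\omega)^\omega\cong M^{\omega\times\omega}\cong M^\omega$, is likewise an element of $M^\omega$. Combining them yields one $b\in M^\omega=X$ from which each $t_{j_n},\delta_n,a_n,\xi_n$ is recovered by a definable decoding.

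It now follows that $\vec{x}=(x_n:n<\omega)$ satisfies $x_n=Val(t_{j_n};\delta_n,a_n,\xi_n)$, so $\vec{x}$ is definable from the single parameter $b\in X\subseteq X^{<\omega}$, the evaluation of G\"odel terms being a definable functional; thus $\vec{x}\in OD(X^{<\omega})$. As each $x_n\in H$, we have $tcl(\{x_n\})\subseteq OD(X^{<\omega})$, and since $OD(X^{<\omega})$ is closed under G\"odel operations the ordered pairs $(n,x_n)$ and their transitive closures remain inside it; hence $tcl(\{\vec{x}\})\subseteq OD(X^{<\omega})$ and $\vec{x}\in H$. Because $H$ is an inner model (transitive, modelling $ZF$) and $R\in H$, the relation $x_n\mathrel{R}x_{n+1}$ is absolute and holds in $H$, so $\vec{x}\in H$ witnesses this instance of $DC$, proving $(a)$. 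For $(b)$ one replaces $M^\omega$ by $P(\omega)$: an $\omega$-sequence of reals codes to a single real via the same pairing, so $(P(\omega))^\omega\cong P(\omega)$ and elements of $(P(\omega))^{<\omega}$ are definable from a single real, and the identical argument goes through.

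The main obstacle, and indeed the only real content, is the bundling step. For full $AC$ one would have to well-order $X$ inside $H$, which fails; for $DC$ only $\omega$ many choices are required, and the "parameters" of those choices can be absorbed into one element of $X$ precisely because $X$ is closed under $\omega$-sequences in $V$. The care needed is in verifying that the bundled object genuinely belongs to $X$, i.e.\ that $ON^\omega$, $\omega^\omega$ and $(M^\omega)^\omega$ all embed into $M^\omega$ via a coding available in $V$; this is where the structure of the parameter class is essentially used.
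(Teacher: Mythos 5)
Your argument for part (a) is correct and is essentially the paper's coding argument: since $M$ is an inner model with $ON\subseteq M$, the sequences $(j_n)_n$, $(\delta_n)_n$, $(\xi_n)_n$ and $(a_n)_n$ genuinely are elements of $M^\omega$ (modulo a definable pairing), so the full code sequence bundles into a single parameter $b\in M^\omega$, the witnessing sequence is $OD$ from $b$, and your transitive-closure check correctly places it in $HOD(M^\omega)$. Note, however, that the paper elects to prove only part (b) explicitly, and that is exactly the case where your argument has a gap.

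The gap is the final claim that for (b) ``the identical argument goes through.'' It does not: a code for an element of $HOD(P(\omega))$ is a quadruple $(i,\xi,b,\delta)$ with $i\in\omega$, $b\in P(\omega)$ and $\xi,\delta$ \emph{arbitrary ordinals}. Your bundling handles $(i_n)_n$ and $(b_n)_n$, which do code into a single real, but $(\xi_n)_n$ and $(\delta_n)_n$ lie in $ON^\omega$, which cannot embed into $P(\omega)$ ($ON^\omega$ is a proper class, $P(\omega)$ a set); and the parameter class $P(\omega)^{<\omega}$ admits only finitely many ordinal parameters per definition (absorbable into a single ordinal $\omega^{\xi_1}+\cdots+\omega^{\xi_p}$, as the paper notes, but only finitely many). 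Nor can you argue that the $V$-sequence of ordinal components produced by $AC$ automatically lies in $HOD(P(\omega))$: in general an $\omega$-sequence of ordinals need not be ordinal-definable from any real, and this is precisely the obstruction that makes $DC$ in $HOD(P(\omega))$ nontrivial. The paper's proof supplies the missing device: after coding $\langle b_n : n<\omega\rangle$ into a single real $\beta$, it \emph{discards} the ordinal and integer components chosen by $AC$ and redefines them recursively, at each step taking the \emph{first possible} (least) code, with real component $b_n$, whose value continues the $E$-chain and admits an infinite continuation. This canonical minimization makes the revised code sequence definable from $\beta$ alone, so the witnessing sequence lands in $HOD(P(\omega))$. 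Your proof of (b) needs this canonical-choice step (or an equivalent) in place of bundling, which is available only when the parameter class absorbs $ON^\omega$, as it does in part (a).
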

\begin{proof}
We only prove the second statement. We know that every element in $HOD(P(\omega))$ is the final value of a G\"{o}del term $t_i$ at some triple $(\delta, a, \xi), a\in P(\omega)^{<\omega}.$ Now any element of $P(\omega)^{<\omega}$ is coded by a single element of $P(\omega).$ Hence every element of $HOD(P(\omega))$ becomes a code which is a quadruple $(i, \xi, b, \delta),$ where $i\in \omega, \xi,\delta\in ON, b\in P(\omega).$ We now consider a binary relation $E$ on a set $X$, both lying in $HOD(P(\omega))$ such that
$\forall x\in X \exists y\in X, yEx$ holds. By applying choice we get a sequence $\langle (i_n, \xi_n, b_n, \delta_n): n<\omega   \rangle$ such that $\forall n, x_{n+1}Ex_n.$

Now $\langle b_n: n<\omega \rangle$ is coded by a single real $\beta$ of $P(\omega)$. We finally revise the definition of $i_n, \xi_n, \delta_n$ so as to obtain a new sequence  $\langle (i'_n, \xi'_n, b_n, \delta'_n): n<\omega   \rangle$, each time we take the first possible choice that allows an infinite sequence following what was built before. This gives the required sequence for $DC$.
\end{proof}

\section{Independence of $AC$} We force with conditions $p$ that are functions with finite domain from $\omega\times \omega$ to $2=\{0,1\}.$ This is a countable set of conditions, so that the cardinals are preserved in the generic extension $V[G]$.

The generic set $G$ defines a function $g:\omega\times \omega \rightarrow \{0,1\}$. For each $n, g_n$ is a subset of $\omega$ defined by
\begin{center}
$\{m: g(n, m)=1 \}.$
\end{center}
We let $a$ be $\{g_n: n<\omega     \}.$ It is easily seen that the $g_n$'s are distinct.
\begin{theorem}
In the model $M=(HOD(V \cup tcl\{a\}^{<\omega}))^{V[G]},$ the set $a$ is infinite and has no countable subset.
\end{theorem}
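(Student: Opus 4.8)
The plan is to prove the theorem in the Cohen-symmetric-model style, exploiting the homogeneity of the forcing and the invariance of the inner model $M = \left(HOD(V \cup \operatorname{tcl}\{a\}^{<\omega})\right)^{V[G]}$ under permutations of the ``columns'' $\omega \times \omega$. First I would observe that $a = \{g_n : n < \omega\}$ is infinite in $M$: the $g_n$ are distinct subsets of $\omega$ coded in $V[G]$ and $a \in M$ since each $g_n$ and the map $n \mapsto g_n$ is definable from $a$ itself (which lies in $\operatorname{tcl}\{a\}$) and the ground model, so $a$ is genuinely infinite.

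The heart is showing $a$ has no countable subset in $M$. I would argue by contradiction: suppose $b \in M$ with $b \subseteq a$ and $b$ countably infinite, so there is a surjection (or injection) witnessing $b = \{g_{n_k} : k < \omega\}$ for some sequence $(n_k : k < \omega) \in M$. By Theorem 5.3.1-style reasoning, every element of $M$ is the final value of a G\"odel term applied to a triple $(\delta, c, \xi)$ with $c \in (V \cup \operatorname{tcl}\{a\}^{<\omega})^{<\omega}$; crucially the ``non-ordinal'' data reduces to finitely many elements of $V$ together with a finite tuple from $\operatorname{tcl}\{a\}$. The key point is that such a finite tuple from $\operatorname{tcl}\{a\}$ mentions only finitely many of the sets $g_n$ explicitly, so the enumeration $(n_k)$ is definable from $V$-data plus finitely many specified columns. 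I would then choose some $n_* < \omega$ which is one of the values $n_k$ in the enumerated countable subset but which is \emph{not} among the finitely many columns named in the defining parameters.

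The main obstacle, and the decisive step, is a symmetry argument: I would consider an automorphism $\pi$ of the forcing $\operatorname{Add}(\omega, \omega\times\omega)$ induced by a permutation of the index set $\omega$ that fixes all columns appearing in the finite parameter tuple but moves $n_*$ to some other column $n_{**}$ not yet in the range of the enumeration. Such $\pi$ fixes the ground model $V$ pointwise, fixes the name for $a$ as an unordered set (since permuting columns merely permutes the $g_n$ among themselves), and hence fixes the canonical name $\lusim{b}$ and the definable enumeration it arises from, because the defining formula and its parameters are all $\pi$-invariant. But $\pi$ would then have to send the statement ``$g_{n_*} \in b$'' to ``$g_{\pi(n_*)} \in b$'', forcing $g_{n_{**}} \in b$ as well, and iterating produces infinitely many distinct $g_n \in b$ that are interchangeable, contradicting that $b$ was a \emph{specific} countable set with a definable enumeration. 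Making this precise requires the symmetry lemma for forcing (that $p \Vdash \phi(\lusim{\tau})$ iff $\pi(p) \Vdash \phi(\pi(\lusim{\tau}))$) together with a density/homogeneity argument showing that no condition can decide membership of the unnamed columns in $b$ asymmetrically; I expect verifying this invariance of the $HOD$-definition under the column permutations to be the delicate part, since one must check that the extra predicate for $V \cup \operatorname{tcl}\{a\}^{<\omega}$ is genuinely preserved by $\pi$.
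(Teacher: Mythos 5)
Your overall strategy is the paper's own: reduce the parameters of a supposed element of $M$ to ordinals, an element of $V$, the set $a$, and a finite tuple from $tcl\{a\}$ mentioning only finitely many columns $g_n$; then swap an unnamed column by an automorphism of $\Add(\omega,\omega\times\omega)$ and invoke the symmetry lemma together with the $\pi$-invariance of the name of $a$ as an unordered set. But your concluding step fails. From ``$\pi$ fixes $\lusim{b}$ and sends $g_{n_*}\in b$ to $g_{n_{**}}\in b$'' you conclude, by iterating, that $b$ contains infinitely many interchangeable $g_n$'s and declare this contradictory. It is not: a countable $b\subseteq a$ may perfectly well contain cofinitely many of the $g_n$ (indeed $a$ itself is countable in $V[G]$), so swelling $b$ contradicts nothing. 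The theorem's content is precisely that no \emph{enumerating function} $f:\omega\to a$ exists in $M$, so the contradiction must be pinned on the function, not on the extensional size of $b$; trying to finish by saying $b$ would become ``all of $a$ minus a finite set, hence $a$ countable in $M$'' is circular, since that is what is being proved. The paper's punchline is different and is the step you are missing: choosing $k$ with $f(k)=g_k$ outside the parameter tuple, the $\pi$-invariant definition $\Phi(l,\vec{\xi},v,a,\chi)$ of $g_k$ is evaluated in the \emph{single} model $V[G']=V[\pi[G']]$, where by symmetry it defines both $g_k$ and $g_{k'}$; hence $g_k=g_{k'}$, contradicting the (generic) distinctness of the sections. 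That is, the invariant definition forces two distinct columns to be equal -- an honest contradiction that your membership-based version never reaches.

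Two further points. First, your requirement that $n_{**}$ lie outside the range of the enumeration may be unsatisfiable (the range of the indices could be cofinite in $\omega$) and is unnecessary; the paper asks only that $g_{k'}$ avoid the finite parameter tuple $u$ and -- crucially, a condition you omit -- that no pair $(k',i)$ lie in the domain of the condition $p_0$ forcing the definition. This last clause is what makes $p_0$ and $\pi(p_0)$ compatible, so that one can fix a single generic $G'$ below a common extension and run the two-models-are-one argument; without it the symmetry lemma gives you statements forced by incompatible conditions and no contradiction. Second, your opening claim that ``the map $n\mapsto g_n$ is definable from $a$'' is false, and dangerously so: if it were true, $M$ would contain an enumeration of $a$ and the theorem would fail. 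Membership $a\in M$ needs no such map: every element of $tcl\{a\}$ lies in $V\cup tcl\{a\}^{<\omega}$, hence in $OD(V\cup tcl\{a\}^{<\omega})$, so $a\in HOD(V\cup tcl\{a\}^{<\omega})$ directly; infinitude of $a$ in $M$ then follows from upward absoluteness of finiteness and the density argument showing the $g_n$ are pairwise distinct. (Likewise, the index sequence $(n_k)$ you posit need not belong to $M$ -- only the values $f(k)$ do -- though this slip is harmless to the argument.)
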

\begin{proof}
If the set $a$ was finite, it would be finite in $V[G]$ as well. Now if a function $f:\omega \rightarrow a$ is in the inner model $M$, it is definable from ordinals, a member of $V$, $a$ and an element of $a^{<\omega}$ say $u$.

We consider the first $k$ such that $f(k)$ is not in the range of $u$. This gives a $g_k$ definable from ordinals, on element $v$ of $V$, the set $a$ itself and an element of $a^{<\omega}.$ Actually the last element can be described by the function $\chi$ of $\omega^{<\omega}$ by labeling the $g_n$'s by their indices $n$. We pick a formula $\Phi$ that
\begin{center}
$\forall l \in \omega (l\in g_k \Leftrightarrow \Phi(l, \xi_1, ..., \xi_n, v, a, \chi)).$
\end{center}
If $\tau$ is a name for $a$ and $\sigma$ a name for $g_k$, then The following is forced by some condition $p_0$ in $G$:
\\
$(*)$ \hspace{2.5cm} $p_0\Vdash$``$\forall l\in \omega (l\in \lusim{g_k} \Leftrightarrow \Phi(l, \xi_1, ..., \xi_n, v, a, \chi)).$

We pick $k'$ such that $g_{k'}$ is not in the range of $u$, $k\neq k'$ and no integer $(k', i)$ appears in the domain of $p_0.$ An automorphism of
the set of forcing conditions is defined by exchanging $k$ and $k',$ formally
\begin{center}
 $\pi(p)(l,i)= \left\{
\begin{array}{l}
        p(k', i) \hspace{3.3cm} \text{ if } l=k,\\
        p(k,i) \hspace{3.4cm} \text{ if } l=k',\\
        p(l,i) \hspace{3.5cm} \text{ if }$ otherwise.$
     \end{array} \right.$
\end{center}
We note that $\pi(p_0)$ is compatible with $p_0$ and we can pick $p\leq p_0, \pi(p_0).$  Fix $G'$, generic so that $p\in G'$ and consider the models $V[G']$ and $V[\pi[G']].$  Because $(*)$ is forced, $g_k$ receives a definition in $V[G']$ through $\Phi, a, \chi.$ Now $V[G']$ and $V[\pi[G']]$ are the same generic model: only the order of the $g_n$'s differ. But in $V[\pi[G']]$ the $k$-th section is actually $g_{k'},$ so $g_{k'}= g_k$ and we get a contradiction.
\end{proof}

\end{document}